\documentclass[11pt]{article}
\usepackage{amsmath,amsthm,amsfonts,amssymb,mathrsfs,bm}
\usepackage{amsmath,amsthm,amsfonts,amssymb,bm,wasysym}
\usepackage{epsfig}
\usepackage[usenames]{color}
\usepackage{verbatim}
\usepackage{hyperref}
\usepackage{multicol}
\usepackage{graphicx}
\usepackage{comment}
\usepackage{float}
\usepackage{color}
\usepackage{enumerate}
\usepackage[normalem]{ulem}

\usepackage[color=green, textsize=tiny]{todonotes}


\topmargin 0in
\oddsidemargin .01in
\textwidth 6.5in
\textheight 9in
\evensidemargin 1in
\addtolength{\voffset}{-.6in}
\addtolength{\textheight}{0.22in}
\parskip \medskipamount
\parindent      0pt


\newtheorem{theorem}{Theorem}[section]
\newtheorem{definition}[theorem]{Definition}

\newtheorem{lemma}[theorem]{Lemma}
\newtheorem{proposition}[theorem]{Proposition}
\newtheorem{corollary}[theorem]{Corollary}
\newtheorem{remark}[theorem]{Remark}

\newtheorem{claim}[theorem]{Claim}

\numberwithin{equation}{section}

\def\N{\mathbb{N}}
\def\Z{\mathbb{Z}}

\def\bP{\mathbb{P}}
\def\bE{\mathbb{E}}
\def\F{\mathcal{F}}

\newcommand{\cpc}[1]{\mathrm{BCap}(#1)}

\def\cU{\mathcal{U}}
\def\cT{\mathcal{T}}
\def\cS{\mathcal{S}}

\def\cF{\mathcal{F}}

\def\cC{\mathcal{C}}

\def\cTp{\mathcal{T}_-}

\def\cTc{\mathcal{T}_c}

\def\reff#1{(\ref{#1})}

\renewcommand{\phi}{\varphi}
\renewcommand{\epsilon}{\varepsilon}

\allowdisplaybreaks

\newcommand{\1}{{\text{\Large $\mathfrak 1$}}}

\newcommand{\til}{\widetilde}

\newcommand{\pr}[1]{\mathbb{P}\!\left(#1\right)}
\newcommand{\E}[1]{\mathbb{E}\!\left[#1\right]}
\newcommand{\estart}[2]{\mathbb{E}_{#2}\!\left[#1\right]}
\newcommand{\prstart}[2]{\mathbb{P}_{#2}\!\left(#1\right)}
\newcommand{\prcond}[3]{\mathbb{P}_{#3}\!\left(#1\;\middle\vert\;#2\right)}

\def\bs{\backslash}
\newcommand{\norm}[1]{\left\| #1 \right\|}

\newcommand{\red}[1]{{\color{red}{#1}}}

\newcommand{\diam}[1]{{\rm{diam}}(#1)}

\newcommand{\musb}{\mu_{\mathrm{sb}}}
\newcommand{\offp}{k_{\mathrm{p}}}
\newcommand{\offf}{k_{\mathrm{f}}}

\begin{document}

\title{\bf Local times and capacity for transient branching random walks}

\author{Amine Asselah \thanks{
Universit\'e Paris-Est, LAMA, UMR 8050, UPEC, UPEMLV, CNRS, F-94010 Cr\'eteil; amine.asselah@u-pec.fr} \and
Bruno Schapira\thanks{Aix-Marseille Universit\'e, CNRS, I2M, UMR 7373, 13453 Marseille, France;  bruno.schapira@univ-amu.fr} \and Perla Sousi\thanks{University of Cambridge, Cambridge, UK;   p.sousi@statslab.cam.ac.uk} 
}
\date{}
\maketitle

\begin{abstract}  
We consider branching random walks on the Euclidean lattice in dimensions five and higher.  
In this {\it non-Markovian} setting, we first obtain a relationship between the equilibrium 
measure and Green's function, in the form of an approximate last passage decomposition. 
Secondly, we obtain exponential moment bounds for functionals of the branching random walk, under optimal condition. 
As a corollary we obtain an approximate variational characterisation of the branching capacity. We finally derive upper bounds involving the branching capacity for the tail of the time spent in an arbitrary finite collection of balls. This generalises the results of~\cite{AHJ} and~\cite{AS22} for~$d\geq 5$. For random walks, the analogous tail estimates have been instrumental 
tools for tackling deviations problems on the range, related to folding of the walk.   
\newline
\newline
\emph{Keywords and phrases.} Branching random walk, capacity, local times, equilibrium measure. 
\newline
MSC 2010 \emph{subject classifications.} Primary  60G50; 60J80.
\end{abstract}

\red{}

\section{Introduction}

In this paper we study branching random walks (BRW), also called tree-indexed random walks, on $\Z^d$ with $d\geq 5$. To define this process, we need two sources of randomness. First we sample a random spatial rooted tree and next we attach i.i.d.\ random variables to the edges of the tree. The branching random walk is then obtained by assigning to each node of the tree the sum of all the variables associated to the edges along the unique geodesic path from the root to that node. 

To be more precise, for a general ordered rooted tree, we denote the root by $\emptyset$ and the parent of a vertex $u\neq \emptyset$ by $u^-$.
Given $x\in \mathbb Z^d$, a  
random walk indexed by a (possibly random) rooted tree~$T$ starting from $x$ is a set of random variables $\{S^x_u\}_{u\in T}$ indexed by the vertices of $T$ with values in $\mathbb Z^d$, which is 
such that, given $T$, $S_\emptyset^x= x$, and the set of 
 increments $\{S_u^x-S_{u^-}^x\}_{u\in T\setminus\{\emptyset\}}$, forms a family of independent and identically distributed random variables. When $x=0$, we sometimes drop it from the notation. 

To keep our analysis simpler, we assume in the whole paper that the joint distribution of the increments of our tree-indexed walks is given by the uniform measure on nearest neighbours of the origin.
We note that all our proofs and results would adapt to centred finitely supported distributions.

We denote by $T^x$ the range of the random walk indexed by $T$ starting from $x$, i.e.\  
$$T^x = \{S_u^x : u\in T\}.$$

In our work we consider two types of random rooted spatial trees: a critical Bienaym\'e-Galton-Watson tree and an \emph{invariant} infinite tree that we now define. 

Let $\mu$ be an offspring distribution with mean $1$ and positive finite variance $\sigma^2$. We write $\musb$ for the size biased distribution of $\mu$, i.e.\ $\musb(i)=i\mu(i)$ for $i\in \N$. 
\begin{definition}
\rm{
Let $\cT$ be an infinite ordered and rooted tree constructed as follows: 
\begin{itemize}
	\item The root produces $i$ offspring with probability $\mu(i-1)$ for every $i\ge 1$. The first offspring of the root is \emph{special}, while the others if they exist are \emph{normal}. 
	\item Special vertices produce offspring independently according to $\musb$, while normal vertices produce offspring independently according to $\mu$. 
	\item Each special vertex produces exactly one special vertex chosen uniformly at random among its children, while the other children are normal.
\end{itemize}
Note that if we forget the spatial structure of $\cT$, we obtain a prominent example of an invariant one-ended tree introduced by Aldous in~\cite{Ald91}, which appears as the local limit as $n\to \infty$ of a 
Galton-Watson tree with offspring distribution $\mu$
conditioned on having $n$ vertices, and rooted at a uniformly chosen vertex, see again~\cite{Ald91}.

By construction $\cT$ has a unique infinite path stemming from the root that we call the \emph{spine}. We assign label $0$ to the root. We assign positive labels to the vertices on the side of the spine reached clockwise from the root according to a depth-first search from the root and negative labels to the other ones according to depth-first search from infinity as depicted in the first tree of Figure~\ref{fig:labelling}.
 We call the vertices with negative labels (including the spine vertices) the \emph{past} of $\cT$ and denote them by~$\cT_-$, while the vertices with non-negative labels are in the 
\emph{future} of $\cT$ and we denote them by~$\cT_+$. 
Note that the root does not have any offspring in the past of~$\cT$.
}	
\end{definition}

\begin{figure}[ht!]\label{fig:labelling}
	\begin{center}
		\includegraphics[width=0.31\textwidth]{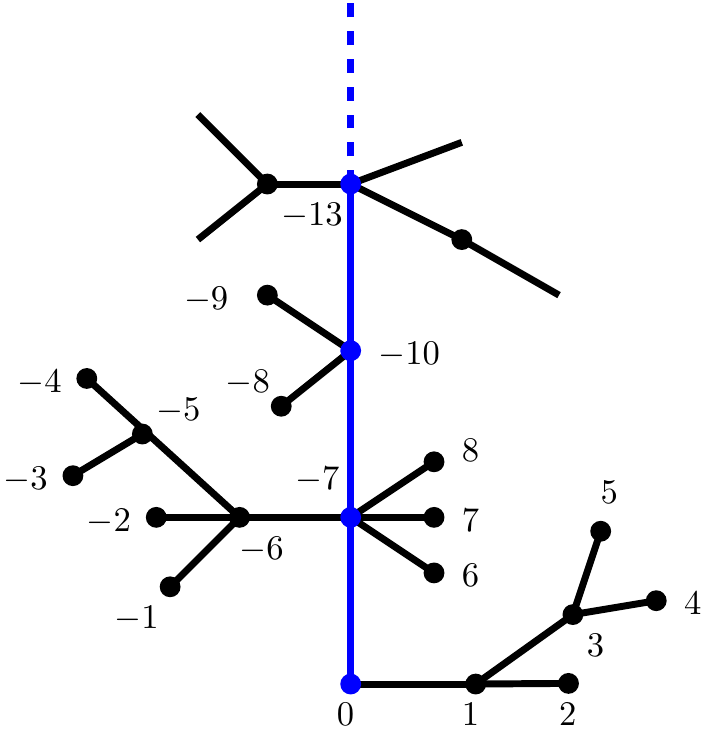}\hspace{0.02\textwidth}
		\includegraphics[width=0.31\textwidth]{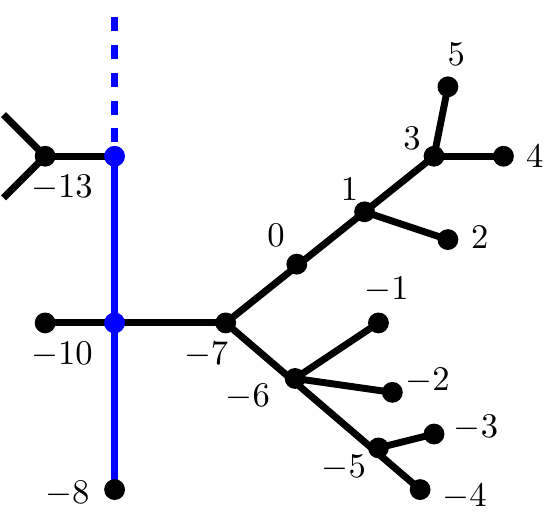}\hspace{0.02\textwidth}
	\includegraphics[width=0.31\textwidth]{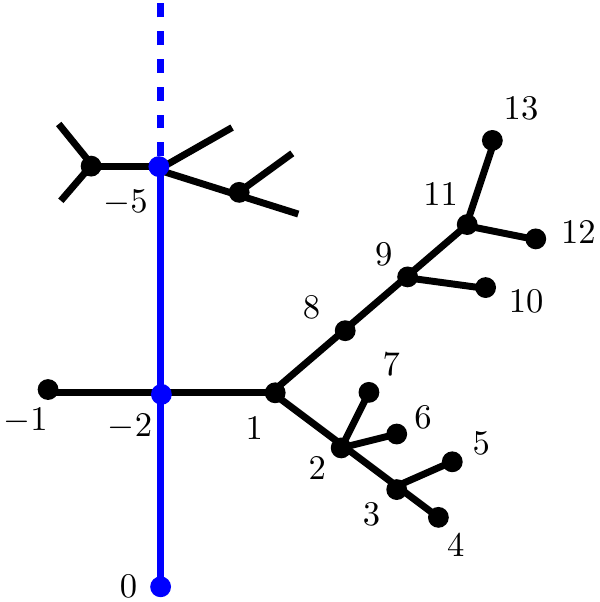}
	\end{center}
	\caption{An infinite tree rooted at $0$, seen from $-8$ and relabelled.}
\end{figure}

We denote by $\cT_c$ a Bienaym\'e-Galton Watson tree with offspring distribution $\mu$. 
The tree $\cT_c$ 
is almost surely finite, but conditioned on being large, say having
$n$ nodes, it has of the order of~$\sqrt n$ generation and the set of positions, $\cT^x_c$, typically fills a ball
of volume~$n^{d/4}$. The cases $d\ge 5$ are called by physicists the {\it upper-critical space dimensions}, when
considering the BRW. For probabilists, when~$d\ge 5$ the critical BRW is {\it transient} in the sense that a BRW, conditioned on having size~$n$, visits the origin a finite number of times independent of $n$, and the expected number of visits of a site (the so-called Green's function) is well defined. Dimension four is the critical dimension for BRW, and there the number of visits to the origin grows logarithmically (in the size of the critical tree). We mention here the works of Le Gall and Lin \cite{LGL1,LGL2} in the transient dimensions that obtained  laws of
large numbers for the volume of the set of visited sites, when conditioning the BRW to have $n$ nodes,
as $n$ goes to infinity. The far-reaching idea behind the law of large numbers is introducing an infinite
labelled tree invariant under a shift of the labelling as shown in Figure~\ref{fig:labelling}, and its corresponding re-rooting of the tree. On such an invariant
object, such as $\cT_-$, ergodic theory, yields laws of large numbers. The genealogy of
the invariant tree looks like a comb whose teeth are independent critical trees, with respective
volumes being independent heavy tailed variables 
(since by Kolmogorov's estimate $\bP(|\cT_c|> t)\asymp \frac{1}{\sqrt t}$). This allowed
Le Gall and Lin to retrieve information on the critical tree from the infinite invariant ones. Then, building on
their beautiful observations, Zhu \cite{Zhu1} defines, in $d\ge 5$, the branching capacity of a set, and links it to
the probability that a critical BRW (indexed by~$\cT_c$) or an infinite BRW (indexed by $\cT_-$) hits the set, properly normalised. With our notation, 
Zhu's key results read as follows.
\begin{equation}\label{intro-1}
\lim_{\|x\|\to\infty} \frac{\bP(\cT_c^x\text{ hits } K)}{g(x)}\, =\, \cpc{K}\, =\,
\lim_{\|x\|\to\infty} \frac{\bP(\cT^x_-\text{ hits } K)}{G(x)},
\end{equation}
where $g$ (respectively $G$) is the Green's function for the critical tree $\cT_c^x$ (resp.\ for the infinite tree~$\cT_-^x$).
In other words,
\[
g(x)=\E{\sum_{u\in \cT_c} \1(S_u^0=x)},\quad \text{and}\quad 
G(x)= \E{\sum_{u\in \cT_-} \1(S_u^0=x)}.
\]
Note that by criticality of the tree, the function $g$ is the same as the Green's function for simple random walk.

Moreover, \cite{Zhu1} gives a dual definition of branching capacity in terms of escape probabilities:
\begin{equation}\label{intro-2}
\cpc{K}=\sum_{x\in K} \bP(\cT_-^x\cap  K=\emptyset).
\end{equation}
In view of these findings, a transient branching random walk seems to resemble a transient random walk. What about a last passage decomposition? Let us recall what it amounts to for a simple
random walk, denoted here by $(X_n)_{n\ge 0}$. When starting at $x$, we denote its law by $\bP_x$, and 
for any subset $K\subset \Z^d$, we denote by $H_K=\inf\{n\ge 0 : X_n\in K\}$ 
and $H_K^+ = \inf\{n\ge 1: X_n\in K\}$, the first hitting and the first return time to $K$ respectively.
Then we have a last passage decomposition 
\begin{equation}\label{intro-3}
\bP_x(H_K<\infty) =\sum_{y\in K} \sum_{n\in \N}\bP_x\big(X_n=y,\ \{X_{n+k}: k\ge 1\} \text{ avoids } K\big).
\end{equation}
We next use the Markov property and invariance by time shift to get, for any $x\in \Z^d$
\begin{equation}\label{intro-4}
\bP_x(H_K<\infty)=\sum_{y\in K} g(x-y)\cdot \bP_y(H_K^+=\infty).
\end{equation}
For a tree this Markov property fails and this is the main source of difficulty in this setting.
The fundamental formula \reff{intro-4} is at the heart of the potential theory for random walks,
and can also be thought of as a relation between the function $x\mapsto \mathbb P_x(H_K<\infty)$, which equals
one on $K$ and is harmonic outside $K$, the Green's function $g$ and the equilibrium measure.
Such an exact formula in the context of branching random walks is missing due to the lack of Markov property, 
and so far it has prevented the development 
of a satisfactory potential theory for branching random walks, despite the series of works by Zhu~\cite{Zhu1,Zhu2,Zhu18,Zhu4, Zhu20,Zhu3} that laid its foundations.  
The main goal of this paper is to establish a relation between the Green's function and the equilibrium 
measure similar to \reff{intro-4}. As a main application we are able to establish an \textit{approximate variational characterisation}, see Corollary~\ref{thm:variationalchar}. 

Our second result is obtained independently of the approximate last passage decomposition and provides exponential moment bounds on certain functionals of the BRW for both the infinite tree and the critical one. 
Using these results, we estimate the probability the BRW spends a large time
in each of the balls making up a domain $\Lambda$, in terms of the branching capacity 
of $\Lambda$, in an analogous form as for the simple random walk. 

We then give two other corollaries of our
results:  (i) Euclidean balls are sets of minimal branching capacity (up to constant), given their volume, and 
(ii) in each finite set $\Lambda\subset \Z^d$, there is a subset whose branching capacity and volume are of
order the branching capacity of $\Lambda$, which were instrumental for solving large deviations problems on the range, see below. 

Before stating precisely our results, we recall the definitions of equilibrium measure and branching capacity, which were introduced by Zhu in~\cite{Zhu1}. We assume from now on and until the end of the paper that $d\ge 5$. 
\begin{definition}
	\rm{
	Let $K$ be a finite subset of $\mathbb Z^d$. The {\bf equilibrium measure} of $K$ is the measure defined for $x\in \mathbb Z^d$, by 
	\[ 
	e_K(x) = \1(x\in K)\cdot  \mathbb P(\cT_-^x\cap K=\emptyset).
	\]
	The {\bf branching capacity} of $K$ is defined to be 
	\[
	\cpc{K}=\sum_{x\in K}e_K(x).
	\]
	}
\end{definition}
Our first result provides an {\it approximate last passage decomposition}. 

\begin{theorem} \label{theo.potentiel}
\rm{	There exist positive constants $c$ and $C$, so that for any finite set $K\subseteq \Z^d$, we have
	\begin{equation}\label{Upper.theo}
	\|G e_K\|_\infty = \max_{x\in \mathbb Z^d}\, Ge_K(x):= \max_{x\in \mathbb Z^d} \sum_{y\in K} G(x,y) e_K(y) \leq C, 
	\end{equation}
	and if $K$ is nonempty, 
	\begin{equation}\label{Lower.theo}
	\min_{x\in K}\, Ge_K(x) \ge c. 
	\end{equation}
	}
\end{theorem}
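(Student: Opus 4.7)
The plan is to establish, up to universal multiplicative constants, the approximate identity
\[
Ge_K(x) \;\asymp\; \mathbb{P}\bigl(\cT_-^x \text{ hits } K\bigr),
\]
from which both \eqref{Upper.theo} and \eqref{Lower.theo} follow: the hitting probability is trivially at most $1$, and, when $x\in K$, bounded below by a universal $c_0>0$ (with positive probability one of the first critical trees attached to the spine of $\cT_-^x$ returns to $x$). The starting observation is the representation
\[
Ge_K(x) \;=\; \mathbb{E}\!\left[\sum_{u \in \cT_-} e_K(S_u^x)\right],
\]
in which $e_K(S_u^x)=\mathbb{P}(\cT_-^{S_u^x}\cap K=\emptyset)$ is the conditional probability that an \emph{additional, independent} past-tree planted at position $S_u^x$ avoids $K$. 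The strategy is to realise this independent copy as a substructure of $\cT_-^x$ itself via Aldous's re-rooting invariance.

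For the upper bound \eqref{Upper.theo}, I would use that $\cT_-$ is one-ended and exploit the spine. Every $u\in\cT_-$ has a unique infinite path $\gamma_u$ to infinity, obtained by concatenating the path from $u$ up to its first spine ancestor with the spine thereafter. Re-rooting $\cT$ at $u$ preserves the law, so the past in the new rooting has the distribution of $\cT_-^{S_u^x}$. I would identify a substructure $\Sigma_u\subseteq \cT_-^x$, built from vertices reached from $u$ along $\gamma_u$ together with the critical trees attached on a chosen side, whose law matches that of $\cT_-^{S_u^x}$ up to universal constants. Since the $\Sigma_u$'s are nested along each branch, the events $\{S_u^x\in K,\ \Sigma_u\cap K=\emptyset\}$ are almost pairwise disjoint --- only the visit $u$ closest to the root of $\cT$ along its branch can satisfy both. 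Summing yields $Ge_K(x)\le C\,\mathbb{P}(\cT_-^x\text{ hits }K)\le C$.

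For the lower bound \eqref{Lower.theo}, I would run the same picture in reverse at $x\in K$: on the event that $\cT_-^x$ does hit $K$, the visit $u$ closest to the root on its branch satisfies $\Sigma_u\cap K=\emptyset$ by definition and its spatial position lies in $K$, contributing a term of order $e_K(S_u^x)\ge c$ to the sum, which gives $Ge_K(x)\ge c\,\mathbb{P}(\cT_-^x\text{ hits }K)\ge c\,c_0$.

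The main obstacle is that Aldous's re-rooting invariance is exact only for \emph{abstract} trees, whereas $\cT_-^x$ carries spatial labels in $\Z^d$: the past $\cT_-^{(u)}$ of $\cT$ re-rooted at a generic $u$ is a subset of the full spatial tree $\cT^x$, not of $\cT_-^x$ alone, and typically contains pieces of the original $\cT_+^x$ together with a finite correction near the original root. Controlling this discrepancy --- so that $\Sigma_u\subseteq \cT_-^x$ inherits the law of $\cT_-^{S_u^x}$ up to a universal multiplicative factor in both directions, and so that the disjointness across different branches $\gamma_u$ only costs a constant --- is the central technical content of the proof, and is where one must combine fine estimates on both Green functions $g$ and $G$ with the branching structure of $\cT$ in $d\ge 5$.
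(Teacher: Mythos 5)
There is a genuine gap: the step you explicitly defer (``controlling this discrepancy \dots is the central technical content of the proof'') is in fact the whole theorem, and nothing in your sketch makes progress on it. Writing $Ge_K(x)=\mathbb{E}\bigl[\sum_{u\in\cT_-}e_K(S_u^x)\bigr]$ and disjointifying over ``last visits'' is the easy half. Indeed, if you use the label order rather than branches, the bookkeeping is exact: for $u\in\cT_-$ with label $-n$, the past of the tree re-rooted at $u$ is precisely the set of vertices with labels $<-n$, hence a subset of $\cT_-^x$ (your worry about pieces of $\cT_+^x$ intruding does not arise for $u\in\cT_-$), and the events $\{S^x_u\in K,\ \text{labels}<-n\ \text{avoid}\ K\}$ are \emph{exactly} pairwise disjoint, with union $\{\cT_-^x\cap K\neq\emptyset\}$. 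What you then need, in both directions, is the approximate factorisation
\[
\mathbb{P}\bigl(\cT^x(-n)=y,\ \{\text{labels}<-n\}\cap K=\emptyset\bigr)\ \asymp\ \mathbb{P}\bigl(\cT^x(-n)=y\bigr)\, e_K(y),
\]
with constants uniform in $n$, $y$, $K$. This is precisely the non-Markovian obstruction the paper isolates: the re-rooted past is not independent of the event $\{\cT^x(-n)=y\}$, since the two share the spine and the joint offspring law $\mu(i+j+1)$ of past/future children at each spine vertex; shift invariance gives you the correct \emph{marginal} law of the re-rooted past but says nothing about this conditional decoupling. Your proposal asserts the comparability ``up to universal constants'' without any argument, so the proof is not complete. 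Two further local problems: the claimed near-disjointness ``along each branch'' does not suffice (the number of branches visiting $K$ is unbounded, so per-branch uniqueness alone does not give $\sum_u\mathbb{P}(\cdot)\le C\,\mathbb{P}(\mathrm{hit})$), and in the lower-bound sketch the assertion $e_K(S_u^x)\ge c$ is false as stated — the equilibrium measure at a point of $K$ can be arbitrarily small (think of a point deep inside a large ball).

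It is also worth noting that the paper does \emph{not} prove a pointwise factorisation of this kind, and its structure suggests you should not expect one to come cheaply. For the upper bound the paper decomposes $K$ into dyadic shells $K_i$, uses the exact last-exit identity through a sphere for the spine (Proposition~\ref{pro:lastpassagedec}), conditions on the spine to decouple the adjoint trees in past and future, and — crucially — extracts the screening factor $\exp\bigl(-c\sum_{m>i}\cpc{K_m}/R_m^{d-4}\bigr)$ (Lemma~\ref{lem:avoidprob}) before summing; a bound of the naive form $e_K(K_i)\lesssim\cpc{K_i}$ would not sum to a constant. For the lower bound, a shell-wise inequality $e_K(B_i)\gtrsim\cpc{K_i}$ is simply false in general because of this same screening, which is why Section~\ref{sec.Lowerbound} introduces the truncation index $I^*$ and proves Proposition~\ref{prop-invisible} only for $i\ge I^*+1$. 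Your plan, if it worked as stated, would yield $Ge_K(x)\asymp\mathbb{P}(\cT_-^x\cap K\neq\emptyset)$ uniformly — a statement at least as strong as the theorem — so the deferred decoupling cannot be treated as a technical afterthought; it is where all the work lies, and as written your argument does not contain it.
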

One application of this result is an approximate variational characterisation of the branching capacity, 
which solves an open question of~\cite{Zhu20}. 
\begin{corollary}\label{thm:variationalchar}
\rm{	There exist positive constants $C_1,C_2$, such that for 
any finite nonempty set $K\subseteq \Z^d$, 
\begin{equation}\label{capacity-energy}
	\frac{C_1}{\cpc{K}} \le  \inf\Big\{ \sum_{x,y\in K} G(x-y) \, \nu(x)\nu(y) : \nu \textrm{ probability measure on }K\Big\} \le  \frac{C_2}{ \cpc{K}}.
	\end{equation}
	}
\end{corollary}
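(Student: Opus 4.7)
The plan is to deduce both bounds in \reff{capacity-energy} by testing the infimum against the natural candidate $\nu_0 = e_K/\cpc{K}$, invoking Theorem~\ref{theo.potentiel} together with a Cauchy-Schwarz inequality for the symmetric bilinear form $\langle \mu_1, \mu_2\rangle_G := \sum_{x,y} G(x-y)\,\mu_1(x)\mu_2(y)$.

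For the upper bound I substitute $\nu_0$, which is a probability measure on $K$. By symmetry of $G$, the energy rewrites as $\cpc{K}^{-2}\sum_{x\in K} e_K(x)\,Ge_K(x)$; combining the pointwise estimate $Ge_K \le C$ from \reff{Upper.theo} with $\sum_{x\in K} e_K(x) = \cpc{K}$ yields the upper bound $C/\cpc{K}$, so $C_2 = C$ works.

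For the lower bound, fix an arbitrary probability measure $\nu$ on $K$ and pair it with $e_K$. The cross-energy equals $\langle \nu, e_K\rangle_G = \sum_{x\in K} \nu(x)\,Ge_K(x)$ and is at least $c$ by \reff{Lower.theo}. Applying Cauchy-Schwarz to the pair $(\nu, e_K)$, together with the bound $\langle e_K, e_K\rangle_G \le C\cpc{K}$ already established in the upper-bound computation, I obtain $c^2 \le \langle \nu, \nu\rangle_G \cdot C\cpc{K}$, so $\langle \nu, \nu\rangle_G \ge c^2/(C\cpc{K})$ and $C_1 = c^2/C$ works.

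The only step requiring care is the Cauchy-Schwarz inequality, which demands that $\langle \cdot, \cdot\rangle_G$ be positive semi-definite, i.e.\ $\widehat G(\xi) \ge 0$. I plan to verify this via a Fourier computation based on the spine decomposition of $\cT_-$: the spine alone contributes $\widehat g(\xi) = 1/(1-\phi(\xi))$ (the Fourier transform of the critical tree/SRW Green's function, where $\phi$ is the real characteristic function of the symmetric step distribution), while the i.i.d.\ critical Galton-Watson subtrees hanging off the spine vertices contribute a non-negative multiple of $\phi(\xi)^2\widehat g(\xi)^2$. The resulting expression $\widehat G(\xi) = \widehat g(\xi)\bigl(1 + c_\sigma \phi(\xi)^2 \widehat g(\xi)\bigr)$ for some $c_\sigma \ge 0$ is manifestly non-negative, because each factor is. Once this PSD property is granted, the rest of the proof is immediate from Theorem~\ref{theo.potentiel}.
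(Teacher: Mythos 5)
Your overall strategy --- testing with $\widehat e_K=e_K/\cpc{K}$ for the upper bound, and pairing an arbitrary $\nu$ with $e_K$ via Cauchy--Schwarz together with the two halves of Theorem~\ref{theo.potentiel} for the lower bound --- is exactly the paper's second proof of Corollary~\ref{thm:variationalchar} in Section~\ref{sec.cor}. The gap is precisely in the step you flagged: $G$ is in general \emph{not} positive semi-definite, and your Fourier identity is wrong. Since the root is not a vertex of $\cT_-$, the spine contributes the Green's function with its initial visit removed; the exact identity in the first line of~\eqref{convol.G} gives, with $\phi(\xi)=\tfrac1d\sum_{i}\cos\xi_i$,
\[
\widehat G(\xi)\;=\;\Big(\tfrac{1}{1-\phi(\xi)}-1\Big)\Big(1+\tfrac{\sigma^2}{2}\big(\tfrac{1}{1-\phi(\xi)}-1\big)\Big)\;=\;\frac{\phi(\xi)}{1-\phi(\xi)}+\frac{\sigma^2}{2}\cdot\frac{\phi(\xi)^2}{(1-\phi(\xi))^2},
\]
and not $\widehat g\,(1+c_\sigma\phi^2\widehat g)$: the spine term is $\phi\widehat g$, which changes sign. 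Near $\xi=(\pi,\dots,\pi)$ one has $\phi\approx-1$, so $\widehat G\approx-\tfrac12+\tfrac{\sigma^2}{8}$, which is negative whenever $\sigma^2<4$ (e.g.\ critical binary branching, $\sigma^2=1$); testing against $f(x)=(-1)^{x_1+\cdots+x_d}h(x)$ for a suitable bump $h$ (after approximating $g$ by killed Green's functions to justify Parseval) shows that $\langle f,f\rangle_G$ can indeed be negative. So Cauchy--Schwarz cannot be applied to the form built from $G$ itself, and the lower-bound half of your argument does not close as written.

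The repair is minor and is what the paper does: replace $G$ by $\overline G(x,y)=\sum_{z}g(x,z)g(z,y)$, which is manifestly positive semi-definite, since $\langle f,f\rangle_{\overline G}=\sum_z\big(\sum_x g(x,z)f(x)\big)^2\ge 0$, and is comparable to $G$ pointwise by~\eqref{convol.G} together with~\eqref{green.rw} and~\eqref{Green.asymp}. Because the only measures entering your argument ($\nu$ and $e_K$) are nonnegative, all three quantities $\langle\nu,\nu\rangle$, $\langle\nu,e_K\rangle$, $\langle e_K,e_K\rangle$ change only by multiplicative constants when $G$ is replaced by $\overline G$, and then your two displays (the cross term bounded below by~\eqref{Lower.theo}, the energies bounded above by~\eqref{Upper.theo}) go through verbatim. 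Alternatively, your Fourier route can be salvaged by noting that $\widehat G(\xi)+1=\tfrac{1}{1-\phi(\xi)}+\tfrac{\sigma^2}{2}\tfrac{\phi(\xi)^2}{(1-\phi(\xi))^2}\ge 0$, i.e.\ the kernel $G+\delta_0$ is positive semi-definite and pointwise comparable to $G$ since $G(0)\asymp 1$; but as stated, both your formula for $\widehat G$ and the conclusion that $G$ itself is PSD are incorrect, so the proof needs this correction.
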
 
We shall give two proofs of this corollary. One, based on the method of~\cite{BPP95}, which only uses~\eqref{Upper.theo}, but requires a finite third moment on $\mu$, and a second one which needs both~\eqref{Upper.theo} and~\eqref{Lower.theo}, but only requires a finite second moment on $\mu$.


Our second main result is an exponential moment bound for functionals of the infinite BRW, which can be thought of as an extension of Kac's moment formula 
to a non-Markovian setting, see e.g.~\cite[Proposition 2.9]{S12}, and is obtained independently of Theorem~\ref{theo.potentiel}. 

\begin{theorem}\label{theo.expmoment}
\rm{Assume that $\mu$ has a finite exponential moment. There exists $\kappa>0$, such that for any map $\varphi: \mathbb Z^d \to [0,\infty)$, satisfying 
$\|G\varphi\|_\infty \le \kappa$, one has 
$$\E{ \exp\Big(\sum_{u\in \cT} \varphi(S_u) \Big) } \le 2. $$ 
}
\end{theorem}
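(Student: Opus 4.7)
I would establish the bound by expanding the exponential as a power series and controlling the moments $M_k := \E{\big(\sum_{u\in\cT}\varphi(S_u)\big)^k}$.  The target is a bound of the form $M_k \le k!\,(C\kappa)^k$ with $\kappa = \|G\varphi\|_\infty$ and $C$ a universal constant.  Choosing $\kappa \le 1/(2C)$ then yields
\[
\E{\exp\Big(\sum_{u\in\cT}\varphi(S_u)\Big)} \;=\; \sum_{k\ge 0} \frac{M_k}{k!} \;\le\; \sum_{k\ge 0} (C\kappa)^k \;\le\; 2.
\]

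\textbf{Moment bound via first-generation recursion.}  The key step is to prove the moment bound inductively.  I would start by establishing the analogue on the critical tree,
\[
R_k(x) := \estart{\Big(\sum_{u\in\cT_c}\varphi(S_u^x)\Big)^k}{} \;\le\; k!\,(C_0\kappa)^{k-1}\, g\varphi(x),
\]
by induction on $k$, using the first-generation decomposition
\[
\sum_{u\in\cT_c}\varphi(S_u^x) \;=\; \varphi(x) + \sum_{i=1}^N Z_i,
\]
where $N \sim \mu$ and, conditional on $N$, the $Z_i$ are independent copies of $\sum_{u\in\cT_c}\varphi(S_u^{x+\xi_i})$ at independent uniform neighbours $\xi_i$ of the origin.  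Multinomial expansion of $R_k$ and extraction of the single linear term $\bar R_k(x) := (2d)^{-1}\sum_{|y-x|=1}R_k(y)$ produces a discrete Poisson equation $R_k - \bar R_k = F_k$, in which $F_k$ collects $\varphi(x)$-terms, factorial moments of $\mu$, and products $\prod_i \bar R_{l_i}(x)$ with $\sum_i l_i = k$ and each $l_i < k$.  Inverting by the critical Green's function one gets $R_k = g \ast F_k$, and the inductive hypothesis combined with $g\varphi \le \kappa$ closes the induction.  The extension to the infinite tree $\cT$ then proceeds through the spine decomposition: conditional on the spine, the attached subtrees are independent critical Bienaym\'e-Galton-Watson trees, and the preceding estimate controls each of their contributions, while a Kac-type computation along the spine walk assembles with these subtree factors into the appropriate iterated Green's function convolutions, in the form that matches the hypothesis $\|G\varphi\|_\infty \le \kappa$.

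\textbf{Main difficulty.}  The main obstacle is the combinatorial bookkeeping in the inductive step: one must verify that the multinomial coefficients, the factorial moments of $\mu$ (and of the size-biased law $\musb$ appearing at spine vertices), and the iterated Green's function convolutions all multiply to give exactly $k!\,(C\kappa)^k$ rather than something worse such as $(k!)^2 C^k$.  The finite exponential moment of $\mu$ enters precisely here: it furnishes bounds of the form $\E{\binom{N}{j}} \le A^j\, j!$ on the factorial moments, which exactly match the $k!$ scale produced by the multinomial expansion.  Without such a growth condition on $\mu$, the factorial moments would overwhelm the geometric decay coming from $\kappa$ and the induction would fail to close.
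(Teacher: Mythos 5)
Your overall architecture coincides with the paper's: a moment bound for the critical tree that retains the spatial factor $g\varphi(x)$ (the paper's Lemma~\ref{lem:boundsonkmoment}), then a spine decomposition in which the sums over spine positions are closed using $\sum_x g(z,x)\,g\varphi(x)\asymp G\varphi(z)\lesssim \kappa$ (Lemma~\ref{lem.moment.infinite}), with an intermediate step for the adjoint ($\til\mu$-rooted) subtrees and a Cauchy--Schwarz step to combine past and future. Where you differ is the critical-tree induction itself: you decompose at the first generation and invert a Poisson equation, while the paper decomposes at the most recent common ancestor of the $k$ vertices, which produces the Green's function factor $g(x-y)$ directly and avoids any inversion. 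As stated, however, your induction does not close. The hypothesis $R_k(x)\le k!\,(C_0\kappa)^{k-1}g\varphi(x)$ is not self-improving: already the contribution to $F_k$ from the $k$ indices splitting between exactly two subtrees of the root yields, after inserting the hypothesis and convolving with $g$, a bound of order $\sum_{m_1+m_2=k}\binom{k}{m_1}m_1!\,m_2!\,(C_0\kappa)^{k-2}\,\kappa\,g\varphi(x)\asymp (k-1)\,k!\,(C_0\kappa)^{k-2}\,\kappa\, g\varphi(x)$, which exceeds the target by a factor of order $k$, and no choice of $C_0$ absorbs this. The cure is to strengthen the hypothesis to $C^{k-1}((k-2)\vee 1)!\,g\varphi(x)$ and to use a combinatorial estimate of the form $\sum_{n_1+\cdots+n_L=k}\prod_i\frac{1}{n_i((n_i-1)\vee 1)}\le C_1^L/k^2$, which is exactly the paper's Claim~\ref{cl:induction}. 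Relatedly, your factorial-moment input $\E{\binom{N}{j}}\le A^j\,j!$ is the wrong one: with that extra $j!$, the terms in which $j\asymp k$ children each receive a single index give a bound of size $(k!)^2(A\kappa)^{k-1}$, so $\sum_k M_k/k!$ would not be summable for any $\kappa>0$. What a finite exponential moment of $\mu$ actually provides, and what the argument needs, is the geometric bound $\E{\binom{N}{j}}\le M\lambda^{-j}$, with no $j!$.

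There is also an analytic gap in ``the inductive hypothesis combined with $g\varphi\le\kappa$ closes the induction''. For the terms with $k_0=0$ and at least two subtrees hit, you must dominate $\sum_y g(x,y)\big(g\varphi(y)\big)^2$ by a constant times $\kappa\, g\varphi(x)$; bounding one factor by $\|g\varphi\|_\infty\lesssim\kappa$ only gives $\kappa\,(g\ast g\varphi)(x)\asymp\kappa\,G\varphi(x)$, which is \emph{not} $O(g\varphi(x))$. The correct step is the paper's Claim~\ref{cl:boundongphisquare}, which relies on $\sum_y g(x-y)g(y-z)g(y-z')\lesssim g(x-z)G(x-z')+g(x-z)G(z-z')$ and then uses $\|G\varphi\|_\infty\le\kappa$; thus the branching Green's function hypothesis is needed already at the level of the critical tree, not only in the spine assembly. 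Finally, a smaller point: the inversion $R_k=g\ast F_k$ requires a justification excluding a harmonic remainder (for instance, run the recursion for the walk truncated at generation $m$ and let $m\to\infty$); the paper's MRCA decomposition sidesteps this issue altogether.
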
 
Theorem~\ref{theo.expmoment} is useful if we can find a {\it potential} $\varphi$ with 
$\|G\varphi\|_\infty \le \kappa$, and Theorem~\ref{theo.potentiel} provides such a family of functions:
we build them from the equilibrium potential $e_K$, with appropriate $K$.

Our proof of Theorem~\ref{theo.expmoment} uses a moment method, which bears similarities with the one in~\cite{AHJ}. However, while \cite{AHJ} handles any dimension, including the intricate dimension four, it restricts its analysis to the case when $\varphi$ is supported on a single point. Our approach is a priori restricted to dimensions five and higher but we are able to consider any function satisfying  $\|G\varphi\|_\infty \le \kappa$, which is ideally suited to study covering of a given domain of space by BRW, and our recursion method is elementary.

When combined together our two main results have a number of interesting consequences. A first immediate application is a large deviations upper bound for the time spent on a collection of balls. For $x\in \mathbb Z^d$, and $r>0$, write $B(x,r)$ for the Euclidean closed ball of radius $r$ centered at $x$ (intersected with $\mathbb Z^d$), and for a subset $\cC\subset \mathbb Z^d$, define $B(\cC,r)=\cup_{x\in \cC} B(x,r)$.  

For $x,y\in \mathbb Z^d$, we define 
$$
\ell_{\cT^x}(y) = \sum_{u\in \cT} \1(S^x_u = y),$$ 
and for $A\subseteq \mathbb Z^d$, we write $\ell_{\cT^x}(A) = \sum_{y\in A} \ell_{\cT^x}(y)$. 

 \begin{corollary}\label{cor.loctimeball}
\rm{Assume that $\mu$ has a finite exponential moment. Then there exists $\kappa>0$, such that for any $t>0$, any $r\ge 1$, and any finite set $\cC\subset \mathbb Z^d$, 
\begin{equation}\label{ineq-cor-localtime}
\mathbb P\Big(\ell_{\cT^0}(B(x,r)) > t, \text{ for all }x\in \cC\Big) \le 2 \exp\Big( -\kappa \cdot \frac{t}{r^d} \cdot \cpc{B(\cC,r)} \Big).
\end{equation}
 } 
\end{corollary}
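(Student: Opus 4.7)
The plan is to construct a nonnegative potential $\varphi:\Z^d\to[0,\infty)$ such that (i) $\|G\varphi\|_\infty \le \kappa$ for the $\kappa$ of Theorem~\ref{theo.expmoment}, and (ii) on the event $E$ in~\eqref{ineq-cor-localtime},
\[
\sum_{u\in\cT}\varphi(S_u^0) \;\ge\; c\, t\, \cpc{B(\cC,r)}/r^d.
\]
Corollary~\ref{cor.loctimeball} then follows by combining Theorem~\ref{theo.expmoment} with Markov's inequality.

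Set $A:=B(\cC,r)$. First apply a Vitali-type covering argument to the family $\{B(x,r)\}_{x\in\cC}$ to extract a subfamily $\cC'\subseteq\cC$ such that $\{B(z,r)\}_{z\in\cC'}$ are pairwise disjoint while $\{B(z,3r)\}_{z\in\cC'}$ still covers $A$. Pick any partition $\{P_z\}_{z\in\cC'}$ of $A$ with $P_z\subseteq B(z,3r)$, set the weights $w_z:=e_A(P_z)$ so that $\sum_{z\in\cC'}w_z=e_A(A)=\cpc{A}$, and define
\[
\varphi(y) \;:=\; \frac{\lambda}{r^d}\sum_{z\in\cC'} w_z\,\1_{B(z,r)}(y),
\]
for some $\lambda>0$ to be tuned. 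Property (ii) is then immediate from the disjointness of $\{B(z,r)\}_{z\in\cC'}$ and the inclusion $\cC'\subseteq\cC$: on $E$ one has $\sum_u \varphi(S_u^0) = (\lambda/r^d)\sum_{z\in\cC'} w_z\,\ell_{\cT^0}(B(z,r)) \ge \lambda\, t\,\cpc{A}/r^d$.

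The hard part is (i). Letting $\bar G(x,z):=r^{-d}\sum_{y\in B(z,r)}G(x,y)$, one has $G\varphi(x)=\lambda\sum_z w_z\,\bar G(x,z)$, and using the standard asymptotics $G(\xi)\asymp|\xi|^{-(d-4)}$ for the Green's function of the past tree, $\bar G(x,z)\le C(|x-z|+r)^{-(d-4)}$. Since $|x-y|+r\asymp|x-z|+r$ for $y\in P_z\subseteq B(z,3r)$, this yields
\[
G\varphi(x)\;\le\;C\lambda\sum_{y\in A}\frac{e_A(y)}{(|x-y|+r)^{d-4}}.
\]
Split this sum by $|x-y|\gtrless r$. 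The far part is bounded by $C\lambda\, Ge_A(x)\le C'\lambda$ via Theorem~\ref{theo.potentiel}. The near part equals $\lambda r^{-(d-4)}\,e_A(A\cap B(x,r))$, and the local estimate $e_A(A\cap B(x,r))\le C'' r^{d-4}$ follows from the standard lower bound $G(x,y)\ge c\, r^{-(d-4)}$ on $B(x,r)$ combined again with Theorem~\ref{theo.potentiel}. Hence $\|G\varphi\|_\infty\le C'''\lambda$, and choosing $\lambda:=\kappa/C'''$ delivers (i).

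The main obstacle is (i): one cannot directly invoke $\|Ge_A\|_\infty\le C$ from Theorem~\ref{theo.potentiel}, because $\varphi$ is effectively a smoothed version of $e_A$ rather than $e_A$ itself. Controlling the smoothing error near the diagonal requires the auxiliary local bound $e_A(A\cap B(x,r))\le C r^{d-4}$, itself deduced from Theorem~\ref{theo.potentiel} via the Green's function lower bound at scale $r$; the rest of the argument is bookkeeping.
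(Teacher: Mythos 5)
Your proof is correct and takes essentially the same approach as the paper's: a potential obtained by smearing equilibrium mass uniformly at scale $r^{-d}$ over the $r$-balls of a $2r$-separated subfamily, the key bound $\|G\varphi\|_\infty\lesssim 1$ deduced from~\eqref{Upper.theo} via a far/near split (your smoothing estimate is a rephrasing of Lemma~\ref{lem:boundonge}), and then Theorem~\ref{theo.expmoment} combined with Chernoff/Markov. The only real deviation is a mild streamlining: by taking weights $e_{B(\cC,r)}(P_z)$ from a partition subordinate to the $3r$-balls you get $\cpc{B(\cC,r)}$ in the exponent directly, bypassing the comparison step via Lemmas~\ref{lem.double} and~\ref{lem-shrink}, and your near-diagonal bound $e_{B(\cC,r)}(B(x,r))\lesssim r^{d-4}$ is derived once more from~\eqref{Upper.theo} rather than from the ball-capacity estimate.
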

\begin{remark}\label{rem-localtimes}
\emph{Note that since $\cT$ contains $\cT_c$ as subtree, the same result holds as well with $\cT_c^0$ instead of $\cT^0$ (and this remark applies to Theorem~\ref{theo.expmoment} as well).  }
\end{remark}
Also, it was proved in~\cite{Zhu1} that the branching capacity of a 
ball of radius $r$ is of order $r^{d-4}$, thus in the case of one ball we recover the results of~\cite{AHJ, AS22}.
We note that the case of dimension four would require a completely different strategy, and is open (except for one ball). The question of obtaining a corresponding lower bound is also interesting, but even in the case of a single random walk, which form the lower bound should take is not clear to us.
 
Let us mention that in the setting of a simple random walk the argument used here for proving Corollary~\ref{cor.loctimeball} provides an alternative and more direct approach for Theorem 1.2 from~\cite{AS23a}, see also Remark~\ref{rem.balls.rw}. 

We present now a result which has proved useful in studying the folding of a random walk in large deviations problems on the range, as in~\cite{AS23a}, 
and also played an important role in the context of random interlacements~\cite{S21,S23}. 
\begin{corollary}\label{cor-subset}
\rm{There exists $\alpha>0$, such that for any $r\ge 1$ and any finite $\mathcal C\subset \Z^d$, there is
a subset~$U\subseteq \mathcal C$, satisfying 
\begin{equation}\label{ineq-cor}
(i)\quad\cpc{\cup_{x\in U} B(x,r)}\ge \alpha\cdot  r^{d-4} |U|\quad\text{and}\quad
(ii)\quad r^{d-4} |U|\ge\alpha\cdot \cpc{\cup_{x\in \mathcal C} B(x,r)}.
\end{equation}
}
\end{corollary}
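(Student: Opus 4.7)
My plan is to build $U$ by a greedy selection of centres with large marginal branching capacity gain, and then verify both conditions using the approximate last-passage decomposition (Theorem~\ref{theo.potentiel}) and the variational characterisation (Corollary~\ref{thm:variationalchar}). Let $K:=\bigcup_{x\in\cC}B(x,r)$ and fix a small constant $c_0>0$ to be chosen later. Starting from $U_0=\emptyset$, at step $j\ge 1$, among all $x\in \cC\setminus U_{j-1}$ I select one maximising the marginal gain
\[
\Delta_j(x)\;:=\;\cpc{K_{U_{j-1}}\cup B(x,r)} - \cpc{K_{U_{j-1}}},
\]
and add it to $U_{j-1}$ provided $\Delta_j(x)\ge c_0\, r^{d-4}$; otherwise I stop and set $U:=U_{j-1}$.

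Condition~(i) follows immediately by telescoping: $\cpc{K_U}=\sum_{j}\Delta_j(x_j)\ge c_0\,r^{d-4}|U|$. For condition~(ii), note that by subadditivity (a direct consequence of the escape probability formula~\reff{intro-2}) combined with $\cpc{B(x,r)}\le C_0\, r^{d-4}$ (cf.~\cite{Zhu1}), one has $\cpc{K_U}\le C_0\, r^{d-4}|U|$, so that it suffices to establish a matching lower bound $\cpc{K_U}\gtrsim \cpc{K}$. For this, I would use the stopping rule together with the variational characterisation: given a near-optimal probability $\nu^*$ on $K$ with $\mathcal E(\nu^*)\asymp 1/\cpc{K}$ (from Corollary~\ref{thm:variationalchar}), I would build a probability $\nu$ on $K_U$ by \emph{sweeping} $\nu^*$ onto $K_U$, that is, redistributing the mass that $\nu^*$ places on each $B(x,r)\setminus K_U$, for $x\in\cC\setminus U$, onto $K_U$ via branching-random-walk hitting distributions from $B(x,r)$. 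The stopping condition, combined with the upper bound $\|G e_{K_U}\|_\infty\le C$ from Theorem~\ref{theo.potentiel} applied to $K_U$, should control the extra energy introduced by this sweep so that $\mathcal E(\nu)\lesssim 1/\cpc{K}$, and another application of Corollary~\ref{thm:variationalchar} then yields $\cpc{K_U}\gtrsim 1/\mathcal E(\nu)\gtrsim \cpc{K}$.

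The main obstacle is the sweep step. In the random walk setting, the exact last-passage formula~\reff{intro-4} permits a clean mass transfer, as exploited in~\cite{AS23a}; here only the approximate Theorem~\ref{theo.potentiel} is available, so the sweep must be implemented quantitatively, using both inequalities in Theorem~\ref{theo.potentiel} and a delicate bookkeeping of how the greedy stopping condition constrains the residual equilibrium mass on each unselected ball $B(x,r)$, $x\in \cC\setminus U$. This is precisely where the non-Markov character of branching random walks surfaces, and tight control of the stopping condition versus this residual mass will likely occupy the bulk of the technical work; the choice of the constant $c_0$ will then be made so that the combined constants in the sweep estimate produce a strict contraction.
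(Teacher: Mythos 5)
Your step (i) is fine (telescoping plus monotonicity of branching capacity), but step (ii) -- the inequality $\cpc{\cup_{x\in U}B(x,r)}\gtrsim \cpc{\cup_{x\in\cC}B(x,r)}$ -- is where all the difficulty of the statement sits, and your proposal does not prove it: the ``sweep'' is only announced, and you yourself note that implementing it would occupy the bulk of the work. The gap is genuine, not a routine omission. In the random walk setting, balayage rests on the exact last passage identity \eqref{intro-4} (i.e.\ on the Markov property), which produces a transferred measure whose potential, hence energy, is dominated. For branching random walk no such identity exists; Theorem~\ref{theo.potentiel} only gives the two one-sided bounds \eqref{Upper.theo} and \eqref{Lower.theo} on $Ge_K$, and neither supplies a way to push the mass of $\nu^*$ from an unselected ball onto $K_U$ without a possibly large increase of energy. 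Nor is it clear that the greedy stopping rule even forces $\cpc{K_U}\gtrsim\cpc{K}$: to exploit ``every remaining marginal gain is $<c_0r^{d-4}$'' one would need something like strong subadditivity (submodularity) of branching capacity, which is neither proved in the paper nor invoked anywhere, and which, even if granted, would only yield $\cpc{K}-\cpc{K_U}\le c_0r^{d-4}\,|\cC\setminus U|$ -- useless when $\cC$ is dense. So as written the proposal establishes (i) but leaves (ii) unproved.

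The paper's proof avoids any balayage by randomizing the selection: each $x\in\cC$ is retained independently with probability proportional to $r^{4-d}\sum_{y\in\partial B(x,r)}\bP\big(\cT_-^y\cap B(\cC,r)=\emptyset\big)$, i.e.\ to the equilibrium mass of the \emph{big} set near $x$. A first and second moment computation shows $|B(\cU,r)|\asymp r^4\,\cpc{B(\cC,r)}$ with probability at least $3/4$, which gives (ii); and (i) follows from the lower bound in the variational characterisation (Corollary~\ref{thm:variationalchar}) applied to the measure spread over $\cup_{x\in\cU}\partial B(x,r)$, once the expected Green energy of that measure is bounded by $r^{d+2}\,\bE[|\cU|]$ -- the off-diagonal terms being controlled exactly by \eqref{Upper.theo}, since the retention probabilities are built from $e_{B(\cC,r)}$. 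This is the substitute for the sweep: the equilibrium measure enters through the sampling probabilities rather than through a mass transfer, and the two estimates hold simultaneously with positive probability, producing the desired $U$. If you want to salvage a deterministic greedy construction you would need an additional structural input (an energy-nonincreasing balayage or a submodularity property for $\mathrm{BCap}$) that is currently unavailable in this non-Markovian setting.
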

Another application of Theorem~\ref{theo.expmoment} is the following 
general upper deviations bound for the time spent in an {\it arbitrary} set. Define for $x\in \mathbb Z^d$, and $\Lambda\subset \mathbb Z^d$, $G(x,\Lambda) = \sum_{y\in \Lambda} G(x,y)$. 
\begin{corollary}\label{cor.locset}
\rm{Assume that $\mu$ has a finite exponential moment. Then there exists $c>0$, such that for any $t>0$ and any finite nonempty set $\Lambda\subset \mathbb Z^d$, one has 
$$\mathbb P(\ell_{\cT^0}(\Lambda) > t)  \le 2 \exp\left(-c \cdot\frac{t}{\sup_{x\in \Z^d} G(x,\Lambda)}\right). $$ 
}
\end{corollary}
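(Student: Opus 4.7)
The plan is to apply Theorem~\ref{theo.expmoment} to a properly scaled indicator of $\Lambda$, followed by a Chebyshev/Markov exponential argument. Set $M := \sup_{x\in\Z^d} G(x,\Lambda)$, which is finite since $\Lambda$ is finite, and define the potential
$$\varphi(y) := \frac{\kappa}{M}\, \mathbf{1}(y \in \Lambda),$$
where $\kappa > 0$ is the constant from Theorem~\ref{theo.expmoment}. Then for every $x\in \Z^d$,
$$G\varphi(x) = \sum_{y\in\Lambda} G(x,y)\,\frac{\kappa}{M} = \frac{\kappa}{M}\, G(x,\Lambda) \le \kappa,$$
so $\|G\varphi\|_\infty \le \kappa$ and Theorem~\ref{theo.expmoment} applies.

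By definition of $\ell_{\cT^0}(\Lambda)$ we have $\sum_{u\in\cT}\varphi(S_u) = (\kappa/M)\,\ell_{\cT^0}(\Lambda)$, so Theorem~\ref{theo.expmoment} yields
$$\E{\exp\!\left(\frac{\kappa}{M}\,\ell_{\cT^0}(\Lambda)\right)} \le 2.$$
An application of Markov's inequality then gives, for any $t>0$,
$$\mathbb{P}\bigl(\ell_{\cT^0}(\Lambda) > t\bigr) \le 2\exp\!\left(-\frac{\kappa\, t}{M}\right),$$
which is exactly the claimed bound with $c = \kappa$.

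There is no real obstacle here: the whole content is in the identification of $M = \sup_x G(x,\Lambda)$ as the natural normalisation that keeps $G\varphi$ bounded by $\kappa$ when $\varphi$ is a uniform multiple of $\mathbf{1}_\Lambda$. One small remark worth recording is that taking $\varphi$ proportional to $\mathbf{1}_\Lambda$ (rather than some more clever choice) is essentially forced if we want the exponential to reproduce $\ell_{\cT^0}(\Lambda)$; any sharper shape of $\varphi$ would yield a weighted local time instead of $\ell_{\cT^0}(\Lambda)$. The hypothesis that $\mu$ has a finite exponential moment is used only through Theorem~\ref{theo.expmoment}.
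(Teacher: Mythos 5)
Your proof is correct and follows essentially the same route as the paper: choose $\varphi$ proportional to $\mathbf{1}_\Lambda$ normalised by $\sup_{x}G(x,\Lambda)$ so that $\|G\varphi\|_\infty\le\kappa$, then apply Theorem~\ref{theo.expmoment} and the exponential Markov (Chernoff) inequality. No gaps.
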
 
Note that $G(x,\Lambda)\le C|\Lambda|^{4/d}$, for some universal constant $C$ which does not depend on $\Lambda$ nor on $x\in \mathbb Z^d$, thus in the above corollary, one could as well replace the denominator in the exponential by $|\Lambda|^{4/d}$. However, the slightly more general form presented here can be useful in some situations when e.g. the set $\Lambda$ has a small local density, as in~\cite{AS21,AS23b}.

In the standard framework of random walks, Corollaries~\ref{cor.loctimeball}, \ref{cor-subset} and~\ref{cor.locset} were instrumental in studying the moderate deviations of the range in \cite{AS17,AS21,AS23a}, and for solving a long-standing conjecture of Khanin, Mazel, Schlossman and Sinai~\cite{KMSS94} concerning the large deviations for the intersection of two independent ranges in~\cite{AS23b}. We expect that our tools will permit analogous questions 
to be handled in upper-critical dimensions for BRW.


For completeness and to emphasise the analogy with a single random walk we provide two other variational characterisations. 
\begin{corollary}\label{cor:variation}
\rm{There exist positive constants $C_3,C_4$, such that for any finite set $K\subset \mathbb Z^d$, 
\begin{equation}\label{caract.2}
C_3\cdot \cpc{K} \le  \sup  \Big\{\sum_{x\in K} \varphi(x) : \varphi\ge 0 \text{ on }K, \, \max_{x\in K} G\varphi(x) \le 1 \Big\} \le C_4 \cdot\cpc{K}, 
\end{equation} 
and 
\begin{equation}\label{caract.3}
C_3\cdot \cpc{K} \le  \inf  \Big\{\sum_{x\in K} \varphi(x) : \varphi \ge 0 \text{ on }K,  \, \min_{x\in K} G\varphi (x) \ge 1\Big\} \le C_4\cdot \cpc{K}.
\end{equation}
}
\end{corollary}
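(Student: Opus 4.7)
The plan is to derive both variational identities directly from Theorem~\ref{theo.potentiel}, using that the equilibrium measure $e_K$ supplies a near-optimal test function in each case. The key algebraic tool is the symmetry $G(x,y)=G(y,x)$, which follows from the fact that the increments of the walk are uniform on nearest neighbours, so that reflecting all increments leaves the law of $(S_u)_{u\in \cT_-}$ unchanged. Combined with Fubini, this gives for any nonnegative $\varphi,\psi$ supported on $K$ the duality
\[
\sum_{x\in K}\psi(x)\,G\varphi(x)\;=\;\sum_{x\in K}\varphi(x)\,G\psi(x).
\]
Also, in each sup/inf it is enough to consider $\varphi$ supported on $K$: the objective only sees values of $\varphi$ on $K$, while restricting support can only shrink $G\varphi$ on $K$, relaxing the constraint in \eqref{caract.2} and tightening it in \eqref{caract.3} (in the latter case, one first observes that an admissible $\varphi$ that is positive outside $K$ can be replaced by its restriction plus an appropriate renormalisation, but it is cleaner to argue directly with arbitrary $\varphi$, since the argument below only uses $G\varphi(x)\ge 1$ for $x\in K$).

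For~\eqref{caract.2}, the lower bound follows by plugging in $\varphi=e_K/C$, where $C$ is the constant from~\eqref{Upper.theo}: then $\max_{x\in K}G\varphi(x)\le \|Ge_K\|_\infty/C\le 1$, and the objective equals $\cpc{K}/C$, so $C_3=1/C$ works. For the upper bound, let $\varphi\ge 0$ on $K$ with $\max_{x\in K}G\varphi(x)\le 1$. Using~\eqref{Lower.theo} followed by the duality above and then the constraint,
\[
c\sum_{x\in K}\varphi(x)\;\le\;\sum_{x\in K}\varphi(x)\,Ge_K(x)\;=\;\sum_{y\in K}e_K(y)\,G\varphi(y)\;\le\;\cpc{K},
\]
giving $C_4=1/c$.

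The proof of~\eqref{caract.3} is exactly dual. The upper bound comes from choosing $\varphi=e_K/c$, which by~\eqref{Lower.theo} satisfies $\min_{x\in K}G\varphi(x)\ge 1$ and has objective $\cpc{K}/c$. For the lower bound, take any admissible $\varphi\ge 0$ on $K$ with $\min_{x\in K}G\varphi(x)\ge 1$; then, using~\eqref{Upper.theo},
\[
\cpc{K}\;=\;\sum_{y\in K}e_K(y)\;\le\;\sum_{y\in K}e_K(y)\,G\varphi(y)\;=\;\sum_{x\in K}\varphi(x)\,Ge_K(x)\;\le\;C\sum_{x\in K}\varphi(x).
\]

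There is really no main obstacle beyond Theorem~\ref{theo.potentiel} itself: once that is in hand, both bounds in both characterisations reduce to the same one-line swap using the symmetry of $G$. The only point that deserves verification is the symmetry of $G$, which as noted is immediate from the symmetry of the step distribution.
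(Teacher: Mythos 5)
Your proof is correct, but the nontrivial direction (the upper bound in \eqref{caract.2}, and dually the lower bound in \eqref{caract.3}) goes by a genuinely different route than the paper. The paper introduces the symmetrised kernel $\overline G(x,y)=\sum_z g(x,z)g(z,y)$, which is positive definite and comparable to $G$ by \eqref{convol.G}, and runs a Cauchy--Schwarz argument in the associated inner product: $\langle \varphi,e_K\rangle \gtrsim \sum_{x\in K}\varphi(x)$ by \eqref{Lower.theo}, while $\langle \varphi,e_K\rangle^2\le \langle\varphi,\varphi\rangle\,\langle e_K,e_K\rangle \lesssim \big(\sum_{x\in K}\varphi(x)\big)\cdot \cpc{K}$ using the constraint $G\varphi\le 1$ on $K$ and \eqref{Upper.theo}. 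You instead exploit the exact symmetry $G(x,y)=G(y,x)$ (valid here because the step distribution is symmetric, and also visible from \eqref{convol.G} together with the symmetry of $g$) to transpose $\sum_{x\in K}\varphi(x)\,Ge_K(x)=\sum_{y\in K}e_K(y)\,G\varphi(y)$ and conclude in one line; this is more elementary (no auxiliary kernel, no Cauchy--Schwarz) and is the classical duality argument for capacities. What the paper's version buys is robustness: it only needs $G$ to be comparable to a symmetric positive-definite kernel, so it would survive for centred but non-symmetric finitely supported step distributions, to which the paper claims its results extend, whereas your swap needs $G$ itself to be symmetric. Both arguments consume the same inputs, namely the two halves of Theorem~\ref{theo.potentiel}, and your explicit treatment of \eqref{caract.3}, which the paper leaves to the reader, is the correct dual argument.

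One small caveat: your parenthetical claim that for \eqref{caract.3} one may ``argue directly with arbitrary $\varphi$'' should be dropped. The transposition produces $\sum_{x}\varphi(x)\,Ge_K(x)$ summed over the full support of $\varphi$, so the chain only controls $\sum_{x\in \Z^d}\varphi(x)$ unless $\varphi$ is supported on $K$; and if mass outside $K$ were genuinely allowed, the lower bound in \eqref{caract.3} would be false (a large point mass just outside $K$ satisfies the constraint with zero objective). This is harmless for the corollary as intended, where $\varphi$ is a function on $K$ (this is also how the paper reads it, its inner product being defined on functions supported on $K$), but the aside as stated is not accurate.
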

Finally another immediate application of Theorem~\ref{theo.potentiel} is the following general lower bound for the branching capacity of a set in terms of its volume. 
\begin{corollary}\label{cor.lowerbcap}
\rm{ There exists a positive constant $c>0$, such that for any finite set $K\subset \mathbb Z^d$, 
$$\cpc{K}  \ge c|K|^{1-\frac 4d}. $$ 
}
\end{corollary}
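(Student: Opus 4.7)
The plan is to derive the lower bound on $\cpc{K}$ by combining the pointwise lower bound~\eqref{Lower.theo} on $Ge_K$ from Theorem~\ref{theo.potentiel} with a standard volumetric upper bound on $G(\,\cdot\,, K)$, via a double-counting argument.

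First, by~\eqref{Lower.theo}, for every $x\in K$ one has $Ge_K(x)\ge c$, so summing over $x\in K$ gives
\begin{equation*}
c|K| \;\le\; \sum_{x\in K} Ge_K(x) \;=\; \sum_{x\in K}\sum_{y\in K} G(x-y)\, e_K(y).
\end{equation*}
Since our step distribution is symmetric, $G(z)=G(-z)$, hence exchanging the order of summation and setting $G(y,K):=\sum_{x\in K}G(y-x)$ yields
\begin{equation*}
c|K| \;\le\; \sum_{y\in K} e_K(y)\, G(y,K) \;\le\; \bigl(\max_{y\in\Z^d} G(y,K)\bigr) \cdot \cpc{K}.
\end{equation*}

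The next step is to show $\max_{y\in\Z^d} G(y,K) \le C|K|^{4/d}$, uniformly in the finite set $K$. This is a classical rearrangement estimate: by Zhu's asymptotics for the Green's function of the infinite invariant tree (analogous to the bound for the critical tree $\cT_c$), one has $G(z)\le C'\|z\|^{-(d-4)}$ for $z\neq 0$, with $G(0)$ finite. For any fixed $y$, the sum $\sum_{x\in K} G(y-x)$ is maximised (up to constants) when $K$ is replaced by the Euclidean ball $B(y,R)$ with $|B(y,R)|=|K|$, i.e.\ $R\asymp|K|^{1/d}$. A direct computation then gives $\sum_{x\in B(y,R)} G(y-x) \asymp R^4 \asymp |K|^{4/d}$.

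Combining the two bounds yields $\cpc{K} \ge (c/C)\,|K|^{1-4/d}$, as desired. The only nontrivial ingredient is the pointwise decay estimate $G(z)\le C'\|z\|^{-(d-4)}$, which is standard in the transient BRW potential theory developed by Zhu and can be taken as known; the rest is just double counting plus a rearrangement inequality. The main conceptual point is that~\eqref{Lower.theo} plays the role of the identity $ge_K(x)=1$ for $x\in K$ in the random walk setting, and the argument mimics the classical derivation $\mathrm{cap}(K)\ge c|K|^{1-2/d}$ for simple random walk in $d\ge 3$, with the exponent $2$ in that setting replaced here by $4$ because of the mean-field scaling of BRW.
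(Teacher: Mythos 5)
Your proof is correct and is essentially the paper's own argument: sum the pointwise bound~\eqref{Lower.theo} over $x\in K$ to get $\sum_{x,y\in K}G(x,y)e_K(y)\gtrsim |K|$, then bound $\sum_{x\in K}G(x,y)\lesssim |K|^{4/d}$ via~\eqref{Green.asymp} and a rearrangement, and divide (the paper's text cites~\eqref{Upper.theo} for the first step, which is evidently a typo for~\eqref{Lower.theo}, exactly the estimate you use). No gaps.
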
 
Together with the fact already mentioned that the branching capacity of a ball $B(x,r)$ is of order~$r^{d-4}$, this shows that as for the usual Newtonian capacity, balls are sets having minimal branching capacity among those with fixed volume (at least up to universal constants). 

\textbf{The non-Markovian nature of the BRW.} We mentioned already that this work can be seen as
extending some key random walk potential theoretic results to a non-Markovian setting: 
the infinite invariant tree defined
above. We describe the latter informally as being a one-sided infinite random walk, 
the so-called spine, on each node of which there are two critical trees hanging off, one in the past and the other one in the future. 
It is only when conditioning on the spine that we can disentangle past and future.
However, as we average over both the spine and its dangling trees there is often a subtle tradeoff between
what is required from the spine, and what the dangling trees achieve.
We use extensively that we are in a regime where the typical behaviour is dominant, and the spine
is a simple random walk which typically spends time $L^2$ in a region of diameter $L$. If we can place ourselves
in a situation where the dangling trees all see the same environment (typically avoid some set far away) then
their initial position is innocuous, and we can use uniform bounds on them. 

\textbf{Organisation.} The rest of the paper is organised as follows. 
In Section~\ref{sec.Prel} we provide some preliminary results, such as the shift-invariance of the infinite tree, some basic facts about Green's functions, and also about the equilibrium measure. We also recall important results of Zhu on hitting probability estimates. In Section~\ref{sec.Proof1} we first give a sketch of the proof of~\eqref{Upper.theo} and in the remaining section we give the full proof as well as provide a first proof of Corollary~\ref{thm:variationalchar}, that only uses the upper bound part of Theorem~\ref{theo.potentiel}. 
In Section~\ref{sec.Moment} we prove Theorem~\ref{theo.expmoment}, together with Corollary~\ref{cor.loctimeball}. Then in Section~\ref{sec.Lowerbound} 
we prove~\eqref{Lower.theo}, which concludes the proof of Theorem~\ref{theo.potentiel}. Finally in Section~\ref{sec.cor}, we give another short proof of Corollary~\ref{thm:variationalchar}, and prove the remaining results, Corollaries~\ref{cor-subset}, \ref{cor.locset}, \ref{cor:variation}, and~\ref{cor.lowerbcap}. 
 

\section{Preliminaries}\label{sec.Prel}
\subsection{General notation}
Given two real functions $f$ and $g$, we write $f\lesssim g$, or sometimes $f=\mathcal O(g)$, when there exists a constant $C>0$, such that $f(x) \le Cg(x)$, for all $x$. We write $f\asymp g$, when both $f\lesssim g$ and $g\lesssim f$. We write $f\sim g$, when $f(x)/g(x) \to 1$, as $x\to \infty$. 

Given $A\subset \mathbb Z^d$, we let $A^c := \mathbb Z^d\bs A$, and we define the boundary $\partial A$ of $A$ as the set of elements of~$A$ which have at least one neighbor in $A^c$.

We let $\|x\|$ denote the Euclidean norm of an element $x\in \mathbb Z^d$. We write $a\wedge b$ and $a\vee b$ respectively for the minimum and maximum between two real numbers $a$ and $b$. For $r>0$, we write $G(r) = r^{4-d}$.

\subsection{Trees and tree-indexed random walks}
Let $\til{\mu}$ be the probability measure defined by $\til{\mu}(i)=\sum_{j\geq i+1} \mu(j)$. 
A tree where only the distribution of the offspring of the root is $\til{\mu}$ and everywhere else it is $\mu$ is called a {\bf $\mu$-adjoint tree} and we denote it by $\til{\cT}_c$.

\begin{definition}
	\rm{
	We define the {\bf shift transformations} $\theta$ and its inverse $\theta^{-1}$ on $\cT$ by adding $1$, respectively $-1$, to all labels and then the vertex with label $0$ becomes the new root. Furthermore, these maps extend naturally to transformations on the law of the random walk indexed by $\cT$.  
	}
\end{definition}

\begin{proposition}\label{pro:shiftinvariance}
	\rm{The laws of $\cT$ and of the random walk indexed by $\cT$ are invariant under the shift transformations $\theta$ and $\theta^{-1}$. }
\end{proposition}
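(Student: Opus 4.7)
The plan is to prove shift-invariance for the tree $\cT$ first, and then transfer it to the random walk using the fact that, given $\cT$, the edge-increments are iid and symmetric.

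For the tree, I would invoke the identification of $\cT$ (ignoring its spatial structure) with Aldous's invariant tree of~\cite{Ald91}, already noted in the introduction. By Aldous's theorem, $\cT$ together with its DFS labelling is the local weak limit, as $n\to\infty$, of a Galton--Watson tree with offspring distribution $\mu$, conditioned to have $n$ vertices, rooted at a uniformly chosen vertex, and labelled $0,1,\dots,n-1$ by DFS from that root (with a fixed planar convention). In the finite setting, the operation ``re-root at the vertex currently labelled $1$ and relabel'' (that is, $\theta^{-1}$) is a measure-preserving bijection on the set of planar trees of size $n$ carrying a uniformly random root: the underlying unrooted planar tree is untouched, and the uniform distribution on the $n$ possible roots is sent to itself. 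Hence the law of the finite tree is invariant under $\theta^{-1}$, and passing to the local limit gives that $\cT$ has the same law as $\theta^{-1}\cT$; the invariance under $\theta$ follows either by symmetry or by the same argument applied to the vertex labelled $-1$.

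For the walk, I would use the following observation: conditionally on $\cT$, the family $\{S_u-S_{u^-}\}_{u\neq\emptyset}$ of edge-increments is iid uniform on nearest neighbours of the origin, and these variables are attached to the \emph{edges} of $\cT$, not to its root. The shift $\theta$ preserves the edge set; what changes is only the vertex declared to be the root. Defining the shifted walk by $\widetilde S_u := S_u - S_v$, where $v$ is the new root, yields a family whose conditional law given $\theta\cT$ coincides with the conditional law of $\{S_u\}_{u\in\cT}$ given $\cT$, because the increment distribution is translation-symmetric. Combining this with the tree invariance already established yields the statement for the walk.

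The main technical point is matching the DFS conventions used to label $\cT$ with the DFS convention on the finite approximating trees, so that shifting the label by $\pm 1$ in $\cT$ corresponds precisely to moving the root to the adjacent DFS-visited vertex in the finite picture (in particular, the comment in the excerpt that the root has no past offspring and the rule ``the first child of the root is special'' both have to be read off correctly from Aldous's limit). A more self-contained alternative would decompose $\cT$ according to the spine and the sequences of independent critical subtrees hanging on each side of each spine vertex, write down the joint law in terms of $\mu$, $\musb$, and the uniform choice of the next spine vertex, and check directly that $\theta$ permutes these data without changing their distribution. This route works but is bookkeeping-heavy, while the Aldous route is clean.
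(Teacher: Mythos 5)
The paper itself gives no proof of this proposition: it refers to \cite{Zhu18} and \cite{BW22}, whose arguments are of the direct, spine-decomposition type that you relegate to a ``bookkeeping-heavy alternative''. Your treatment of the walk, given invariance of the tree, is fine: re-rooting reverses the orientation of the finitely many edges between the old and the new root, recentering by the position of the new root and using symmetry of the step distribution under negation preserves the conditional law of the increments.

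The tree step, however, contains a genuine gap. The claimed exact finite-$n$ invariance is false as stated. The map ``re-root at the vertex labelled $1$ and recompute the DFS labelling'' sends the root to its first child in the re-rooted planar structure; this is not a bijection of the vertex set (first children are reached from several roots, while the last vertex in DFS order need not be reached at all), so it does not push the uniform root measure to itself. Concretely, for a Bienaym\'e--Galton--Watson tree conditioned to have $3$ vertices and re-rooted at a uniform vertex, the underlying tree is a path on $3$ vertices and the rooted shape is a path with probability $2/3$ and a cherry (root with two children) with probability $1/3$; your map sends every path-shaped tree to a cherry and every cherry to a path, so the law is visibly not preserved at finite $n$. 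The correct finite-size statement is the invariance of the coding (\L{}ukasiewicz or contour) sequence under a uniform cyclic shift, i.e.\ one keeps the \emph{original} DFS order and shifts the distinguished label, and one must then check that, locally around the new root, this shifted labelling agrees with the past/future labelling of the limit object (DFS in the future, DFS from infinity in the past) --- the finite-$n$ discrepancy exhibited above is exactly the boundary effect absorbed by the negative labels in the limit. This identification is the real combinatorial content of the proposition, not a matter of matching conventions, so as written your argument does not establish the invariance of $\cT$; either carry out that identification, or prove the invariance directly on the infinite tree by computing the law of the decorations (offspring numbers $\mu$, $\musb$, the uniform choice of the special child, and the subtrees hanging off the spine) around the vertex labelled $\pm 1$, as in \cite{Zhu18,BW22}.
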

We refer the reader to \cite{Zhu18} or~\cite{BW22} for a full proof of this proposition. The fact that the law of the random walk indexed by $\cT_+$ is invariant under $\theta^{-1}$ was first observed by Le Gall and Lin \cite{LGL1,LGL2}, and is reminiscent of the invariance property of sin-trees discovered by Aldous \cite{Ald91}.

\begin{figure}[ht!]\label{fig:pastfuturegreenred}
	\begin{center}
		\includegraphics[scale=1]{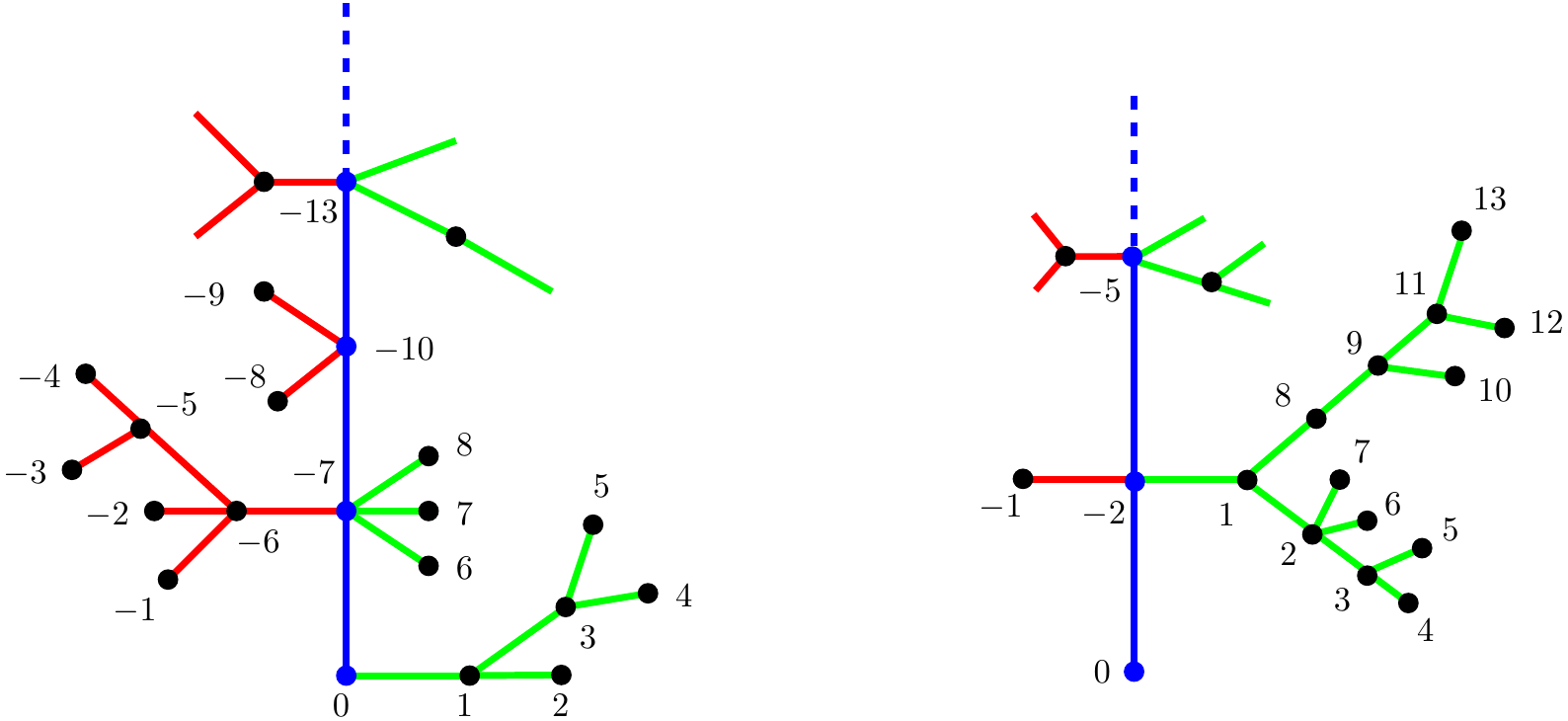}
		\end{center}
		\caption{		Applying the shift transformation $\theta^8$: the blue part is the spine, the green part is the future and the red part is the past.}
		\end{figure}

%
%
%

For a special vertex $u$ of $\cT$ (i.e. on the spine of $\cT$) we write $\offp(u)$ and $\offf(u)$ for the number of normal offsprings of $u$ in the past and future of $\cT$ respectively. By the construction of $\cT$ we then see that for all $i,j\in \N$, 
\[
\pr{\offp(u)=i, \offf(u)=j}=\mu(i+j+1),
\]
and hence both $\offp(u)$ and $\offf(u)$ are distributed according to $\til{\mu}$. As a consequence, the subtrees of~$\cT$ 
emanating from the vertices on the spine, either in the past or in the future, are $\mu$-adjoint trees. A random walk indexed by a $\mu$-adjoint tree is called an {\bf adjoint branching random walk}.

For $n\ge 0$, we let $\cT^x(n)$ be the value of the random walk indexed by $\cT$ starting from $x$ at the vertex with label $n\in \mathbb Z$, 
and similarly for the other tree-indexed walks. 
For $a\le b$ in $\mathbb Z$, we write $\cT^x[a,b]:=\{\cT^x(n)\}_{n\in [a,b]}$, and similarly for $\cT^x[a,b)$, $\cT^x(a,b]$, or for other random trees.  
We write $(X^x(n))_{n\in \N}$ for the random walk indexed by the spine parametrised by its intrinsic labelling (i.e.\ its natural time parametrisation), when it starts from $x\in \mathbb Z^d$.

With a slight abuse of notation we shall sometimes denote the tree-indexed random walk (as a process) by $\cT^x$ (which was formally defined as a random subset of $\mathbb Z^d$). 
For integers $0\le a \le b\le \infty$, we also write 
	$\F_-^x[a,b]$ (respectively $\F_+^x[a,b]$) for the positions of $\cT^x$ on the set of vertices lying in the adjoint trees in the past (respectively future) emanating from the points on the spine with time index (in the natural parametrisation of the spine) in $[a,b]$, and similarly with $[a,b)$. See Figure~\ref{fig:pastfuturegreenred} for an illustration of these definitions.


\subsection{Basic facts on Green's functions and branching capacity}
We denote by $g$ the Green's function of a simple random walk $(X_n)_{n\ge 0}$, in other words: 
$$g(x,y) = \mathbb E_x\Big[\sum_{n=0}^\infty \1(X_n = y)\Big]= g(0,y-x),$$
where $\mathbb E_x$ denotes the expectation for a walk starting from $x$.  
Recall that (see e.g.~\cite{LL10}), 
\begin{equation}\label{green.rw}
g(x,y) \asymp \frac 1{1+\|x-y\|^{d-2}}.
\end{equation} 
In fact by linearity of expectation and criticality of $\mu$, one also has 
$$g(x,y ) =\mathbb E\Big[\sum_{u\in \cT_c} \1(S_u^x = y)\Big]. $$ 
Similarly we define 
$$G(x,y ) = G(y-x) = \mathbb E\Big[\sum_{u\in \mathcal T_-} \1(S_u^x = y)\Big]. $$ 
Since the mean number of normal offspring of a vertex on the left of the spine of $\cT$ has mean $\sigma^2/2$, 
we deduce that for any $x,y\in \mathbb Z^d$, 
\begin{align}\label{convol.G} 
\nonumber G(x,y) & =  \sum_{z\in \mathbb Z^d} \Big(g(x,z) - \1(z=x)\Big) \cdot \Big(\1(z=y) + \frac{\sigma^2}{2} ( g(z,y) - \1(z=y))\Big) \\
& = \frac{\sigma^2}{2} \sum_{z\in \mathbb Z^d} g(x,z)g(z,y) + \mathcal O(g(x,y)).  
\end{align}
Using~\eqref{green.rw},  this yields 
\begin{equation}\label{Green.asymp}
G(x,y) \asymp \frac 1{1+ \|x-y\|^{d-4}}. 
\end{equation}
We now state some important facts related to the equilibrium measure and the branching capacity.
The next proposition is a simple last passage decomposition formula which will be used widely in the paper.

\begin{proposition}\label{pro:lastpassagedec}
	Let $K'\subseteq K$ and $B$ be finite subsets of $\Z^d$ such that $B$ contains $K'$ and points at distance $1$ from $K'$. Then 
	\[
	\sum_{y\in K'} e_{K}(y) = \sum_{w\in \partial B} \pr{\cT_-^w\cap (K\cup B)=\emptyset, \cT^w_+ \text{ first hits } K \text{ in } K'}.
	\]
\end{proposition}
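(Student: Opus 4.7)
Plan:

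My approach is to apply the shift invariance of $\cT$ (Proposition~\ref{pro:shiftinvariance}) in a change-of-root / last-passage-decomposition argument. Starting from $\sum_{y \in K'} e_K(y) = \sum_{y \in K'} \bP(E_y)$ with $E_y := \{\cT_-^y \cap K = \emptyset\}$, for each $y \in K'$ on the event $E_y$ I would pick a canonical ``pivot'' $w \in \cT_-^y \cap B$ as follows: the set $\cT_-^y \cap B$ is a.s.\ nonempty (since by hypothesis the first spine step $X^y(1)$ is a $\mathbb Z^d$-neighbor of $y \in K'$ and hence lies in $B$, and $X^y(1) \in \cT_-^y$) and finite (as $B$ is finite), and $w$ is defined as the vertex of this set visited deepest in the depth-first search from root into past---equivalently, the vertex with the most negative label---with label magnitude $M \ge 1$. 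By construction, every past vertex of $\cT^y$ with old label strictly less than $-M$ avoids $B$ (by maximality of $M$) and avoids $K$ (by $E_y$); and $w \in \partial B$, because the immediate DFS-successor of $w$, being a $\mathbb Z^d$-neighbor of $w$, lies in $B^c$.

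Applying the shift $\theta^M$, by Proposition~\ref{pro:shiftinvariance} the re-rooted tree has the law of $\cT^w$. Under the shift, the new past $\cT_-^w$ consists exactly of the old vertices with old label $< -M$, hence avoids $K \cup B$, matching the first clause of the RHS event. The old root $y$ sits in the new future with new label $M$; every new-future vertex with new label strictly less than $M$ is either $w$ itself (not in $K$, since $w \in \cT_-^y$ and $E_y$ holds) or an old past vertex visited before $w$ in the DFS (also not in $K$ by $E_y$). Thus $\cT_+^w$ first hits $K$ at $y \in K'$, matching the second clause of the RHS event. Since on the RHS event at each $w \in \partial B$ the first-hit $y \in K'$ and its new-future depth $M$ are uniquely determined by the tree, the correspondence $(y, w, M) \leftrightarrow (w, y, M)$ is bijective; summing over $(y, M)$ for fixed $w$ gives the RHS summand at $w$, and summing over $w \in \partial B$ yields the identity.

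The main technical obstacle I anticipate is a fully rigorous justification that the pivot $w$ lies in $\partial B$: the argument via ``immediate DFS-successor is a $\mathbb Z^d$-neighbor'' is immediate when $w$ has a tree-child in the old tree, which is automatic for spine vertices, but for a $w$ that happens to be a leaf of a past branch the next DFS vertex is a tree-``cousin'' reached via back-tracking and need not be a $\mathbb Z^d$-neighbor of $w$. In this corner case one may need to refine the pivot choice (for example, restricting to spine vertices and adding supplementary conditioning that past branches beyond $w$ avoid $B$, then balancing contributions) to cover all configurations; I expect the core identity to survive such a refinement via Proposition~\ref{pro:shiftinvariance}.
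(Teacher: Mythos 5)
Your decomposition is exactly the paper's: your pivot, the vertex with the most negative label $-M$ whose position lies in $B$, is the paper's last visit $L$ of the past to $B$, and the remaining steps (apply $\theta^M$, use Proposition~\ref{pro:shiftinvariance}, identify the shifted event, sum over the first-hit data $(y,M)$, which are determined on the right-hand event) reproduce the paper's proof. The one step you flag yourself is, however, a genuine gap as written: your justification that $w\in\partial B$ rests on the claim that the vertex with label $-(M+1)$ is a $\Z^d$-neighbour of $w$, and this fails exactly in the backtracking situation you describe (the pivot a leaf of a past branch), since consecutive labels in a depth-first order need not be tree-adjacent. The point matters, because the identity really does require the sum to run over $\partial B$ only, so the interior of $B$ must be shown to contribute nothing.

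The fix is a small observation, and it is not the repair you sketch: restricting pivots to spine vertices and ``balancing contributions'' is unnecessary and would spoil the clean one-to-one correspondence. The right neighbour to use is not the label-successor but the tree-neighbour of the pivot in the direction of the unique end of $\cT$. After applying $\theta^M$ the pivot becomes the root and, by Proposition~\ref{pro:shiftinvariance}, the re-rooted relabelled tree is again a copy of $\cT$; its root has a special child (the first vertex of the new spine), which is tree-adjacent to the root, hence sits at a $\Z^d$-neighbour of $w$, and which belongs to the new past, i.e.\ has old label $<-M$. By maximality of $M$ its position avoids $B$, so $w\in\partial B$. Equivalently, decompose over all $w\in B$ and note that for $w\in B\setminus\partial B$ the shifted event is empty: on $\{\cT^w_-\cap B=\emptyset\}$ the first spine vertex of $\cT^w$ lies at a lattice neighbour of $w$ and in $\cT^w_-$, which is impossible when every neighbour of $w$ is in $B$. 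Two further small points: finiteness of the number of visits of the past to $B$ comes from transience ($\sum_{z\in B}G(y-z)<\infty$ for $d\ge 5$), not merely from $|B|$ being finite; and your ``bijective'' accounting implicitly needs that on the right-hand event the first hit of $K$ cannot occur at the root itself, which holds because $w\in\partial B$ while $K'\cap\partial B=\emptyset$ (every neighbour of a point of $K'$ lies in $B$), and that shifting back a configuration with first-hit data $(y,M)$ recovers a tree rooted at $y$ whose last $B$-visit in the past is exactly at label $-M$; both checks are immediate but worth recording.
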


\begin{proof}[\bf Proof]
Let $L$ be the last time that the past of the tree-indexed random walk is in $B$. For all~$y\in K'$ we can now write 
\begin{align*}
	e_K(y) = \pr{\cT^y_-\cap K =\emptyset} = \sum_{w\in \partial B} \sum_{n\geq 1} \pr{\cT_-^y\cap K=\emptyset, \cT^y(-n)=w, L=n},
\end{align*}
since $L$ is finite almost surely.
	Using the invariance of the tree $\cT^y$ under the shift $\theta^n$ by Proposition~\ref{pro:shiftinvariance} we get 
	\[
	\pr{\cT_-^y\cap K=\emptyset, \cT^y(-n)=w, L=n} = \pr{\cT^w(n)=y, \cT_-^w\cap (K\cup B)=\emptyset, \cT^w[0,n)\cap K=\emptyset}.
	\]
	Taking now the sum over all $y\in K'$ and all $n\geq 1$ completes the proof. 
\end{proof}

For a finite subset $K\subset \mathbb Z^d$, and $A\subseteq \mathbb Z^d$, we write $e_K(A) = \sum_{x\in A} e_K(x)$. 
\begin{lemma}\label{lem-shrink}
\rm{For any finite set $K\subseteq \Z^d$ containing two disjoint sets $U$ and $V$, we have
\begin{equation*}
e_K(U\cup V)\ge e_{K\bs V}(U).
\end{equation*}}
\end{lemma}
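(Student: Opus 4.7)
My plan is to apply the last passage decomposition from Proposition~\ref{pro:lastpassagedec} to each of the two quantities $e_K(U\cup V)$ and $e_{K\bs V}(U)$, using the \emph{same} sufficiently large ball $B$, and then to compare the resulting events term-by-term for each $w\in\partial B$.

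First, I would choose $B$ to be any finite set containing $K$ and all points at $\ell^\infty$-distance $1$ from $K$ (so in particular $B$ contains $U$, $V$ and $U\cup V$ along with their $1$-neighbourhoods). Applying Proposition~\ref{pro:lastpassagedec} once with the set $K$ and subset $K'=U\cup V\subseteq K$, and once with the set $K\bs V$ and subset $K'=U\subseteq K\bs V$, gives
\begin{align*}
e_K(U\cup V) &= \sum_{w\in \partial B} \pr{\cT_-^w\cap B=\emptyset,\ \cT^w_+ \text{ first hits } K \text{ in } U\cup V},\\
e_{K\bs V}(U) &= \sum_{w\in \partial B} \pr{\cT_-^w\cap B=\emptyset,\ \cT^w_+ \text{ first hits } K\bs V \text{ in } U},
\end{align*}
where I have used that $K\cup B=B=(K\bs V)\cup B$, since $B\supseteq K$.

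The past-avoidance event $\{\cT_-^w\cap B=\emptyset\}$ is identical in both expressions, so it suffices to establish the inclusion of future events
\[
\bigl\{\cT^w_+ \text{ first hits } K\bs V \text{ in } U\bigr\}\ \subseteq\ \bigl\{\cT^w_+ \text{ first hits } K \text{ in } U\cup V\bigr\}
\]
for every starting point $w$. Unwinding the depth-first labelling convention used in the proof of Proposition~\ref{pro:lastpassagedec}, the left-hand event is $\exists n\ge 0$ with $\cT^w(n)\in U$ and $\cT^w[0,n)\cap(K\bs V)=\emptyset$. Given such an $n$, if $\cT^w[0,n)\cap K=\emptyset$ then $n$ itself witnesses the right-hand event; otherwise there is some $0\le n'<n$ with $\cT^w(n')\in V$, and by taking the smallest such $n'$ one obtains $\cT^w(n')\in V\subseteq U\cup V$ and $\cT^w[0,n')\cap K=\emptyset$, again witnessing the right-hand event. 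This gives the inclusion term-by-term and the lemma follows by summing over $w\in\partial B$.

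The only delicate point, and the one step I would double-check carefully, is that the notion of ``first hitting'' used in Proposition~\ref{pro:lastpassagedec} really refers to the depth-first spine-based labelling of $\cT_+^w$ (so that ``first hit'' corresponds to the minimal $n\ge 0$ with $\cT^w(n)$ in the target set); once this convention is fixed the elementary inclusion above is automatic and there is no probabilistic inequality to prove.
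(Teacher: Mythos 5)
Your proposal is correct and follows essentially the same route as the paper: both apply the last passage decomposition of Proposition~\ref{pro:lastpassagedec} with a common large set $B\supseteq K$ (so the past-avoidance events coincide) and then use the inclusion $\{\cT_+^w \text{ first hits } K\setminus V \text{ in } U\}\subseteq\{\cT_+^w \text{ first hits } K \text{ in } U\cup V\}$, which the paper simply states as an observation while you spell out the elementary first-hitting-index argument. Your reading of ``first hits'' via the depth-first labelling is the intended one, so there is no gap.
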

\begin{remark}\label{rem.inclusion} \rm{An immediate consequence of this lemma is that the branching capacity is monotone for inclusion, a fact already proved by~\cite{Zhu1}. Indeed, applying the lemma with $V= K\bs U$, gives that if $U\subseteq K$, then $\cpc{U} \le \cpc{K}$. }
\end{remark} 

\begin{proof}[\bf Proof of Lemma~\ref{lem-shrink}]
Let $R>0$, be such that $K\subset B(0,R-1)$. Applying Proposition~\ref{pro:lastpassagedec} we get 
\begin{align*}
	e_K(U\cup V) = \sum_{w\in \partial B(0,R)} 
\mathbb P\big(\cTp^w \cap B(0,R)= \emptyset , \cT^w_+ \text{ first hits $K$ in }U\cup V\big).
\end{align*}
Observing that  $\{ \cT_+^w \text{ first hits } K\bs V\text{ in } U\}\subseteq \{ \cT_+^w \text{ first hits } K\  \text{in } U\cup V\} $, we then obtain
\begin{align*}
	e_K(U\cup V) \geq \sum_{w\in \partial B(0,R)} 
\mathbb P\big(\cTp^w \cap B(0,R)= \emptyset ,  \cT_+^w \text{ first hits } K\bs V\text{ in } U\big) = e_{K\setminus V}(U),
\end{align*}
applying Proposition~\ref{pro:lastpassagedec} again for the last equality.
\end{proof}

The next result provides the exact order of magnitude of the branching capacity of balls. 
\begin{proposition}[\cite{Zhu1}]\label{lem.bcapballs}
\rm{There exist positive constants $c_1,c_2$, such that for all $r\ge 1$, 
$$c_1\cdot r^{d-4}\le \cpc{B(0,r)} \le c_2\cdot r^{d-4}.$$}
\end{proposition}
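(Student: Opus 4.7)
This is Zhu's result~\cite{Zhu1}; here is the natural strategy. The bridge is the hitting asymptotic~\reff{intro-1}, $\cpc{B(0,r)} = \lim_{\|x\|\to\infty} \mathbb P(\mathcal T_-^x\text{ hits }B(0,r))/G(x)$, combined with $G(x) \asymp \|x\|^{4-d}$ from~\reff{Green.asymp}; the hitting probabilities are then estimated by first- and second-moment methods.

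For the lower bound, I would run a second-moment / Paley--Zygmund argument. Take $x$ with $\|x\|=2r$ and set $N := \ell_{\mathcal T_-^x}(B(0,r))$. The first moment is
\[
\mathbb E[N]=\sum_{y\in B(0,r)}G(x,y)\asymp r^d\cdot r^{4-d}=r^4,
\]
using that $G(x,y)\asymp r^{4-d}$ for $y\in B(0,r)$ together with $|B(0,r)|\asymp r^d$. For the second moment, decompose pairs $(u,v)\in\mathcal T_-\times\mathcal T_-$ by their most recent common ancestor $w$: conditional on the position $S_w^x$, the paths from $w$ to $u$ and from $w$ to $v$ evolve independently, so $\mathbb E[N^2]$ reduces to sums of products of three Green's function factors (one from $x$ to $w$, two from $w$ to $B(0,r)$). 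Shift-invariance (Proposition~\ref{pro:shiftinvariance}) is used to turn averages over spine positions and over off-spine subtrees into standard convolutions, which collapse to $\mathbb E[N^2]\lesssim r^8$. Paley--Zygmund then gives $\mathbb P(\mathcal T_-^x\text{ hits }B(0,r))\gtrsim 1$, and~\reff{intro-1} yields $\cpc{B(0,r)}\gtrsim r^{d-4}$.

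For the upper bound, the same circle of ideas gives the heuristic: for $\|x\|\asymp r$ outside $B(0,r)$, $\mathbb P(\mathcal T_-^x\text{ hits }B(0,r))\le 1$ and $G(x)\asymp r^{4-d}$, whence~\reff{intro-1} forces $\cpc{B(0,r)} \lesssim r^{d-4}$ in the limit. Making this rigorous at finite scale requires more work: one controls the equilibrium measure $e_{B(0,r)}$ directly, using Proposition~\ref{pro:lastpassagedec} with $K=B(0,r)$ and $B=B(0,Lr)$ to express $\cpc{B(0,r)}$ as a sum over the shell $\partial B(0,Lr)$ of a product of a past-escape probability (summing up to $\cpc{B(0,Lr)}$) and a future-hit probability that one can bound either by $1$ or by a first-moment Green's function estimate, tuning $L$ so that the two competing scales balance to give the claimed $r^{d-4}$ bound.

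The main obstacle is really the second-moment estimate in the non-Markovian setting: the common-ancestor decomposition needs separate bookkeeping according to whether $w$ lies on the spine or in a normal subtree off the spine, and the use of shift-invariance is essential to reduce averages over internal spine labels to standard random-walk Green's function convolutions. For balls the resulting sums are explicit and tractable, but a similar approach for general finite sets is considerably harder---which in fact is one motivation for the more robust Green's-function/equilibrium identities developed in the present paper.
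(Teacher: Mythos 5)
First, note that the paper does not prove this proposition at all: it is imported verbatim from Zhu~\cite{Zhu1} (and is then used as an input in Sections~3--5), so there is no internal argument to compare yours against; I am judging your sketch on its own terms. Your overall strategy (moment methods for the local time of $\cT_-^x$ in the ball, converted into capacity bounds through the hitting/capacity correspondence) is indeed the natural one, and the lower-bound half is essentially sound except for a scale mismatch: you compute $\E{N}\asymp r^4$ and $\E{N^2}\lesssim r^8$ at $\|x\|=2r$, but then invoke~\reff{intro-1}, which is a statement about the limit $\|x\|\to\infty$; at fixed $\|x\|=2r$ it gives nothing. To close this you must either run the same second-moment computation at $\|x\|=\rho\to\infty$ (where $\E{N}\asymp r^d\rho^{4-d}$ and you need $\E{N^2}\lesssim \rho^{4-d}r^{d+4}+(\E{N})^2$, so that Paley--Zygmund gives $\prstart{\cT_-^x\cap B(0,r)\neq\emptyset}{}\gtrsim r^{d-4}G(\rho)$ and~\reff{intro-1} applies), or keep $\|x\|=2r$ but replace~\reff{intro-1} by the finite-distance upper bound in Theorem~\ref{theo-zhu} -- which is again Zhu's and of essentially the same depth as the statement being proved. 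This is fixable, but as written the conclusion does not follow from the numbers you computed.

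The upper bound is where the real gap lies. Your plan is Proposition~\ref{pro:lastpassagedec} with $K=B(0,r)$, $B=B(0,Lr)$, bounding the future-hit factor ``by $1$ or by a first-moment Green's function estimate'' and tuning $L$. Two problems. First, the past-escape and future-hit events do not factorise: they are only conditionally independent given the spine, and decoupling them is precisely the technical content of Section~3 of the paper, not a one-line step. Second, and more fundamentally, even granting the factorisation the scheme does not close: the sum of past-escape probabilities over $\partial B(0,Lr)$ is $\cpc{B(0,Lr)}$ (the capacity of a \emph{larger} ball), and the first-moment bound on the future-hit probability from distance $Lr$ is $\sum_{y\in B(0,r)}G(z,y)\asymp r^4L^{4-d}$, not $L^{4-d}$. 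This yields only $\cpc{B(0,r)}\lesssim r^4L^{4-d}\,\cpc{B(0,Lr)}$, and no choice of $L$ (nor iteration over scales, nor the trivial volume bound $\cpc{B(0,Lr)}\le (Lr)^d$) produces $r^{d-4}$ from this. The missing ingredient is a genuine \emph{hitting-probability} upper bound, i.e.\ the statement that conditionally on hitting $B(0,r)$ the infinite tree spends expected time $\gtrsim r^4$ in it (so that one may divide the first moment $r^d\rho^{4-d}$ by $r^4$), or equivalently an escape-probability estimate near the ball; this conditional second-moment/excursion argument is the heart of Zhu's proof and is absent from your sketch. So the lower bound needs only a repair of scales, but the upper bound as proposed would not succeed.
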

We shall often use later, without further reference that for a set $K\subseteq B(0,R)$, one has $\cpc{K} \lesssim R^{d-4}$, which follows from a combination of the last two results.


\subsection{Hitting probability for a branching random walk} \label{sec.hit}

\begin{definition}
	\rm{
	Suppose that $S$ is a random walk indexed by a spatial rooted tree $T$. On the event that $S$ hits a set $A$ we define the first entry vertex to $A$ as the smallest vertex $u\in T$ in the lexicographical order for which $S_u\in A$. If the unique path from the root of $\cT$ to the first entry vertex of $A$ is given by $(v_0,v_1,\ldots,v_k)$ for some~$k\in \N$, then we set $\Gamma(S)=(S_{v_0},\ldots, S_{v_k})$. We say that $S$ hits $A$ via $\gamma$, if $\Gamma(S)=\gamma$. 
	We also say that $S$ first hits the set $A$ in $a\in A$ if at the first entry vertex to $A$ the walk $S$ is at $a$. }
\end{definition}

Recall that $\til{\cT^x_c}$ denotes the range of an adjoint branching random walk starting from $x$. 
For a set $A\subseteq \mathbb Z^d$, we write
\[
b_A(x) = \mathbb P\big(\til{\cT}_c^x\cap A=\emptyset\big).
\]
For a path $\gamma:\{0,\dots,N\} \to \mathbb Z^d$ we write $|\gamma|=N$, i.e.\ $|\gamma|$ is the length of $\gamma$ without its first point. We write $s(\gamma)$ for the probability that a simple random walk started from $\gamma(0)$ follows this path for its first $|\gamma|$ steps. We say that $\gamma$ starts from $x$ if $\gamma(0)=x$, and that it goes from $x$ to a set $A$ and write $\gamma: x\to A$, if in addition $\gamma(N) \in A$ and $\gamma(\ell) \notin A$ for all $\ell < N$. Given $x,y\in \mathbb Z^d$, we write $\gamma:x\to y$ if $\gamma(0) = x$ and $\gamma(N)= y$. 

\begin{proposition}[{\cite[Proposition~5.1]{Zhu1}}]\label{prop.hit.Zhu}
\rm{Let $A\subset \mathbb Z^d$ and $x\in A^c$. Then for any $\gamma:x\to A$ we have 
$$\mathbb P(\cT^x_c \text{ hits $A$ via $\gamma$}) = s(\gamma) \prod_{\ell=0}^{|\gamma|-1} b_A(\gamma(\ell)). $$
} 
\end{proposition}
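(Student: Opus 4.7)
My plan is to unwind the definition by conditioning on the ancestral line $(v_0,v_1,\dots,v_k)$ in $\cT_c$ from the root $v_0=\emptyset$ to the first entry vertex to $A$, and then summing over the compatible combinatorial data. For each $0\le i<k$ let $N_i$ be the number of children of $v_i$ and let $j_i\in\{1,\dots,N_i\}$ be the rank of $v_{i+1}$ among the children of $v_i$. Writing $x_i:=\gamma(i)$, the event $\{\Gamma(\cT_c^x)=\gamma\}$ decomposes into three independent pieces given the path structure: (i) the tree structure probability $\prod_{i=0}^{k-1}\mu(N_i)$; (ii) the step probabilities $\prod_{i=0}^{k-1}\frac{1}{2d}$, which multiply to $s(\gamma)$; and (iii) the requirement that no vertex of $\cT_c^x$ strictly preceding $v_k$ in lexicographical order lies in~$A$.

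The lex-order bookkeeping is the only point requiring care. Since $v_k$ lies in the depth-first subtree rooted at $v_{i+1}$ for each $i<k$, every later sibling of $v_{i+1}$ (ranks $j_i+1,\dots,N_i$) and all its descendants are lex-larger than $v_k$ and thus carry no constraint; likewise the descendants of $v_k$ itself are irrelevant. On the other hand, the earlier siblings of $v_{i+1}$ (ranks $1,\dots,j_i-1$), together with the entire subtrees rooted at them, all precede $v_k$ and therefore must avoid $A$. Each such sibling is itself the root of an independent copy of~$\cT_c$, with spatial position obtained by a single uniform nearest-neighbour step from~$x_i$. Setting $p_i:=\mathbb P(\cT_c^{x_i+\xi}\cap A=\emptyset)$ with $\xi$ a uniform nearest-neighbour step independent of the tree, piece (iii) contributes $\prod_{i=0}^{k-1} p_i^{\,j_i-1}$.

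Putting the three pieces together and summing over the admissible $(N_i,j_i)$,
\[
\mathbb P(\cT_c^x\text{ hits }A\text{ via }\gamma)=s(\gamma)\prod_{i=0}^{k-1}\Big(\sum_{N\ge 1}\mu(N)\sum_{j=1}^{N}p_i^{\,j-1}\Big).
\]
The final step is to recognise each inner factor as $b_A(x_i)$. Since $x_i\in A^c$ for $i<k$, we have
\[
b_A(x_i)=\mathbb P(\til\cT_c^{x_i}\cap A=\emptyset)=\sum_{n\ge 0}\til\mu(n)\,p_i^{\,n},
\]
the children of the root of $\til\cT_c^{x_i}$ being independent copies of $\cT_c^{x_i+\xi}$. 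Interchanging the order of summation in the definition $\til\mu(n)=\sum_{j\ge n+1}\mu(j)$ gives
\[
\sum_{n\ge 0}\til\mu(n)\,p^{n}=\sum_{j\ge 1}\mu(j)\sum_{n=0}^{j-1}p^{n}=\sum_{N\ge 1}\mu(N)\sum_{j=1}^{N}p^{\,j-1},
\]
which is exactly the inner factor. Multiplying across $i=0,\dots,k-1$ yields the claimed identity. The only substantive obstacle is the combinatorial identification in the last display — once the lex-order analysis has isolated the correct subtrees to avoid $A$, the matching with $\til\mu$ is an elementary resummation.
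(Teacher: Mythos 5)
Your argument is correct. Note that the paper does not prove this statement at all: it is imported verbatim from Zhu (Proposition~5.1 of \cite{Zhu1}), so there is no in-paper proof to compare against; your decomposition over the structural data $(N_i,j_i)$ of the ancestral line of the first entry vertex, followed by the resummation identifying $\sum_{N\ge 1}\mu(N)\sum_{j=1}^{N}p^{j-1}=\sum_{n\ge 0}\til\mu(n)p^{n}=b_A(\gamma(i))$, is essentially the standard (and Zhu's) argument, and all the pieces are in place: the earlier-sibling subtrees are the only lex-smaller vertices carrying a constraint, each is an independent copy of $\cT_c$ rooted at a uniform neighbour of $\gamma(i)$, and the adjoint offspring law $\til\mu$ arises exactly from summing the rank $j_i$ of the path-child over $1,\dots,N_i$. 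Two points worth making explicit, though neither is a real gap: the ancestors $v_0,\dots,v_{k-1}$ also precede $v_k$ in lexicographical order, but their positions avoid $A$ automatically because $\gamma:x\to A$ forces $\gamma(\ell)\notin A$ for $\ell<|\gamma|$ (this is also what lets you drop the root of $\til\cT_c^{\gamma(i)}$ when computing $b_A(\gamma(i))$); and the events indexed by distinct tuples $(N_i,j_i)$ are disjoint because the first entry vertex, hence its ancestral line and the ranks along it, is unique on the event considered, so the sum is a genuine partition of $\{\Gamma(\cT_c^x)=\gamma\}$.
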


We now recall some hitting probability estimates obtained by Zhu. Given $x\in \mathbb Z^d$ and $K\subset \mathbb Z^d$, we let $d(x,K) = \inf \{\|x-y\| :y\in K\}$, and $\text{diam}(K) = \sup\{\|x-y\|:x,y\in K\}$.

\begin{theorem}[\cite{Zhu1,Zhu2}]\label{theo-zhu}
\rm{Let $\varepsilon>0$ be fixed. There exist positive constants $c_1,c_2$, such that for any finite nonempty $K\subset \Z^d$ and any 
$x\in \Z^d$, with $d(x,K)\ge \varepsilon \cdot \textrm{diam}(K)$, one has
\begin{equation}\label{hitting-infinite}
c_1 \frac{\cpc{K}}{d(x,K)^{d-4}}\le \mathbb P\big(\cTp^x\cap K\neq \emptyset \big)\le
c_2 \frac{\cpc{K}}{d(x,K)^{d-4}},
\end{equation}
and 
\begin{equation}\label{hitting-critical}
c_1 \frac{\cpc{K}}{d(x,K)^{d-2}}\le \mathbb P\big(\cTc^x\cap K\neq \emptyset\big)\le
c_2 \frac{\cpc{K}}{d(x,K)^{d-2}}.
\end{equation}
}
\end{theorem}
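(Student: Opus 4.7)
My strategy would be to reduce Theorem~\ref{theo-zhu} to the approximate last-passage identities
\begin{equation*}
\mathbb P(\cT_c^x \cap K \neq \emptyset) \asymp \sum_{y \in K} g(x,y)\, e_K(y), \qquad \mathbb P(\cTp^x \cap K \neq \emptyset) \asymp \sum_{y \in K} G(x,y)\, e_K(y),
\end{equation*}
valid for all finite nonempty $K$ and all $x\in \Z^d$. Granted these, in the regime $d(x,K) \ge \varepsilon \diam{K}$ the estimates~\eqref{green.rw} and~\eqref{Green.asymp} give $g(x,y) \asymp d(x,K)^{-(d-2)}$ and $G(x,y) \asymp d(x,K)^{-(d-4)}$ uniformly in $y \in K$, so the right-hand sums collapse to constants times $\cpc{K}/d(x,K)^{d-2}$ and $\cpc{K}/d(x,K)^{d-4}$, yielding the two stated estimates.

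\textbf{Upper bounds.} For the critical case, start from Proposition~\ref{prop.hit.Zhu} and decompose by the first-entry vertex $y \in K$:
\begin{equation*}
\mathbb P(\cT_c^x \cap K \neq \emptyset) = \sum_{y \in K} \sum_{\gamma: x \to y,\, \gamma \subset K^c \cup \{y\}} s(\gamma) \prod_{\ell < |\gamma|} b_K(\gamma(\ell)).
\end{equation*}
The crude bound $b_K \le 1$ only gives the simple random walk hitting probability, which is too large (it involves the Newtonian capacity rather than $\cpc{K}$); the gain must come from the fact that $b_K(z)<1$ when $z$ is close to $K$. Combining the shift invariance of the infinite tree $\cT$ (Proposition~\ref{pro:shiftinvariance}) with a last-passage decomposition around $y$ in the spirit of Proposition~\ref{pro:lastpassagedec} shows that the inner sum is comparable to $g(x,y)\, e_K(y)$, giving the critical upper bound. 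The infinite-tree upper bound follows from the spine decomposition $\cTp^x = (X_n)_{n\ge 0} \cup \bigcup_n \til{\cT_c}^{X_n}$: a union bound reduces the hitting event to critical-tree hits at each spine vertex, and the convolution identity~\eqref{convol.G} reshapes the resulting double sum of $g$'s into a single sum involving $G$.

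\textbf{Lower bounds and main obstacle.} For each lower bound I would run a Paley--Zygmund argument on the weighted local time $Z = \sum_{y \in K} e_K(y)\, \ell(y)$, where $\ell$ is the local-time field of $\cT_c^x$ (respectively $\cTp^x$). The first moment is exactly $\sum_{y \in K} g(x,y)\, e_K(y)$ (respectively with $G$), so matches the target. The real work is bounding $\mathbb E[Z^2]$ from above: expanding pair correlations through the branching structure reduces this to reunion estimates of the form
\begin{equation*}
\sup_{z \in \Z^d} \sum_{y \in K} G(z,y)\, e_K(y) \lesssim 1,
\end{equation*}
which is precisely the content of the upper bound in Theorem~\ref{theo.potentiel}. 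Establishing this uniform bound is the main obstacle. In Zhu's original approach one circumvents it by restricting $K$ to a well-chosen ``thick'' subset on which $e_K$ is nearly uniform and proving the reunion estimate by hand before iterating; this extraction step, which is closely related to Corollary~\ref{cor-subset}, is the technical heart of the argument.
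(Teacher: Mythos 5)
First, a point of comparison: the paper itself does not prove this statement at all — it is imported from Zhu \cite{Zhu1,Zhu2} and used as an input throughout — so there is no internal proof to match; your attempt must be judged on its own. Judged that way, it has two genuine gaps. The reduction to the uniform comparison $\mathbb P(\cTc^x\cap K\neq\emptyset)\asymp \sum_{y\in K}g(x,y)e_K(y)$ (and its analogue with $G$) for \emph{all} $x$ assumes something strictly stronger than the theorem: in the regime $d(x,K)\ge\varepsilon\,\diam{K}$ it is literally equivalent to \eqref{hitting-critical}--\eqref{hitting-infinite} (since $g(x,y)\asymp d(x,K)^{2-d}$ uniformly over $y\in K$ there), so nothing has been reduced; and such a uniform two-sided last-passage comparison is exactly what the paper says is missing for branching random walks — its Theorem~\ref{theo.potentiel} delivers only the weaker bounds on $Ge_K$, not a comparison of $Ge_K(x)$ with the hitting probability. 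Your justification of the upper half, namely that Proposition~\ref{prop.hit.Zhu} plus shift invariance plus a decomposition ``in the spirit of Proposition~\ref{pro:lastpassagedec}'' shows the inner sum is comparable to $g(x,y)e_K(y)$, is an assertion rather than an argument: the weights in Proposition~\ref{prop.hit.Zhu} are products of adjoint-tree avoidance probabilities $b_K(\gamma(\ell))$ along the entire path, while $e_K(y)$ is a single avoidance probability for the infinite past tree at the endpoint, and converting one into the other (even up to constants, even only in the far regime) is precisely the technical heart of any such proof — it is what Sections~\ref{sec.Proof1} and~\ref{sec.Lowerbound} of the paper spend all their effort on for a weaker statement.

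Second, your lower bound rests on the reunion estimate $\sup_z\sum_{y\in K}G(z,y)e_K(y)\lesssim 1$, i.e.\ \eqref{Upper.theo}. You cannot borrow this from the paper: the proof of Theorem~\ref{theo.potentiel} invokes Theorem~\ref{theo-zhu} repeatedly (Lemmas~\ref{lem:avoidprob} and~\ref{lem:exittime} and the main argument of Section~\ref{subsec.upper} all use \eqref{hitting-critical} or \eqref{hitting-infinite}), so within this framework your argument is circular; and as a standalone proof the reunion estimate is simply left unproven, with only a vague appeal to a ``thick subset'' extraction attributed to Zhu, which is not carried out and cannot lean on Corollary~\ref{cor-subset}, itself a downstream consequence of the results in question. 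Granted that input, the Paley--Zygmund computation is indeed routine (it parallels the paper's proofs of Corollary~\ref{thm:variationalchar}), but as it stands both pivotal steps — the comparison with $g\,e_K$ for the upper bounds and the uniform bound on $Ge_K$ for the lower bounds — are exactly the hard content and are missing. A smaller issue: in your spine union bound for the infinite-tree upper bound you need to control $\mathbb P(\til{\cT}_c^{\,z}\cap K\neq\emptyset)$ for spine positions $z$ at \emph{all} distances from $K$, including $d(z,K)<\varepsilon\,\diam{K}$ where the critical estimate you have just proved does not apply, so even that step needs an additional argument before \eqref{convol.G} can be used.
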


\begin{remark}\label{rem:boundforadjoint}
\rm{We note that the same estimate as~\eqref{hitting-critical} holds as well for~$\til{\cT}_c^x$. 
}	
\end{remark}


\section{Upper bound on $\|Ge_K\|_\infty$}\label{sec.Proof1}

In this section we give the proof of the first part of Theorem~\ref{theo.potentiel}, namely~\eqref{Upper.theo}. 
Using~\eqref{Upper.theo} we then give a proof of Corollary~\ref{thm:variationalchar} assuming a finite third moment on $\mu$.  
We start with a sketch of the proof in Section~\ref{sec-sketch}.
Then after recalling some standard estimates for a simple random walk in Section~\ref{subsec.RW}, we state and prove some results on hitting times for branching random walks 
in  Section~\ref{subsec.hitBRW}.  The proofs of~\eqref{Upper.theo} and Corollary~\ref{thm:variationalchar} are 
deferred to Sections~\ref{subsec.upper} and~\ref{subsec.cor.var} respectively.

\subsection{Sketch of proof of~\eqref{Upper.theo}}\label{sec-sketch}

First, since we seek an upper bound of $Ge_K(x)$, which is uniform in $K$ and $x$, one can always assume that $x$ is at the origin. 
Now by decomposing $K$ into slices $K=\cup_i K_i$, where $K_i = K\cap \{2^i \le \|x\|\le 2^{i+1}\}$, we find that 
$$Ge_K(0) \asymp\sum_i G(2^i) e_K(K_i),$$
with $e_K(K_i) = \sum_{y\in K_i} e_K(y)$. Thus one needs to estimate $e_K(K_i)$ now. For each $y\in K_i$, we decompose the event $\{\mathcal T^y_- \cap K=\emptyset\}$ according to the last point on $\partial B(0,2^{i+2})$ visited by the spine, and using shift invariance of the tree we arrive at 
$$e_K(K_i) \le \sum_{w\in \partial B(0,2^{i+2})} \mathbb P\big(\mathcal T^w_+\cap K_i\neq \emptyset, \mathcal T_-^w \cap (B_{i+2}\cup K) = \emptyset\big),$$
see~\eqref{eq:equilprobki} below. If the past and future trees were independent, we could separate both events in the probability on the right-hand side. Now for any $w\in \partial B(0,2^{i+2})$, by~\eqref{hitting-infinite} (which holds as well for $\mathcal T_+^w$), one has 
$$\mathbb P\big(\mathcal T^w_+\cap K_i\neq \emptyset)\asymp G(2^i) \cdot \textrm{BCap}(K_i),$$ 
while by Proposition~\ref{lem.bcapballs}, 
$$\sum_{w\in \partial B(0,2^{i+2})} \mathbb P\big(\mathcal T^w_-\cap B_{i+2}= \emptyset\big) = \textrm{BCap}\big(B(0,2^{i+2})\big) \lesssim \frac 1{G(2^i)}.$$
Moreover, at a heuristic level the events $\{\mathcal T^w_- \cap K_j=\emptyset\}$, for $j\ge i+3$, can be considered as being almost independent, and also independent of the event $\{\mathcal T^w_-\cap B_{i+2}= \emptyset\}$, 
since they depend on different pieces of $\mathcal T_-^w$ involving different scales. Thus we may infer that for some constant $c>0$,  
$$\mathbb P\big(\mathcal T_-^w \cap (B_{i+2}\cup K) = \emptyset\big) \lesssim \frac 1{G(2^i)} \cdot \prod_{j\ge i+3} \Big(1 - c\cdot G(2^j) \textrm{BCap}(K_j)\Big). $$ 
Therefore, under these rough independence assumptions, we arrive at 
\begin{equation}\label{goal.sketch}
G(2^i) e_K(K_i) \lesssim G(2^i) \textrm{BCap}(K_i) \cdot \exp\Big(-c\sum_{i+3\le j\le I} G(2^j) \textrm{BCap}(K_j)\Big),
\end{equation}
where $I$ is the maximal index $i$ such that $K_i$ is nonempty, and we conclude by observing that for any sequence $(\varepsilon_i)_{i\ge 0}$,  bounded by one,  
$$\sup_{I\ge 1}\, \sum_{i\le I} \varepsilon_i \cdot \exp\Big(-c \sum_{j\ge i+3} \varepsilon_j\Big), $$
is bounded by a constant that does not depend on the sequence $(\varepsilon_i)_{i\ge 1}$. Now of course, while we will prove that~\eqref{goal.sketch} is indeed correct, 
the whole technical matter of the proof is to deal with the fact that the events above are not really independent. 
In particular it is only once we condition on the positions of the walk on the spine that the past and future can be decorrelated. However, to make the arguments work fine, 
one also needs to ensure that one can place ourselves on the typical event, when the spine spends a time of order $2^{2j}$ in each of the sets $\{2^j\le \|x\|\le 2^{j+1}\}$, which leads to some technical difficulties.


\subsection{Simple random walk estimates}\label{subsec.RW}
We collect here a number of preliminary estimates concerning the simple random walk on $\mathbb Z^d$, that will be used for the proof of~\eqref{Upper.theo}. We write $\mathbb P_x$ for the law of a simple random walk started from $x\in \Z^d$ and $\mathbb E_x$ for the corresponding expectation. 
%
We let $X$ be a simple random walk in $\Z^d$. For $r>0$, we let $\tau_r$ be the first hitting time of $\partial B(0,r)$ and~$\tau_r^+$ the first return time to $B(0,r)$, i.e.\
\[
\tau_r = \inf\{t\geq 0: X_t\in \partial B(0,r)\} \quad \text{ and } \quad \tau_r^+ = \inf\{ t\geq 1: X_t\in \partial B(0,r)\}.
\]

\begin{lemma}\label{cl:exittime}
\rm{
Let $\delta>0$. There exists a positive constant $c=c(\delta)$ such that if  $R>0$, then 
for all $u\in \partial B(0,R)$	we have 
	\[
	\estart{\tau_{R(1+\delta)}\cdot \1(\tau_{R(1+\delta)}<\tau_R^+)}{u} \leq c R. 
	\]
	}
\end{lemma}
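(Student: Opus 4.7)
The plan is to apply optional stopping to the classical martingale $M_n := \|X_n\|^2 - n$ at the stopping time $\tau := \tau_{R(1+\delta)}\wedge \tau_R^+$. This will yield an identity of the form $\mathbb{E}_u[\tau] = \mathbb{E}_u[\|X_\tau\|^2] - \|u\|^2$, reducing the problem to an escape probability estimate.

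First I will verify integrability: since $\tau \leq \tau_{R(1+\delta)}$ and a separate application of OST to $M$ at $\tau_{R(1+\delta)}$ gives $\mathbb{E}_u[\tau_{R(1+\delta)}] = O(R^2)$, dominated convergence justifies OST at $\tau$. Using that $u\in \partial B(0,R)$ forces $R-1 < \|u\| \leq R$, and the bounds $\|X_\tau\|^2 \leq R^2(1+\delta)^2$ on $\{\tau = \tau_{R(1+\delta)}\}$ and $\|X_\tau\|^2 \leq R^2$ on $\{\tau = \tau_R^+\}$, I obtain
\[
\mathbb{E}_u[\tau] \;\leq\; \delta(2+\delta)\, R^2 \, p \;+\; O(R), \qquad p := \mathbb{P}_u\bigl(\tau_{R(1+\delta)} < \tau_R^+\bigr).
\]

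The second step is to prove $p \leq C(\delta)/R$. Because any trajectory on the event $\{\tau_{R(1+\delta)} < \tau_R^+\}$ must step strictly outside $B(0,R)$ at time $1$, this reduces to bounding, for a neighbor $v$ of $u$ with $\|v\| > R$, the probability that the walk from $v$ reaches $\partial B(0,R(1+\delta))$ before hitting $B(0,R)$. Applying OST to the Green's function $g(\cdot,0)$, which is harmonic on $\mathbb{Z}^d\setminus\{0\}$ and satisfies the asymptotic $g(x,0) = C_d\|x\|^{2-d} + O(\|x\|^{-d})$ in dimension $d\geq 3$, yields the desired $1/R$ bound after comparing the values at the inner and outer boundaries of the annulus.

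Combining the two estimates and observing that $\tau_{R(1+\delta)}\cdot\mathbf{1}(\tau_{R(1+\delta)} < \tau_R^+) \leq \tau$ concludes the proof with a constant $c=c(\delta)$ absorbing all $\delta$-dependent factors.

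The main obstacle is the escape bound $p \leq C(\delta)/R$. The leading-order terms in the OST identity for $g(\cdot,0)$ cancel, and the desired $1/R$ behavior must be extracted from the next-order contribution, which requires careful control of the error terms in the Green's function asymptotics. This can either be carried out explicitly or invoked as a classical random walk potential-theoretic estimate (e.g., from Lawler's monograph on intersections of random walks).
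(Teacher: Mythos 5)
Your proof is correct and follows essentially the same route as the paper: optional stopping for the martingale $\|X_n\|^2-n$ stopped on exiting the annulus, which reduces the bound to the probability of leaving through the outer sphere. The only difference is that you make explicit the escape estimate $\mathbb{P}_u\big(\tau_{R(1+\delta)}<\tau_R^+\big)\lesssim_{\delta} 1/R$ via the Green's function (a classical estimate the paper's one-line proof leaves implicit), and your treatment of that step is sound.
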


\begin{proof}[\bf Proof]

 Let $\tau = \tau_{R(1+\delta)}\wedge \tau_R$. Applying the optional stopping theorem to the martingale $(\norm{X_n}^2 -n)_{n\geq 0}$ we obtain that there exists a positive constant $c=c(\delta)$ such that 
\[
\estart{\tau_{R(1+\delta)}\cdot \1(\tau_{R(1+\delta)}<\tau_R^+)}{u} \leq 1+ \sup_{\substack{v\notin B(0,R)\\ v\sim u}}\estart{\tau}{v} \leq c R
\]
and this concludes the proof.
\end{proof}

\begin{lemma}\label{cl:estimforsrw}
\rm{
There exists $c>0$ such that the following holds. Let $R>0$, $u\in \partial B(0,R)$ and $v\in \partial B(0,2R)$. Then 
\[
\prstart{X_{\tau_R}=u, \tau_R<cR^2}{v} \leq \frac{1}{2}\cdot \prstart{X_{\tau_R}=u}{v}.
\]
}
\end{lemma}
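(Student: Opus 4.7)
My plan is to decompose the walk at the intermediate stopping time $\sigma:=\tau_{3R/2}$, the first hitting time of $\partial B(0,3R/2)$, and apply the strong Markov property there. Since $v\in\partial B(0,2R)$ satisfies $\|v\|\geq 2R-1$, for all $R$ sufficiently large $v$ lies strictly outside $B(0,3R/2)$, and every path from $v$ to $\partial B(0,R)\subseteq B(0,3R/2)$ must cross $\partial B(0,3R/2)$ first. Hence $\sigma<\tau_R$ $\mathbb P_v$-almost surely, and in particular $\{\tau_R<cR^2\}\subseteq\{\sigma<cR^2\}$. Applying the strong Markov property at $\sigma$ gives
\[
\prstart{X_{\tau_R}=u,\,\tau_R<cR^2}{v}\leq \sum_{w\in\partial B(0,3R/2)} \prstart{X_\sigma=w,\,\sigma<cR^2}{v}\cdot\prstart{X_{\tau_R}=u}{w}.
\]

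Next I would compare $\prstart{X_{\tau_R}=u}{w}$ with $\prstart{X_{\tau_R}=u}{v}$. The function $h(x):=\prstart{X_{\tau_R}=u}{x}$ is nonnegative and harmonic on $\Z^d\setminus B(0,R)$, and both $v$ and $w$ lie in the annulus $\{R\leq \|x\|\leq 2R\}$ at Euclidean distance of order $R$ from $\partial B(0,R)$. A standard discrete Harnack inequality (equivalently, the classical Poisson-kernel asymptotics for SRW on $\mathbb Z^d$, which give that both $h(v)$ and $h(w)$ are of order $R^{-(d-1)}$ up to dimensional constants) yields an absolute constant $C_1$ such that $h(w)\leq C_1 h(v)$ uniformly in $u,v,w,R$. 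Substituting into the previous display,
\[
\prstart{X_{\tau_R}=u,\,\tau_R<cR^2}{v}\leq C_1\,\prstart{X_{\tau_R}=u}{v}\cdot\prstart{\sigma<cR^2}{v}.
\]

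It remains to control $\prstart{\sigma<cR^2}{v}$. From $v\in\partial B(0,2R)$ the walk must move at least $R/2$ in Euclidean norm to reach $\partial B(0,3R/2)$, and a coordinate-wise reflection-principle bound (or Doob's inequality applied to the martingales $\exp(\lambda (X_k^{(i)}-v^{(i)})-k\lambda^2/2)$) yields
\[
\prstart{\sigma<cR^2}{v}\leq \prstart{\max_{k\leq cR^2}\|X_k-v\|\geq R/2}{v}\leq 2d\exp(-c_0/c)
\]
for some absolute constant $c_0>0$. Choosing $c>0$ small enough that $2dC_1\exp(-c_0/c)\leq 1/2$ finishes the proof.

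The main obstacle is the Harnack-type comparison $h(w)\leq C_1 h(v)$: it is the only nontrivial external input, and although classical in discrete potential theory, it deserves a precise reference or a short standalone argument, e.g.\ chaining the elliptic Harnack inequality across finitely many overlapping balls inside the annulus, or directly appealing to the fact that the hitting distribution from a point at distance $\asymp R$ outside $B(0,R)$ is $\asymp R^{-(d-1)}$ with universal constants. The remaining ingredients — the strong Markov property and the Gaussian maximal inequality — are entirely routine.
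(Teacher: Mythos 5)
Your proposal is correct and follows essentially the same route as the paper: a strong Markov decomposition at the first hitting of $\partial B(0,3R/2)$, the uniform Poisson-kernel bounds $\prstart{X_{\tau_R}=u}{w}\lesssim R^{1-d}\lesssim \prstart{X_{\tau_R}=u}{v}$ (the paper cites the argument of \cite[Lemma~6.3.7]{LL10} for exactly this comparison), a bound of the form $\exp(-c_0/c)$ on the probability of travelling distance $R/2$ in time $cR^2$, and then choosing $c$ small.
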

\begin{proof}[\bf Proof]
Let $\tau=\inf\{t\geq 0: X_t\in \partial B(0,3R/2)\}$ be the first hitting time of $\partial B(0,3R/2)$. Let $c$ be a positive constant to be determined later. By the strong Markov property applied to $\tau$ we have 
\begin{align*}
	\prstart{X_{\tau_R}=u, \tau_R<cR^2}{v} &= \sum_{w\in \partial B(0,3R/2)} \sum_{n\leq \lfloor cR^2\rfloor}\sum_{s<n} \prstart{X_s=w, \tau=s}{v} \prstart{X_{\tau_R}=u, \tau_R=n-s}{w} \\ 
	&\leq  \sum_{w\in \partial B(0,3R/2)} \sum_{s<\lfloor cR^2\rfloor} \prstart{X_s=w, \tau=s}{v} \prstart{X_{\tau_R}=u}{w}.
\end{align*}
	By a proof similar to~\cite[Lemma~6.3.7]{LL10}, we get that there exist universal constants $c_1,c_2$ so that for all $w\in \partial B(0,3R/2)$ we have 	\begin{align}\label{eq:uptoconst}
	\prstart{X_{\tau_R}=u}{w}\leq 
	\frac{c_2}{R^{d-1}} \quad \text{ and } \quad \prstart{X_{\tau_R}=u}{v}\geq 
	\frac{c_1}{R^{d-1}}.
	\end{align}
	Therefore, plugging this above gives 
	\begin{align*}
	\prstart{X_{\tau_R}=u, \tau_R<cR^2}{v} \leq \frac{c_2}{R^{d-1}} \cdot \prstart{\tau<\lfloor cR^2\rfloor}{v}.
	\end{align*}
	On the event $\{\tau<\lfloor cR^2\rfloor\}$, the walk must travel distance $R/2$ in time less than $\lfloor cR^2\rfloor$, and hence this has probability less than $\exp(-c_3/c)$, where $c_3$ is a positive constant. Plugging this  above gives 
	\begin{align*}
		\prstart{X_{\tau_R}=u, \tau_R<cR^2}{v} \leq \frac{c_2}{R^{d-1}} \cdot \exp(-c_3/c).	\end{align*}
	Taking now $c$ sufficiently small so that $c_2\cdot \exp(-c_3/c)\leq c_1/2$ and using~\eqref{eq:uptoconst} we conclude
	\[
	\prstart{X_{\tau_R}=u, \tau_R<cR^2}{v} \leq \frac{1}{2}\cdot\frac{c_1}{R^{d-1}}\leq \frac{1}{2}\cdot \prstart{X_{\tau_R}=u}{v}, 
	\]
	and this finishes the proof.
\end{proof}

\begin{lemma}\label{cl:length}
	\rm{There exists a positive constant $c$ so that the following holds. Let $R>0$, $u\in \partial B(0,R)$ and $v\in \partial B(0,2R)$. Then we have 
	\[
	\sum_{n\ge 0} n\cdot  \prstart{X_n=v, \tau_R^+>n}{u} \leq c\cdot  R^2 \cdot \sum_{n\ge 0} \prstart{X_n=v, \tau_R^+>n}{u}.
		\]
		}
\end{lemma}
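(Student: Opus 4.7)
The plan is to apply time-reversal to reformulate the bound as an expected first-hitting time estimate, then split the time axis into a short and a long regime, combining Gaussian heat-kernel bounds with pointwise estimates on a harmonic function.

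By the symmetry of the SRW transition kernel, reversing time in every trajectory gives
\[
\prstart{X_n=v,\tau_R^+>n}{u}=\prstart{X_n=u,\tau_R=n}{v},
\]
so, writing $\tau:=\tau_R$ under $\mathbb{P}_v$, the claim is equivalent to
\[
\estart{\tau\cdot\1(X_\tau=u)}{v}\leq cR^2\prstart{X_\tau=u}{v}.
\]
Using $\tau=\sum_{s\geq 0}\1(\tau>s)$ and the strong Markov property, the left-hand side equals $\sum_{s\geq 0}\estart{\1(\tau>s)\,h(X_s)}{v}$, where $h(w):=\prstart{X_\tau=u}{w}$ is the bounded harmonic function on $B(0,R)^c$ with boundary value $\1(\cdot=u)$ on $\partial B(0,R)$. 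Since $\prstart{X_\tau=u}{v}\asymp R^{1-d}$ by \eqref{eq:uptoconst}, the target bound becomes $\estart{\tau\cdot\1(X_\tau=u)}{v}\lesssim R^{3-d}$.

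I would split this sum at $s_0:=CR^2$ for a large constant $C$. For $s\leq s_0$, the trivial bound $\estart{\1(\tau>s)\,h(X_s)}{v}=\prstart{\tau>s,X_\tau=u}{v}\leq h(v)$ immediately gives a contribution $\lesssim R^2 h(v)$, which is of the desired order. For the tail $s>s_0$, I would combine the Gaussian upper bound $p_s(v,w)\lesssim s^{-d/2}$ (valid once $s\gtrsim R^2$) with two complementary estimates on $h$: for $\|w\|\geq 2R$, a Harnack-type argument combining the hitting probability $(R/\|w\|)^{d-2}$ with the uniform-on-boundary estimate of order $R^{1-d}$ already used in the proof of Lemma~\ref{cl:estimforsrw} gives $h(w)\lesssim R^{-1}\|w\|^{2-d}$, leading after summation in $s$ to a tail contribution $\lesssim R^{-1}\sum_{s>s_0}s^{-(d-2)/2}\lesssim R^{3-d}$; and on the annulus $R<\|w\|<2R$ a discrete Poisson-kernel bound of the form $h(w)\lesssim(\|w\|-R)/\|w-u\|^d$ gives $\sum_{w}h(w)\lesssim R$ over the annulus, which together with $p_s(v,w)\lesssim s^{-d/2}$ also yields a tail contribution $\lesssim R\sum_{s>s_0}s^{-d/2}\lesssim R^{3-d}$.

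The main obstacle I anticipate is the Poisson-kernel type bound on $h$ on the inner annulus. Far from $\partial B(0,R)$ the function $h$ is of uniform order $R^{1-d}$ by Harnack, but near its ``pole'' $u$ it becomes of order one, and the average over the annulus must be controlled carefully; this amounts to a discrete analogue of the classical Poisson kernel for the exterior of a Euclidean ball (equivalently, an instance of the boundary Harnack principle). Once this estimate is in place, summing the two regimes and invoking $\prstart{X_\tau=u}{v}\asymp R^{1-d}$ gives $\estart{\tau\cdot\1(X_\tau=u)}{v}\lesssim R^{3-d}\asymp R^2\prstart{X_\tau=u}{v}$, which is the claim.
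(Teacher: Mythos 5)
Your opening reduction is the same as the paper's (time reversal plus $\prstart{X_{\tau_R}=u}{v}\asymp R^{1-d}$, so that the goal becomes $\estart{\tau_R\,\1(X_{\tau_R}=u)}{v}\lesssim R^{3-d}$), but from there you take a genuinely different route: writing $\tau=\sum_{s\ge0}\1(\tau>s)$, cutting at $s_0\asymp R^2$, and playing heat-kernel decay against pointwise bounds on the harmonic function $h(w)=\prstart{X_{\tau_R}=u}{w}$. The numerology checks out in every regime: short times give $\lesssim R^2 h(v)\asymp R^{3-d}$; for $\|w\|\ge 2R$ the bound $h(w)\lesssim R^{-1}\|w\|^{2-d}$ does follow from the strong Markov property at $\partial B(0,3R/2)$ together with \eqref{eq:uptoconst}, and the sums over $w$ and $s>s_0$ indeed produce $R^{-1}\sum_{s>s_0}s^{-(d-2)/2}\lesssim R^{3-d}$ (here $d\ge5$ is used). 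The only real gap is the one you flag yourself: the exterior Poisson-kernel/boundary-Harnack estimate $h(w)\lesssim((\|w\|-R)\vee 1)/\|w-u\|^{d}$ on the annulus is true but nontrivial, is not among the inputs the paper sets up, and you leave it unproved. It can be bypassed inside your own scheme: by the same reversal, $\sum_{R<\|w\|<2R}h(w)$ equals the expected time spent in $B(0,2R)\setminus B(0,R)$ before $\tau_R^+$ by the walk started at $u$, and this is $\lesssim R$ by splitting at $\tau_{2R}\wedge\tau_R^+$ (optional stopping for $\|X_n\|^2-n$ as in Lemma~\ref{cl:exittime} for the piece before this time, then the gambler's-ruin bound $\prstart{\tau_{2R}<\tau_R^+}{u}\lesssim 1/R$ times the crude bound $\sum_{w\in B(0,2R)\setminus B(0,R)}g(z,w)\lesssim R^2$ for the rest), which needs no refined boundary behaviour.

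For comparison, the paper's proof avoids boundary-refined estimates altogether: it applies the strong Markov property at the first visit to $\partial B(0,3R/2)$ before returning to $B(0,R)$, bounds $\sum_{n>s}n\,\prstart{X_{n-s}=v}{w}\asymp R^{4-d}+s\,R^{2-d}$ by the local CLT, and concludes with Lemma~\ref{cl:exittime} and $\prstart{\tau_{3R/2}<\tau_R^+}{u}\lesssim 1/R$. That argument is shorter and uses only \eqref{eq:uptoconst} and the exit-time lemma, whereas your route, once the annulus estimate is either proved or replaced as above, is a valid alternative that makes the dependence on the time scale $R^2$ more transparent.
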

\begin{proof}[\bf Proof]
First of all using a reversal argument we see that 
\[
\sum_{n\geq 0} \prstart{X_n=v, \tau_R^+>n}{u}= \prstart{X_{\tau_R}=u}{v}.
\]
Since $\prstart{X_{\tau_R}=u}{v}\asymp R^{1-d}$ (see~\cite[Lemma~6.3.7]{LL10}), to prove the claim it suffices to show that there exists a positive constant $c$ so that 
\begin{align}\label{eq:goalforclaim}
\sum_{n} n\cdot  \prstart{X_n=v, \tau_R^+>n}{u} \leq \frac{c}{R^{d-3}}.
\end{align}
Let $\tau$ be the first hitting time of $\partial B(0,3R/2)$. Then by the strong Markov property applied to $\tau$ we have 
\begin{align*}
	\sum_{n\geq 0} &n\cdot  \prstart{X_n=v, \tau_R^+>n}{u}  \\&= \sum_{n\geq 0} \sum_{w\in \partial B(0,3R/2)}\sum_{s<n} n \cdot \prstart{\tau=s, X_s=w, \tau<\tau_R^+}{u} \prstart{X_{n-s}=v, \tau_R>n-s}{w} \\
	&=\sum_{w\in \partial B(0,3R/2)} \sum_{s\geq 0} \prstart{\tau=s, X_s=w, \tau<\tau_R^+}{u}\left(\sum_{n>s} n\cdot \prstart{X_{n-s}=v, \tau_R>n-s}{w} \right)\\
	&\leq \sum_{w\in \partial B(0,3R/2)} \sum_{s\geq 0} \prstart{\tau=s, X_s=w, \tau<\tau_R^+}{u}\left(\sum_{n>s} n\cdot \prstart{X_{n-s}=v}{w} \right).
\end{align*}
Using the local central limit theorem we now obtain
\[
\sum_{n>s} n\cdot \prstart{X_{n-s}=v}{w} = \sum_{n=1}^{\infty} n\cdot   \prstart{X_{n}=v}{w} + s \cdot \sum_{n=1}^{\infty} \prstart{X_{n}=v}{w} \asymp \frac{1}{R^{d-4}} + \frac{s}{R^{d-2}} .
\] 
Plugging this above we deduce
\begin{align*}
	\sum_{n\geq 0} n\cdot  \prstart{X_n=v, \tau_R^+>n}{u}  &\lesssim \sum_{w\in \partial B(0,3R/2)} \sum_{s \geq 0}\prstart{\tau=s, X_s=w, \tau<\tau_R^+}{u} \cdot \left( \frac{1}{R^{d-4}} + \frac{s}{R^{d-2}}\right) \\ &=
	\frac{1}{R^{d-4}} \cdot \prstart{\tau<\tau_R^+}{u} + \frac{1}{R^{d-2}}\cdot \estart{\tau\cdot \1(\tau<\tau_R^+)}{u} \lesssim \frac{1}{R^{d-3}},
\end{align*}
where we used that Lemma~\ref{cl:exittime} for the last inequality. This now concludes the proof.
\end{proof}


\subsection{Hitting times for branching random walks}\label{subsec.hitBRW}

For a path $\gamma :\{0,\dots,N\} \to \mathbb Z^d$, and $A\subseteq \mathbb Z^d$, we write 
$\gamma \subseteq A$, if $\gamma(\ell) \in A$, for all $\ell \ge 1$ (note that we allow the starting point of the path to be in $A^c$). Recall the notation introduced at the beginning of Section~\ref{sec.hit}.

\begin{lemma}\label{lem:avoidprob}
\rm{	There exists a positive constant $c$ so that for any $R\ge 1$ any $K\subseteq B(0,R/2)$, any $u\in \partial B(0,R)$ and $v\in \partial B(0,2R)$, one has 
	\[
	\sum_{\substack{\gamma: u\to v\\ \gamma\subseteq B(0,R)^c}} s(\gamma) \prod_{\ell=0}^{|\gamma|} b_{K}(\gamma(\ell)) \leq \exp\left(-c\frac{\cpc{K}}{R^{d-4}} \right)
\cdot \sum_{\substack{\gamma: u\to v\\ \gamma\subseteq B(0,R)^c}} s(\gamma).	\]
}
\end{lemma}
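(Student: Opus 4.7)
The idea is to convert the product $\prod_\ell b_K(\gamma(\ell))$ into an exponential via $1-t\le e^{-t}$, transfer the path-sum to an expectation of a forward simple random walk via path reversal, and then exploit the fact that a typical trajectory from $u$ to $v$ avoiding $B(0,R)$ takes time of order $R^2$ while remaining in a region of diameter of order $R$, producing a sum $\sum_\ell\|\gamma(\ell)\|^{-(d-2)}\gtrsim R^{4-d}$ that yields the desired factor $\exp(-c\cpc{K}/R^{d-4})$.

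Concretely, since $K\subseteq B(0,R/2)$ gives $\mathrm{diam}(K)\le R$ and $d(x,K)\ge R/2\ge \tfrac{1}{2}\mathrm{diam}(K)$ for every $x$ with $\|x\|\ge R$, Theorem~\ref{theo-zhu} combined with Remark~\ref{rem:boundforadjoint} gives $1-b_K(x)\gtrsim \cpc{K}/\|x\|^{d-2}$, and then $1-t\le e^{-t}$ yields
\[
\prod_{\ell=0}^{|\gamma|}b_K(\gamma(\ell))\le \exp\Big(-c_1\cpc{K}\sum_{\ell=0}^{|\gamma|}\|\gamma(\ell)\|^{-(d-2)}\Big).
\]
Reversing each path identifies $\{\gamma:u\to v,\,\gamma\subseteq B(0,R)^c\}$ with the trajectories of a simple random walk $(X_n)$ starting at $v$ realising the event $A:=\{X_{\tau_R}=u,\,T_v^+>\tau_R\}$, so the lemma reduces to
\[
\mathbb E_v\!\left[\1_A\exp(-c_1\cpc{K}\,Y_{\tau_R})\right]\le \exp(-c\cpc{K}/R^{d-4})\,\mathbb P_v(A), \qquad Y_n:=\sum_{\ell=0}^n\|X_\ell\|^{-(d-2)}.
\]

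The crucial step is to introduce the good event $B:=A\cap\{\tau_R\ge c_2R^2\}\cap\{\tau_{MR}>\tau_R\}$ for suitably small $c_2$ and large $M$. On $B$ we have $\|X_\ell\|\le MR$ throughout $[0,\tau_R]$, hence $Y_{\tau_R}\ge c_2M^{2-d}R^{4-d}$. I claim that $\mathbb P_v(B)\ge \tfrac{1}{2}\mathbb P_v(A)$. Lemma~\ref{cl:estimforsrw} (whose constant can be made arbitrarily small by shrinking the time threshold in its proof) handles $\mathbb P_v(X_{\tau_R}=u,\tau_R<c_2R^2)$. For $\mathbb P_v(X_{\tau_R}=u,\tau_{MR}<\tau_R)$, applying the strong Markov property at $\tau_{MR}$ and the standard estimate $\mathbb P_w(X_{\tau_R}=u)\lesssim M^{2-d}R^{1-d}$ for $w\in\partial B(0,MR)$ (combining $\mathbb P_w(\tau_R<\infty)\asymp M^{2-d}$ with the uniform-order harmonic measure on $\partial B(0,R)$) together with $\mathbb P_v(X_{\tau_R}=u)\asymp R^{1-d}$ gives a bound $\lesssim M^{2-d}\mathbb P_v(X_{\tau_R}=u)$, which is small for large $M$. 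Finally, transience in $d\ge 5$ gives $\mathbb P_v(T_v^+<\infty)=1-1/g(0)$, so a decomposition at $T_v^+$ yields $\mathbb P_v(A)=(1-\mathbb P_v(T_v^+<\tau_R))\mathbb P_v(X_{\tau_R}=u)\ge g(0)^{-1}\mathbb P_v(X_{\tau_R}=u)$; for appropriate $c_2,M$ the two bad-event probabilities then sum to at most $\tfrac{1}{2}\mathbb P_v(A)$.

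Splitting the expectation,
\[
\mathbb E_v\!\left[\1_A e^{-c_1\cpc{K}Y_{\tau_R}}\right]\le \mathbb P_v(A\cap B^c)+\mathbb P_v(B)e^{-c_3\cpc{K}/R^{d-4}}\le \mathbb P_v(A)\!\left(\tfrac{1}{2}+\tfrac{1}{2}e^{-c_3\cpc{K}/R^{d-4}}\right),
\]
and since Proposition~\ref{lem.bcapballs} gives $\cpc{K}\le\cpc{B(0,R/2)}\lesssim R^{d-4}$, the exponent $\beta:=c_3\cpc{K}/R^{d-4}$ lies in a fixed bounded interval on which $\tfrac{1}{2}+\tfrac{1}{2}e^{-\beta}\le e^{-c\beta}$ for some $c>0$ (using the fact that $1-e^{-\beta}\gtrsim\beta$ on bounded intervals). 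The \textbf{main obstacle} is the harmonic-measure bound on $\mathbb P_v(X_{\tau_R}=u,\tau_{MR}<\tau_R)$ in the previous paragraph: this does not follow directly from Lemmas~\ref{cl:exittime}--\ref{cl:length} and requires a separate simple random-walk computation, although it is a consequence of standard potential theory.
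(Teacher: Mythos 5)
Your proof is correct in substance but takes a genuinely different route from the paper's. The paper bounds every factor uniformly, $b_K(\gamma(\ell))\le 1-c\,\cpc{K}/R^{d-2}$, so the product is controlled by $\exp(-c'|\gamma|\cpc{K}/R^{d-2})$; it then splits the path sum according to whether $|\gamma|\le c_1R^2$ or not, disposes of the short paths by time reversal and Lemma~\ref{cl:estimforsrw}, and finishes with the same linearisation of the exponential on the bounded range $\cpc{K}/R^{d-4}\lesssim 1$ that you use. You instead keep the distance dependence, $1-b_K(x)\gtrsim\cpc{K}\,\|x\|^{-(d-2)}$, reverse the whole weighted sum into an expectation of the form $\mathbb{E}_v\big[\1(X_{\tau_R}=u)\,e^{-c_1\cpc{K}Y_{\tau_R}}\big]$, and extract the factor by showing that on at least half of the relevant event the reversed walk takes time at least $c_2R^2$ while confined to $B(0,MR)$, so that $Y_{\tau_R}\gtrsim R^{4-d}$. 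The price is one extra (standard) estimate, $\sup_{w\in\partial B(0,MR)}\mathbb{P}_w(X_{\tau_R}=u)\lesssim M^{2-d}R^{1-d}$, which you correctly flag; it follows, e.g., by conditioning on the entry point of $\partial B(0,2R)$, using $\mathbb{P}_w(\tau_{2R}<\infty)\asymp M^{2-d}$ and the bound of type~\eqref{eq:uptoconst}. The gain is that your confinement event $\{\tau_{MR}>\tau_R\}$ honestly handles paths that wander far from the origin, where the uniform bound $b_K(\gamma(\ell))\le 1-c\,\cpc{K}/R^{d-2}$ used in the paper is only justified for points at distance of order $R$ from $K$; in that respect your version is the more careful one, and both proofs end with the same mechanism (half of the mass of the path sum carries the exponential factor).

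One slip to fix in the write-up: with the paper's convention, $\gamma:u\to v$ between two \emph{points} only prescribes the endpoints, so the forward paths may visit $v$ before terminating; after reversal the unweighted sum is therefore exactly $\mathbb{P}_v(X_{\tau_R}=u)$ (first entry of $B(0,R)$ occurring at $u$), \emph{without} your extra condition $T_v^+>\tau_R$. As written, your reduction to $A=\{X_{\tau_R}=u,\ T_v^+>\tau_R\}$ identifies both sides of the lemma with quantities on a strictly smaller event, so proving the inequality on $A$ does not formally yield the lemma. The fix is immediate: drop the $T_v^+$ condition everywhere (your two bad-event bounds are in fact already stated for $\{X_{\tau_R}=u\}$), after which the transience/last-exit comparison $\mathbb{P}_v(A)\ge g(0)^{-1}\mathbb{P}_v(X_{\tau_R}=u)$ becomes unnecessary, and the rest of your argument, including the rearrangement using $B\subseteq A$ and the elementary bound $\tfrac12+\tfrac12 e^{-\beta}\le e^{-c\beta}$ for $\beta$ in a bounded interval (valid since $\cpc{K}\lesssim R^{d-4}$), goes through unchanged.
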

\begin{proof}[\bf Proof]
Since $\gamma\subseteq B(0,R)^c$ and $K\subseteq B(0,R/2)$, using~\eqref{hitting-critical} and Remark~\ref{rem:boundforadjoint} we get that for a positive constant $c$ and for all~$\ell\ge 0$
\[
b_K(\gamma(\ell)) \leq 1-c\frac{\cpc{K}}{R^{d-2}}.
\]
	So taking the product over all $\ell$ we get for a positive constant $c'$
	\begin{align*}
		\prod_{\ell=0}^{|\gamma|}b_K(\gamma(\ell)) \leq \exp\left(-c'|\gamma| \frac{\cpc{K}}{R^{d-2}}\right).
	\end{align*}
	Thus we deduce 
	\begin{align*}
		\sum_{\substack{\gamma: u\to v\\ \gamma\subseteq B(0,R)^c}} s(\gamma)\prod_{\ell=0}^{|\gamma|}b_K(\gamma(\ell)) \leq \sum_{\substack{\gamma: u\to v\\ \gamma\subseteq B(0,R)^c}} s(\gamma) \exp\left(-c'|\gamma| \frac{\cpc{K}}{R^{d-2}}\right).
	\end{align*}
	Let $c_1$ be a positive constant to be determined later. We now show that the last sum above is upper bounded by the sum where we restrict $\gamma$ to have length at least $c_1R^2$. First we write 
	\begin{align*}
		&\sum_{\substack{\gamma: u\to v\\ \gamma\subseteq B(0,R)^c}} s(\gamma) \exp\left(-c'|\gamma| \frac{\cpc{K}}{R^{d-2}}\right) 
		&\leq \sum_{\substack{\gamma: u\to v\\ \gamma\subseteq B(0,R)^c\\ |\gamma|\leq c_1R^2}} s(\gamma) + \sum_{\substack{\gamma: u\to v\\ \gamma\subseteq B(0,R)^c\\ |\gamma|>c_1R^2}} s(\gamma) \exp\left(-c'c_1\frac{\cpc{K}}{R^{d-4}}\right).
		\end{align*}
	Since $\cpc{K}\leq C R^{d-4}$ for a positive constant $C$ we can take $c_1$ sufficiently small so that
			\[
	\exp\left(-c'c_1\frac{\cpc{K}}{R^{d-4}}\right) \leq 1-\frac{c'c_1}{2}\cdot \frac{\cpc{K}}{R^{d-4}},
	\]
and hence plugging this above we obtain
\begin{align*}
	&\sum_{\substack{\gamma: u\to v\\ \gamma\subseteq B(0,R)^c}} s(\gamma) \exp\left(-c'|\gamma| \frac{\cpc{K}}{R^{d-2}}\right)\\
	&\leq  \left( 1-\frac{c'c_1\cpc{K}}{2R^{d-4}} \right) \cdot \sum_{\substack{\gamma: u\to v\\ \gamma\subseteq B(0,R)^c}} s(\gamma) + \frac{c'c_1\cpc{K}}{2R^{d-4}}  \cdot \sum_{\substack{\gamma: u\to v\\ \gamma\subseteq B(0,R)^c\\ |\gamma|\leq c_1R^2}} s(\gamma).
\end{align*}
		Taking $c_1$ even smaller, applying a time reversal and using Lemma~\ref{cl:estimforsrw} we upper bound the quantity above by 
		\begin{align*}
\left( 1-  \frac{c'c_1\cpc{K}}{4R^{d-4}} \right)   \cdot \sum_{\substack{\gamma: u\to v\\ \gamma\subseteq B(0,R)^c}} s(\gamma) \leq \exp\left(-c'c_1\frac{\cpc{K}}{4R^{d-4}}\right) \cdot \sum_{\substack{\gamma: u\to v\\ \gamma\subseteq B(0,R)^c}} s(\gamma),
		\end{align*}
		and this concludes the proof. 			\end{proof}

\begin{lemma}\label{lem:exittime}
	\rm{
	For each $R\geq 1$ let $\tau_R^u$ be the generation of the first hitting vertex of $B(0,R)$ by $\cT^u_c$.
		There exists $C>0$, such that, for any $R\ge 1$,
	 and any $u\in \partial B(0,2R)$, we have 
	\[
	\estart{\tau_R^u\cdot \1(\tau_R^u<\infty)}{} \leq C.
	\]
	}
\end{lemma}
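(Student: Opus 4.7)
The starting point is Zhu's first-hit formula, Proposition~\ref{prop.hit.Zhu}, which gives
\[
\estart{\tau_R^u\cdot\1(\tau_R^u<\infty)}{}=\mathbb E_u\!\left[\sigma\cdot\prod_{\ell=0}^{\sigma-1}b_{B(0,R)}(X_\ell)\cdot\1(\sigma<\infty)\right],
\]
where $X$ is a simple random walk started at $u$ and $\sigma=\inf\{n\ge 0:X_n\in B(0,R)\}$ is its first hitting time of $B(0,R)$. Writing $\sigma=\sum_{k\ge 0}\1(k<\sigma)$ and using the strong Markov property of $X$ at time $k$ (together with Proposition~\ref{prop.hit.Zhu} applied from $X_k$ onwards to re-identify the post-$k$ contribution as $\mathbb P(\cT_c^{X_k}\cap B(0,R)\ne\emptyset)$), this rewrites as
\[
\estart{\tau_R^u\cdot\1(\tau_R^u<\infty)}{}=\sum_{w\in B(0,R)^c}\widetilde G(u,w)\,H(w),
\]
where $H(w):=\mathbb P(\cT_c^w\cap B(0,R)\ne\emptyset)$ and $\widetilde G(u,w):=\mathbb E_u\bigl[\sum_{k<\sigma}\1(X_k=w)\prod_{\ell<k}b_{B(0,R)}(X_\ell)\bigr]$ is a $b$-weighted Green's function of $X$ killed on entering $B(0,R)$.

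For $H$, Theorem~\ref{theo-zhu}, Remark~\ref{rem:boundforadjoint}, and $\cpc{B(0,R)}\asymp R^{d-4}$ from Proposition~\ref{lem.bcapballs} yield $H(w)\lesssim \min\bigl(1,R^{d-4}/d(w,B(0,R))^{d-2}\bigr)$. To control $\widetilde G$ I would split the trajectory of $X$ into excursions between the dyadic shells $\partial B(0,2^jR)$ for $j\ge 1$ (the outer regime) and into nested excursions between $\partial B(0,R(1+2^{-i}))$ for $i\ge 0$ (the final descent inside $B(0,2R)\setminus B(0,R)$). For each outer scale $j\ge 1$, Lemma~\ref{lem:avoidprob} applies with the lemma's parameter radius taken equal to $2^jR$ and the avoidance set equal to $B(0,R)\subseteq B(0,(2^jR)/2)$, giving a multiplicative $b$-gain at most $\exp(-c\,\cpc{B(0,R)}/(2^jR)^{d-4})\le\exp(-c\cdot 2^{-j(d-4)})$ per excursion. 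Combining this with Lemma~\ref{cl:length} (which gives expected length $O((2^jR)^2)$ for each scale-$j$ excursion) and with the fact that, by transience of $X$ in $d\ge 5$, the number of returns to $\partial B(0,2^jR)$ before $\sigma$ is stochastically dominated by a geometric variable of $O(1)$ mean, the contribution of the outer regime to $\sum_w\widetilde G(u,w)H(w)$ sums geometrically in $j$ and is of order $O(1)$.

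For the final descent Lemma~\ref{lem:avoidprob} does not directly apply, and one argues by hand: the walk visits each shell $\partial B(0,R(1+2^{-i}))$, $i\ge 0$, only a geometric number of times with mean bounded uniformly in $R$ (by standard simple random walk potential theory together with Lemma~\ref{cl:exittime}); each such visit contributes a $b$-factor at most $1-c$ for a universal $c>0$, since the adjoint tree starting at distance $O(2^{-i}R)$ from $B(0,R)$ hits $B(0,R)$ with uniformly positive probability; and each nested excursion has expected length $O((2^{-i}R)^2)$ by standard exit-time estimates. Summing over $i\ge 0$ bounds the inner contribution to $\sum_w\widetilde G(u,w)H(w)$ by a constant, and combining with the outer estimate concludes the proof.

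\textbf{Main obstacle.} The delicate point is that a naive bound on the descent through $B(0,2R)\setminus B(0,R)$ gives $O(R^2)$ rather than $O(1)$; the missing factor $R^{-2}$ must be extracted through a careful scale-by-scale balancing of excursion lengths against multiplicative $b$-gains, mirroring the identity $\mathbb P(\tau_R^u<\infty)\asymp \cpc{B(0,R)}/R^{d-2}=R^{-2}$ coming from Theorem~\ref{theo-zhu}.
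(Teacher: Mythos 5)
Your reduction is correct as far as it goes: by Proposition~\ref{prop.hit.Zhu} and the strong Markov property one indeed has $\estart{\tau_R^u\1(\tau_R^u<\infty)}{}=\sum_{w\notin B(0,R)}\widetilde G(u,w)H(w)$ with your $b$-weighted killed Green's function, and the outer regime is unproblematic (there you do not even need the gain from Lemma~\ref{lem:avoidprob}: summing $g(u,w)H(w)$ over the shell at radius $2^jR$ already gives $O(2^{-j(d-4)})$ via Theorem~\ref{theo-zhu}). The genuine gap is in the inner descent, which is precisely where the missing factor $R^{-2}$ must be produced, and both quantitative inputs you invoke there fail. First, it is false that each visit to $\partial B(0,R(1+2^{-i}))$ contributes a $b$-factor at most $1-c$ with a universal $c>0$: $1-b_{B(0,R)}(x)$ is the probability that an adjoint critical tree started at distance $s=2^{-i}R$ from the ball hits it, which is of order $s^{-2}$ (the tree must survive of order $s^2$ generations) and in particular tends to zero; a constant gain per scale only appears after accumulating of order $s^2$ steps at distance $s$, so you would need a lower bound on the occupation time of each thin annulus (the analogue of Lemma~\ref{cl:estimforsrw} at every scale, or a restriction to long crossings as inside Lemma~\ref{lem:avoidprob}), which you neither state nor prove, and which Lemma~\ref{lem:avoidprob} does not supply in this geometry. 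Second, the bound $H(w)\lesssim R^{d-4}/d(w,B(0,R))^{d-2}$ is not justified near the ball, since Theorem~\ref{theo-zhu} requires $d(w,K)\ge\varepsilon\,\mathrm{diam}(K)$, and it is in any case too weak: even granting a constant multiplicative gain $e^{-ci}$ per scale, the scale-$i$ contribution would be of order $e^{-ci}\,4^{-i}R^2\min(1,2^{i(d-2)}/R^2)=e^{-ci}2^{i(d-4)}$ in the relevant range (and of order $e^{-ci}4^{-i}R^2$, summing to $R^2$, if one only uses $H\le1$), which is not $O(1)$ unless $c>(d-4)\log 2$ — a constant you do not control. What your scheme actually needs near the ball is the half-space type estimate $H(w)\lesssim d(w,B(0,R))^{-2}$ (a maximal-displacement bound for critical branching random walk), which is not among the paper's tools and is not proved in your argument. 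A smaller but real inaccuracy: the expected number of visits to a fixed sphere is not bounded uniformly in $R$ (it grows with the radius); what is geometric with bounded mean is the number of crossings between consecutive shells.

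For comparison, the paper's proof avoids all of this with a short first-moment argument in the generation variable: it writes $\estart{\tau_R^u\1(\tau_R^u<\infty)}{}\le 2R^2\,\pr{\tau_R^u<\infty}+R^2\sum_{k\ge2}\pr{\tau_R^u\ge kR^2,\ \tau_R^u<\infty}$, bounds the first term by Theorem~\ref{theo-zhu}, and for the second observes that on $\{\tau_R^u\ge kR^2,\ \tau_R^u<\infty\}$ some vertex of generation $(k-1)R^2$, positioned outside $B(0,R)$, must have a descendant at tree-distance at least $R^2$ whose position lies in $B(0,R)$; using $\E{Z_{(k-1)R^2}}=1$, the local CLT for the position of that vertex, Kolmogorov's survival estimate and~\eqref{hitting-critical}, each term is $\lesssim k^{1-d/2}R^{-2}$, which is summable in $k$. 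If you wish to salvage your multiscale route, you must add the near-field hitting estimate $H(w)\lesssim d(w,B(0,R))^{-2}$ and a per-scale lower bound on the time spent (or a long-crossing restriction) before the $b$-gain can be converted into a geometric decay over scales; as it stands, the inner-descent step does not close.
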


\begin{proof}[\bf Proof]
	We have 
	\begin{align}\label{eq:expectation}
\estart{\tau_R^u\cdot \1(\tau_R^u<\infty)}{}   \leq 2R^2\cdot \pr{\tau_R^u<\infty} +R^2\cdot \sum_{k\geq 2} \prstart{\tau_{R}^u\geq kR^2, \tau_R^u<\infty}{}.
	\end{align}
	Using Theorem~\ref{theo-zhu} we get 
	\[
	\pr{\tau_R^u<\infty} \lesssim \frac{1}{R^2},
	\]
	and hence it only remains to bound the sum appearing on the right hand side of~\eqref{eq:expectation}.
	We now notice that on the event $\{\tau_{R}^u>kR^2, \tau_R^u<\infty\}$, there must exist a node on the tree in generation $(k-1)R^2$, whose position is  not in $B(0,R)$, but which has at least one descendant at distance at least $R^2$ from it, whose position is in $B(0,R)$. Writing $Z_m$ for the number of vertices in generation $m$ and $\cS_\ell=B(0,R_{\ell+1})\setminus B(0,R_{\ell})$ with $R_\ell= 2^\ell R$, for $\ell\in \N$, we get
	\begin{align*}\label{eq:expectation2}
		\prstart{\tau_{R}^u\geq kR^2, \tau_R^u<\infty}{} \leq &\E{Z_{(k-1)R^2}}\\ 
&\nonumber		\times   \sum_{m=0}^{\infty} \prstart{X_{(k-1)R^2}\in \cS_m}{u} \max_{x\in \cS_m}\pr{\cT_c^x\text{ hits } B(0,R) \text{ after $R^2$ generations}},
		\end{align*}
		where $X$ is a simple random walk.
	For $m=0$ we bound the second probability appearing above by the probability that the tree survives for $R^2$ generations, which is of order $1/R^2$, by Kolmogorov's estimate again. For $m\geq 1$ we bound the second probability by $R^{d-4}/R_m^{d-2}$ using~\eqref{hitting-critical} and Proposition~\ref{lem.bcapballs}. We also have for a constant $c>0$, by the local central limit theorem, for all $k\geq 2$
	\[
	\prstart{X_{(k-1)R^2}\in \cS_m}{u} \lesssim \frac{R_m^d}{k^{d/2}R^d} \wedge \exp\left(-c\frac{R_m^2}{kR^2}\right).
	\]
	Putting everything together we get 
	\begin{align*}
\sum_{m=1}^{\infty} &\prstart{X_{(k-1)R^2}\in \cS_m}{u} \max_{x\in \cS_m}\prstart{\cT^x_c\text{ hits } B(0,R) \text{ after $R^2$ generations}}{} 
\\ &\lesssim \frac{1}{k^{d/2}}\cdot \frac{1}{R^2} + \sum_{m\geq 2: R_m^2\leq k R^2} \frac{R_m^d}{k^{d/2}R^d} \cdot \frac{R^{d-4}}{R_m^{d-2}} + \sum_{m: R_m^2>kR^2}\exp\left(-c\frac{R_m^2}{kR^2}\right) \cdot \frac{R^{d-4}}{R_m^{d-2}}  \asymp  \frac{1}{R^2} \cdot \frac{1}{k^{d/2-1}}. 
		\end{align*}
	Plugging this back into the sum in~\eqref{eq:expectation} and using that $\mathbb E[Z_{(k-1)R^2}] =1$ by criticality of $\mu$, concludes the proof.
\end{proof}


\subsection{Proof of~\eqref{Upper.theo}}\label{subsec.upper}
We prove here the first part of Theorem~\ref{theo.potentiel}. 
We let $R_0=0$ and $R_i=2^i$ for $i\geq 1$, as well as 
	 \[
	 B_i=B(x,R_i), \quad  \cS_i=\{y: R_i\leq \norm{y-x}\leq R_{i+1}\} \quad \text{ and }\quad K_i=K\cap \cS_i.
	 \]
	Let $I$ be the maximal index $i$ such that $K\cap \cS_i\neq \emptyset$. Then we have 
\begin{align}\label{eq:shells}
	\sum_{y\in K} G(x,y) e_K(y) \asymp \sum_{i=0}^{I} G(R_i) \sum_{y\in K_i} e_K(y).
\end{align}
Fix some $0\le i \le I$. Then applying Proposition~\ref{pro:lastpassagedec} with $K'=K_i$ and $B=B_{i+2}$ we obtain
\begin{align}\label{eq:equilprobki}
\sum_{y\in K_i}e_K(y)\leq \sum_{w\in \partial B_{i+2}}	\prstart{\cT_-^w\cap (B_{i+2}\cup K)=\emptyset, \cT_+^w\cap K_i\neq \emptyset}{}.
	\end{align}
		We now use the natural parametrisation of the spine and let $X^w$ be the simple random walk that the spine performs starting from $w$. Recall that we also write 
	$\F_-^w[a,b]$ (respectively $\F_+^w[a,b]$) for the positions of the walk indexed by $\cT$ starting from $w$ at vertices lying in the forest consisting of the adjoint trees in the past (respectively future)  emanating from the points on the spine with time index (in the natural parametrisation of the spine) in $[a,b]$.
	
	Fix some $w\in \partial B_{i+2}$, and for each $j\ge i+2$ we let $\sigma_j$ be the last time that $X^w$ is on $\partial B_j$, i.e.
	\[
	\sigma_j=\sup\{n\geq 0: X^w(n)\in \partial B_j  \}.
	\]
	Note in particular that $\sigma_{i+2} = 0$ on the event $\{\cT^w_- \cap B_{i+2} = \emptyset\}$. 
	\begin{figure}[ht!]\label{fig:pastandfuture}
	\begin{center}
		\includegraphics[scale=1]{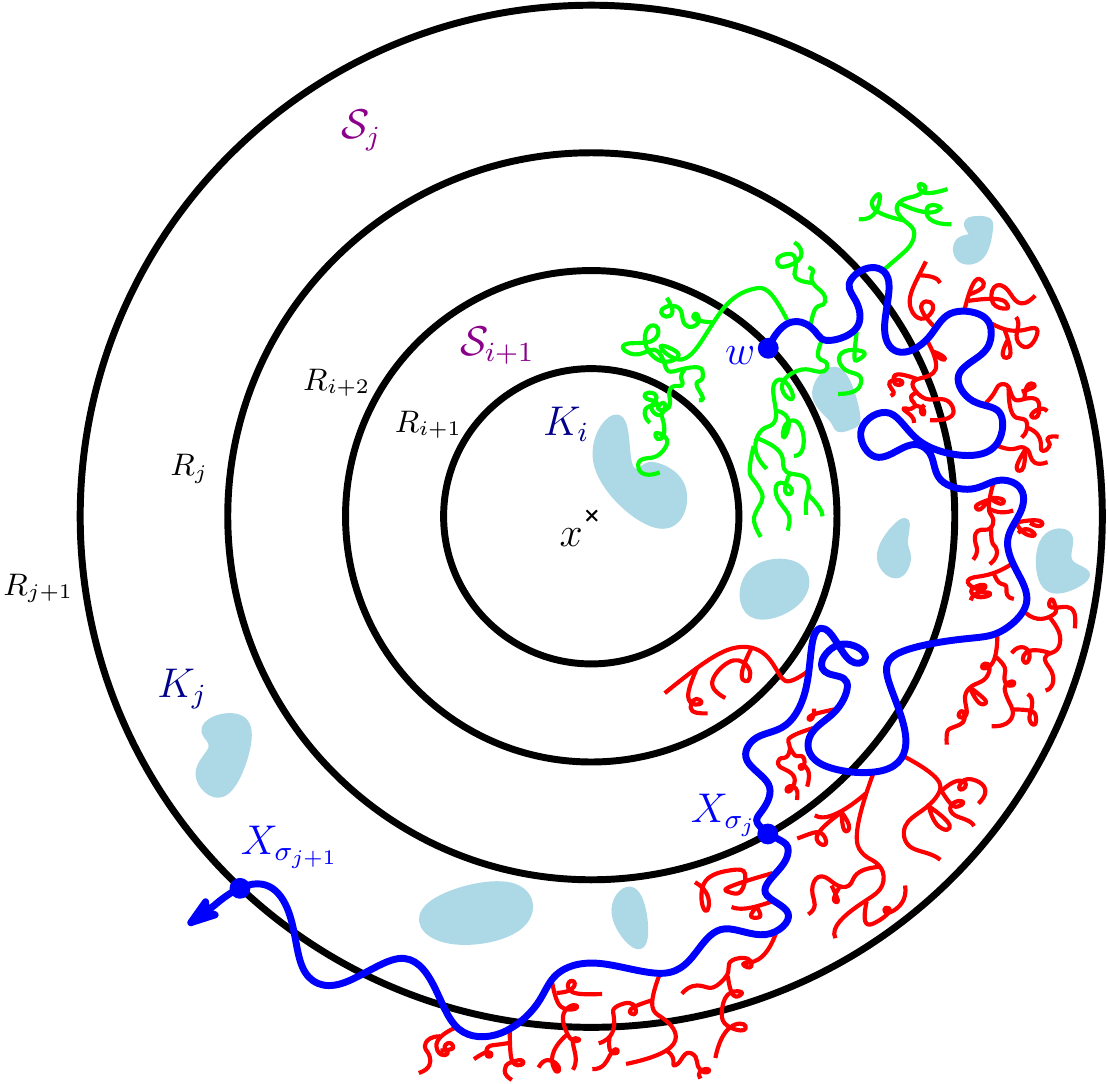}
		\end{center}
		\caption{The re-rooted tree at $w$ whose spine (in blue) has to avoid $B_{i+2}$ and the past trees (in red) have to avoid $K_j$ in the shell $\mathcal{S}_j$, whereas the future trees (in green) have to hit $K_i$.}
		\end{figure}
For $i,j\in \N$ with $j\geq i+2$ we 	define
\[
	A_{i,j} = \{\F_+^w[\sigma_j,\sigma_{j+1}]\cap K_i\neq \emptyset, \, \cT^w_- \cap B_{i+2} = \emptyset, \text{ and }\forall \ i+2\leq m\leq I+2, \ \F_-^w[\sigma_m,\sigma_{m+1}]\cap K_{m-2}=\emptyset\}.
	\]
	With this definition we then get 
	\begin{align}\label{eq:boundbyaij}
		\{\cT_-^w\cap (B_{i+2}\cup K)=\emptyset, \cT_+^w\cap K_i\neq \emptyset \} 
		 \subseteq \bigcup_{j=i+2}^{\infty} A_{i,j}. 
	\end{align}
	We now upper bound $\pr{A_{i,j}}$ for all $i,j$ with $j\geq i+2$. 
	First we consider $j>i+2$. In this case we have 
	\begin{align*}
		\prstart{A_{i,j}}{} = \sum_{\substack{u_{i+3}\in \partial B_{i+3}}} \cdots 
		\sum_{u_{I+3}\in \partial B_{I+3}}\prstart{\cap_{i+3\leq m\leq I+3}\{X^w(\sigma_m)=u_m\},  A_{i,j}}{}.
	\end{align*}
		For any path $\gamma$ using~\eqref{hitting-critical} and a union bound we get 	\begin{align*}
		\prcond{\F_+^w[\sigma_j,\sigma_{j+1}]\cap K_i\neq \emptyset}{X^w[\sigma_j,\sigma_{j+1}]=\gamma}{}		 \lesssim |\gamma| \cdot \frac{\cpc{K_i}}{R_{j}^{d-2}}.
	\end{align*}
		Recall that $\til{\cT}^z_c$ denotes an adjoint branching random walk started from $z\in \Z^d$, and that for a finite set $A\subseteq \Z^d$ we write
	\[
	b_A(z)  = \mathbb P\big(\til{\cT}_c^z\cap A=\emptyset\big).
	\]
Also recall that we write~$s(\gamma)$ for the probability that a simple random walk of length $|\gamma|$ started from $\gamma(0)$ follows the path $\gamma$. Using the above together with the independence of the adjoint trees hanging off the spine conditionally on the values of the spine, we get 
	\begin{align*}
		\prstart{A_{i,j}}{} \lesssim 	& \sum_{u_i+3 \in \partial B_{i+3}}\cdots \sum_{u_{I+3}\in \partial B_{I+3}} \Big( \sum_{\substack{\gamma:w\to u_{i+3}\\ \gamma\subseteq B_{i+2}^c}} s(\gamma)\prod_{\ell=1}^{|\gamma|}b_{B_{i+2}}(\gamma(\ell)) \Big) \times \Big(\sum_{\substack{\gamma: u_j\to u_{j+1}\\ \gamma \subseteq B_j^c}}|\gamma| s(\gamma) \cdot \frac{\cpc{K_i}}{R_j^{d-2}}\Big)  \\ 
		&\times \prod_{\substack{i+3\leq m\leq  I+2\\ m\neq j}}\Big(\sum_{\substack{\gamma: u_m\to u_{m+1} \\ \gamma\subseteq B_m^c}} s(\gamma)\prod_{\ell=0}^{|\gamma|}b_{K_{m-2}}(\gamma(\ell)) \Big)\times \mathbb P_{u_{I+3}}(\tau_{B_{I+3}}^+=\infty),
	\end{align*}
	where $\tau_A^+$ stands for the first return time to $A$ by a simple random walk.
Using Lemma~\ref{cl:length} we get 
\begin{align}\label{eq:oneofthesums}
\sum_{\substack{\gamma: u_j\to u_{j+1}\\ \gamma \subseteq B_j^c}}|\gamma| s(\gamma) \lesssim R_j^2 \cdot \sum_{\substack{\gamma: u_j\to u_{j+1}\\ \gamma \subseteq B_j^c}} s(\gamma). 
\end{align}
For any path $\gamma:w\to u_{i+3}$ with $\gamma\subseteq B_{i+2}^c$ one has by Proposition~\ref{prop.hit.Zhu}
\[
s(\gamma)\prod_{\ell=1}^{|\gamma|}b_{B_{i+2}}(\gamma(\ell)) = \pr{\cT^{u_{i+3}}_c \text{ hits } B_{i+2} \text{ via } \gamma^\leftarrow},
\]
where $\gamma^\leftarrow$ denotes the reversal of $\gamma$. Therefore, using also Theorem~\ref{theo-zhu} we now get 
\begin{align*}
\sum_{w\in \partial B_{i+2}}\sum_{\substack{\gamma:w\to u_{i+3}\\ \gamma\subseteq B_{i+2}^c}} s(\gamma)\prod_{\ell=1}^{|\gamma|}b_{B_{i+2}}(\gamma(\ell)) = \pr{\cT^{u_{i+3}}_c \text{ hits } B_{i+2}} \asymp \frac{1}{R_i^2}.
\end{align*}
 Using this together with~\eqref{eq:oneofthesums} and Lemma~\ref{lem:avoidprob}, we finally get that in the case $j>i+2$
\begin{equation*}
	\sum_{w\in \partial B_{i+2}} \prstart{A_{i,j}}{} \lesssim \frac{\cpc{K_i}}{R_j^{d-4}} \cdot \frac{1}{R_i^2}\cdot  \exp\left(-c\sum_{m=i+1}^{I} \frac{\cpc{K_m }}{R_m^{d-4}} \right)\cdot\sum_{u\in \partial B_{i+3}} \mathbb P_u(\tau_{B_{i+3}}^+=\infty),
\end{equation*}
and thus using that the capacity of a ball $B(0,R)$ is of order $R^{d-2}$, for the usual notion of capacity, we get 
\begin{equation}\label{eq:casejlarge}
\sum_{w\in \partial B_{i+2}} \mathbb P(A_{i,j}) \lesssim \frac{\cpc{K_i}}{R_j^{d-4}} \cdot R_i^{d-4}\cdot  \exp\left(-c\sum_{m=i+1}^{I} \frac{\cpc{K_m }}{R_m^{d-4}} \right). 
\end{equation}
	We now bound the sum $\sum_{w\in \partial B_{i+2}}\pr{A_{i,j}}$  in the case when $j=i+2$.
	Conditionally on the event $\{X^w[\sigma_{i+2},\sigma_{i+3}]=\gamma\}$, we let $\cT_\ell$ be the $\ell$-th adjoint tree in the past hanging off $\gamma(\ell)$ (taking $\cT_0=\emptyset$) and $\cT_\ell'$ be the $\ell$-th adjoint tree in the future hanging off $\gamma(\ell)$. Then we get using again~\eqref{hitting-critical}, 
	\begin{align*}
		&\prcond{\F_+^w[0,\sigma_{i+3}]\cap K_i\neq \emptyset, \F_-^w[0,\sigma_{i+3}]\cap B_{i+2}=\emptyset}{X^w[0,\sigma_{i+3}]=\gamma}{} \\
		&\leq \sum_{\ell=0}^{|\gamma|}\pr{\cT'_\ell\cap K_i \neq \emptyset, \forall k\neq \ell, \cT_k\cap B_{i+2}=\emptyset}
	 \lesssim \sum_{\ell=0}^{|\gamma|}\frac{\cpc{K_i}}{R_i^{d-2}} \cdot \prod_{\substack{m=1\\ m\neq \ell}}^{|\gamma|} b_{B_{i+2}}(\gamma(m)). 	\end{align*}
	Note that $b_{B_{i+2}}(\gamma(m))\geq \widetilde\mu(0)$ for all $m$, since if the tree dies out immediately, the adjoint branching random walk cannot hit the set $B_{i+2}$. Using this we deduce 
	\begin{align*}
		\prcond{\F_+^w[0,\sigma_{i+3}]\cap K_i\neq \emptyset, \F_-^w[0,\sigma_{i+3}]\cap B_{i+2}=\emptyset}{X^w[0,\sigma_{i+3}]=\gamma}{} 
		\lesssim |\gamma| \cdot \frac{\cpc{K_i}}{R_i^{d-2}} \cdot \prod_{\ell=1}^{|\gamma|}b_{B_{i+2}}(\gamma(\ell)).
	\end{align*}
	Using this we obtain 
	\begin{align*}
		\sum_{w\in \partial B_{i+2}}\pr{A_{i,i+2}} \lesssim \sum_{w\in \partial B_{i+2}}\sum_{u_{i+3}\in \partial B_{i+3}}\cdots \sum_{u_{I+3}\in \partial B_{I+3}}
		\prod_{m=i+3}^{I+2} \Big(\sum_{\substack{\gamma: u_m\to u_{m+1}\\ \gamma \subseteq B_m^c}} s(\gamma) \prod_{\ell=0}^{|\gamma|} b_{K_{m-2}}(\gamma(\ell)) \Big) \\ 
		\times \Big( \sum_{\substack{\gamma: w\to u_{i+3} \\ \gamma\subseteq B_{i+2}^c}} s(\gamma)\cdot |\gamma| \cdot \frac{\cpc{K_i}}{R_i^{d-2}} \cdot \prod_{\ell=0}^{|\gamma|-1}b_{B_{i+2}}(\gamma(\ell))\Big) \times \prstart{\tau_{B_{I+3}}^+=\infty}{u_{I+3}}.
		\end{align*}
	Using Lemma~\ref{lem:avoidprob} again we get 
	\begin{align}\label{eq:casejsmall}
	\begin{split}
		\sum_{w\in \partial B_{i+2}}\pr{A_{i,j}} & \lesssim \sum_{\substack{w\in \partial B_{i+2}\\ u\in \partial B_{i+3}}}			
		 \exp\left(-c\sum_{m=i+1}^I \frac{\cpc{K_{m}}}{R_m^{d-4}} \right)\\ &\times  \Big( \sum_{\substack{\gamma: w\to u \\ \gamma\subseteq B_{i+2}^c}} s(\gamma)\cdot |\gamma| \cdot \frac{\cpc{K_i}}{R_i^{d-2}} \cdot \prod_{\ell=1}^{|\gamma|}b_{B_{i+2}}(\gamma(\ell))\Big)\cdot \prstart{\tau_{B_{i+3}}^+=\infty}{u}.
\end{split}
	\end{align}
	We note again that for any path $\gamma:w\to u$, with $\gamma\subseteq B_{i+2}^c$, one has by Proposition~\ref{prop.hit.Zhu}
	\begin{align*}
		s(\gamma)\prod_{\ell=1}^{|\gamma|}b_{B_{i+2}}(\gamma(\ell)) = \pr{\cT_c^u \text{ hits } B_{i+2} \text{ via } \gamma^{\leftarrow}},
	\end{align*}
	where $\gamma^\leftarrow$ denotes the reversal of $\gamma$. 
	Writing $\tau^u$ for the generation of the first hitting vertex of $B_{i+2}$ by a critical branching random walk started from~$u$, we therefore deduce by Lemma~\ref{lem:exittime}, that
	\begin{align*}
		\sum_{w\in \partial B_{i+2}}\sum_{\substack{\gamma: w\to u \\ \gamma\subseteq B_{i+2}^c}} s(\gamma)\cdot |\gamma|  \cdot \prod_{\ell=1}^{|\gamma|}b_{B_{i+2}}(\gamma(\ell)) = \E{\tau^u\cdot \1(\tau^u<\infty)}\leq C. 
	\end{align*}
		Plugging this back into~\eqref{eq:casejsmall} we get that for all $i,j$ with $j\geq i+2$ we have 
		\begin{equation*}
	\sum_{w\in \partial B_{i+2}}\prstart{A_{i,j}}{} \lesssim \frac{\cpc{K_i}}{R_j^{d-4}} \cdot R_i^{d-4}\cdot
	\exp\left(-c\sum_{m=i+1}^{I} \frac{\cpc{K_{m}}}{R_m^{d-4}} \right). 	
\end{equation*}
Then taking the sum over all $j\geq i+2$ and using~\eqref{eq:boundbyaij} and~\eqref{eq:equilprobki} yields
\begin{align*}
\sum_{y\in K_i}e_{K}(y)\leq \sum_{j\geq i+2}\sum_{w\in \partial B_{i+2}}\prstart{A_{i,j}}{} \lesssim \cpc{K_i} \cdot \exp\left(-c\sum_{m=i+1}^{I} 
\frac{\cpc{K_m}}{R_m^{d-4}} \right).
\end{align*}
We can now conclude the proof using~\eqref{eq:shells} to get 
\begin{align*}
	\sum_{y\in K} G(x,y) e_K(y) \lesssim \sum_{i=0}^{I} \frac{\cpc{K_i}}{R_i^{d-4}}\cdot  \exp\left( -c \sum_{m=i+1}^{I}\frac{\cpc{K_m}}{R_m^{d-4}} \right).
\end{align*}
Setting $\epsilon_i = \cpc{K_i}/R_i^{d-4}$ we know that there exists a universal constant $C>0$, such that $0<\epsilon_i\leq C$, for all $i$. We have now reduced the problem to proving that for such $(\epsilon_i)$ we have 
\[
	\sum_{i=0}^{I} \epsilon_i\cdot  \exp\left(-\sum_{m=i+1}^{I} \epsilon_m\right)\lesssim 1. 
	\]
Using that for $x\leq C$ we have $e^{-x}\leq 1-x/e^C$ we get 
	\begin{align*}
		\sum_{i=0}^{I} \epsilon_i\cdot  \exp\left(-\sum_{m=i+1}^{I} \epsilon_m\right) \leq \sum_{i=1}^{I} \epsilon_i \prod_{m=i+1}^{I}\left( 1-\frac{\epsilon_m}{e^C}\right).
	\end{align*}
Hence, it suffices to prove that for $(\epsilon_i)_i$ with $\epsilon_i\in [0,1]$ we have 
\[
	\sum_{i=0}^{I} \epsilon_i \prod_{m=i+1}^{I}\left( 1-\epsilon_m\right)\leq 1.
	\]
	It now suffices to prove that 
	\begin{align}\label{eq:sumoverepsiloni}
	\sum_{i=0}^{I} \epsilon_i \prod_{m=i+1}^{I}\left( 1-\epsilon_m\right) + \prod_{i=1}^{I}(1-\epsilon_i) =1.
	\end{align}
	Indeed, notice that for any $i$ we have 
	\[
	\epsilon_i\prod_{m=i+1}^{I}\left( 1-\epsilon_m\right) + \prod_{m=i}^{I}(1-\epsilon_m) = \prod_{m=i+1}^{I}(1-\epsilon_m).
	\]
	Applying this iteratively we deduce~\eqref{eq:sumoverepsiloni} and this concludes the proof of~\eqref{Upper.theo}.


\subsection{Variational characterisation: first proof of Corollary~\ref{thm:variationalchar}}\label{subsec.cor.var}
In this section we prove Corollary~\ref{thm:variationalchar}, using only the results obtained so far, but under the additional hypothesis that $\mu$ has a finite third moment. We start with a technical lemma that will be needed in the proof. 
\begin{lemma}\label{lem:technical}
\rm{	There exists a positive $C$ so that the following is true. For every $M>0$, all~$x,y \in B(0,M)$ and all $\rho$ with $\|\rho\|\geq 10M$, we have 
\begin{align*}
	\sum_{z\in \mathbb Z^d} g(x,z) g(z,\rho) G(y,z) \leq C \cdot G(\rho) \cdot G(x-y), \\ 
	\sum_{z\in \mathbb Z^d} g(x,z) g(y,z) G(\rho, z)  \leq C\cdot  G(\rho)\cdot G(x-y).
\end{align*}
}
\end{lemma}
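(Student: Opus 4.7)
Set $r = \|x-y\|$ and $R = \|\rho\|$; the hypotheses give $r \leq 2M$ and $R \geq 10M$, hence $r/R \leq 1/5$. Throughout I would use the two-sided bounds $g(x,z) \asymp (1+\|x-z\|)^{2-d}$ from~\eqref{green.rw} and $G(y,z) \asymp (1+\|y-z\|)^{4-d}$ from~\eqref{Green.asymp}, together with the convolution identity $\sum_z g(x,z)\,g(z,y) \asymp G(x-y)$ that follows from~\eqref{convol.G}.

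The second inequality is the easier of the two: I would split $\Z^d = A_1 \cup A_2$ with $A_1 = B(0, R/2)$. On $A_1$ one has $\|z-\rho\| \geq R/2$, hence $G(\rho,z) \lesssim R^{4-d}$, and the convolution identity yields
\[
\sum_{z \in A_1} g(x,z)\, g(y,z)\, G(\rho,z) \;\lesssim\; R^{4-d} \sum_{z \in \Z^d} g(x,z)\, g(y,z) \;\lesssim\; R^{4-d}\, G(x-y).
\]
On $A_2$, since $\|z\| \geq R/2 \geq 5M$, we have $\|z-x\|, \|z-y\| \asymp \|z\|$, so the integrand is bounded by $\|z\|^{-2(d-2)}\, G(\rho,z)$; splitting according to whether $\|z-\rho\| \leq R$ or not and summing over dyadic annuli yields a total contribution of order $R^{8-2d}$, which is bounded by $(r/R)^{d-4}\, G(\rho)\, G(x-y) \lesssim G(\rho)\, G(x-y)$.

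For the first inequality the convolution identity does not apply directly, since $\sum_z g(x,z)\, G(y,z)$ already diverges in $d=5,6$. The plan is to decompose $\Z^d$ into (i) $B(x, r/2) \cup B(y, r/2)$, (ii) the bulk $\{z:\|z-x\|, \|z-y\| \geq r/2 \text{ and } \|z\| \leq R/2\}$, (iii) $B(\rho, R/2)$, and (iv) $\{z:\|z\| > 2R\}$. In (i), using $g(z,\rho) \lesssim R^{2-d}$ and $G(y,z) \asymp r^{4-d}$ on $B(x, r/2)$ (respectively $g(x,z) \asymp r^{2-d}$ on $B(y, r/2)$), the contribution is of order $r^{6-d}\, R^{2-d}$. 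In (iii), bounding $g(x,z) \lesssim R^{2-d}$ and $G(y,z) \lesssim R^{4-d}$ gives $\lesssim R^{8-2d}$. In (ii), dyadic annuli $\|z\| \asymp L$ with $L \in [r, R/2]$ contribute $L^{6-d}\, R^{2-d}$ each; the geometric sum is dominated by $L \sim R$ for $d \leq 6$ (giving $R^{8-2d}$, with a logarithm in $d = 6$) and by $L \sim r$ for $d \geq 7$ (giving $r^{6-d}\, R^{2-d}$). Region (iv) contributes $R^{8-2d}$ by the same scheme. In every case, comparison with the target $R^{4-d}\, r^{4-d}$ reduces to the elementary inequality $(r/R)^{d-4} \leq (1/5)^{d-4}$, which absorbs all cross factors.

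The main technical point is the case analysis of the dyadic sum in (ii), where the dominating scale switches between $L \sim r$ and $L \sim R$ at $d = 6$; in the borderline dimension one picks up a factor $\log(R/r)$, which combined with $(r/R)^2$ nevertheless remains uniformly bounded over $r \leq R/5$. No step requires sharp estimates beyond the asymptotics~\eqref{green.rw},~\eqref{Green.asymp} and the convolution identity~\eqref{convol.G}.
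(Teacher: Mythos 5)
Your proposal is correct in substance and shares the paper's basic strategy: split space according to the distance to $\rho$, use the asymptotics~\eqref{green.rw} and~\eqref{Green.asymp} together with the convolution identity from~\eqref{convol.G}, and compare each piece with the target $\|\rho\|^{4-d}\,(1+\|x-y\|)^{4-d}$. Your handling of the second inequality is essentially identical to the paper's. For the first inequality, however, your route is genuinely finer, and the extra care is warranted: the paper treats $E^c=\{z:\|z-\rho\|>\|\rho\|/2\}$ by pulling out $g(\rho,z)\lesssim \|\rho\|^{2-d}$ and then bounding the full-space convolution $\sum_z \|z\|^{2-d}\|z-(x-y)\|^{4-d}$ by $\|x-y\|^{6-d}$, an estimate which, as you correctly point out, only makes sense for $d\geq 7$ (the sum diverges at infinity in $d=5,6$). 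Your region (iv), where the decay of $g(z,\rho)$ is retained, together with the $d\leq 6$ versus $d\geq 7$ case analysis and the observation that $(r/R)^2\log(R/r)$ stays bounded in $d=6$, is exactly what is needed to make the argument work in all dimensions $d\geq 5$; the price is a longer decomposition and a dimension-dependent dyadic computation, whereas the paper's two-region split is shorter but implicitly needs $d\geq 7$ in that step.

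Two small slips to repair in the write-up, neither of which requires a new idea. First, your four regions do not cover $\{R/2<\|z\|\leq 2R\}\setminus B(\rho,R/2)$; on that set one has $g(x,z)\lesssim R^{2-d}$, $g(z,\rho)\lesssim R^{2-d}$, $G(y,z)\lesssim R^{4-d}$ and volume $O(R^d)$, so it contributes $O(R^{8-2d})$ exactly as (iii) and (iv). Second, in the bulk the dyadic shells should be indexed by $\|z-y\|$ (or $\|z-x\|$) rather than by $\|z\|$: since $r=\|x-y\|$ may be much smaller than $M$, the shell $\|z\|\asymp \|x\|$ contains the points at distance $\asymp r$ from $x$ and $y$ and contributes $\asymp r^{6-d}R^{2-d}$ in $d\geq 7$, not $\|x\|^{6-d}R^{2-d}$, and bulk points with $\|z\|<r$ are not covered by shells $L\in[r,R/2]$ at all. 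With shells in $\|z-y\|$ your per-shell bound $L^{6-d}R^{2-d}$ and your claimed totals ($r^{6-d}R^{2-d}$ for $d\geq 7$, $R^{8-2d}$ up to a factor $\log(R/r)$ for $d\leq 6$) are correct; note only that the absorbing factor is $(r/R)^2$ in regions (i)--(ii) for $d\geq 7$ and $(r/R)^{d-4}$ (times the logarithm when $d=6$) elsewhere, all of which are indeed uniformly bounded since $r/R\leq 1/5$.
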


\begin{proof}[\bf Proof]
We start with the first inequality. We consider the set $E=\{z: \norm{z-\rho}\leq \norm{\rho}/{2}\}$. Then for~$z\in E$ we have $\norm{y-z}, \norm{x-z}\asymp \norm{\rho}$, so we get using~\eqref{green.rw} and~\eqref{Green.asymp}, 
\begin{align*}
	\sum_{z\in E} g(x,z) g(\rho,z) G(z,y) &\asymp \frac{1}{\norm{\rho}^{2d-6}}\cdot \sum_{z\in E} \frac{1}{\norm{z-\rho}^{d-2}} \asymp \frac{1}{\norm{\rho}^{2d-8}} \lesssim \frac{1}{\norm{\rho}^{d-4}\norm{x-y}^{d-4}},
\end{align*}
where for the last step we used that $\norm{\rho}\geq \norm{x-y}$.
For the sum over $E^c$ we have 
	\begin{align}\label{eq:boundonic}
	\sum_{z\in E^c} g(x,z) g(\rho,z) G(z,y) \lesssim \frac{1}{\norm{\rho}^{d-2}}\cdot  \sum_{z} \frac{1}{\norm{z}^{d-2} \norm{z-(x-y)}^{d-4}}.
\end{align}
Setting $u=x-y$ and considering three different cases depending on whether $\norm{z - u}\leq \norm{u}/2$, $\norm{u}/2\leq \norm{z-u}\leq 2\norm{u}$ or $\norm{z-u}\geq 2\norm{u}$ we obtain that 
\begin{align*}
	\sum_{z} \frac{1}{\norm{z}^{d-2} \norm{z-(x-y)}^{d-4}} \lesssim \frac{1}{\norm{x-y}^{d-6}}.
\end{align*}
Plugging this into~\eqref{eq:boundonic} and using that $\norm{x-y}\leq \norm{\rho}$, we get the desired bound.

For the second sum of the statement considering again the set $E$ we have 
\begin{align*}
	\sum_{z\in E}g(x,z) g(y,z) G(\rho, z)  \asymp \frac{1}{\norm{\rho}^{2d-4}} \cdot \sum_{z\in E} \frac{1}{\norm{z-\rho}^{d-4}} \asymp \frac{1}{\norm{\rho}^{2d-8}} \lesssim \frac{1}{\norm{\rho}^{d-4}\norm{x-y}^{d-4}}.
\end{align*}
For the sum over $E^c$ we get 
\begin{align*}
	\sum_{z\in E^c} g(x,z) g(y,z) G(\rho, z) \lesssim \frac{1}{\norm{\rho}^{d-4}}\cdot \sum_{z} g(x,z)g(y,z) \asymp \frac{1}{\norm{\rho}^{d-4}}\cdot \frac{1}{\norm{x-y}^{d-4}}
\end{align*}
and this concludes the proof.
\end{proof}

	The proof of Corollary~\ref{thm:variationalchar} uses the same idea as the proof of~\cite[Theorem~2.2]{BPP95} (see also \cite[Proposition~8.26]{MP10}).

\begin{proof}[{\bf {Proof of Corollary~\ref{thm:variationalchar}}} when $\sum_i i^3 \mu(i)<\infty$]
	To prove the upper bound we take $\nu=e_K/\cpc{K}$ and use \eqref{Upper.theo}. 
	 
	To prove the lower bound, it suffices to show that for any probability measure $\nu$ supported on $K$ we have 
	\begin{align*}
		\cpc{K} \gtrsim \frac{1}{\sum_{x\in K}\sum_{y\in K} \nu(x)\nu(y) G(x,y)}. 
	\end{align*}
	Let $\nu$ be a probability measure on~$K$. Let~$\rho\in \Z^d$ and 
	\[
	Z=\sum_{u\in \cT_-} \sum_{y\in K} \frac{\1(S^\rho_u=y)\nu(y)}{G(\rho,y)}.
	\]
	Then it is clear that by the Payley-Zygmund inequality
	\begin{align}\label{eq:payley}
	\prstart{\cT^\rho_-\cap K\neq \emptyset}{} \geq  \prstart{Z>0}{}\geq \frac{(\estart{Z}{})^2}{\estart{Z^2}{}}.
	\end{align}
	 By Proposition~8.1 of~\cite{Zhu1} we have that  
	\[
	\lim_{\norm{\rho}\to\infty} \frac{\prstart{\cT_-^\rho\cap K\neq \emptyset}{}}{G(\rho)} = \cpc{K}.
	\]
	In view of this and~\eqref{eq:payley} it suffices to prove that for $\norm{\rho}$ sufficiently large
	\begin{align}\label{eq:goalforzandz2}
		\frac{\E{Z^2}}{(\E{Z})^2} \lesssim \frac{1}{G(\rho)}\cdot \sum_{x,y\in K}\nu(x)\nu(y) G(x,y).
	\end{align}
For the first moment of $Z$ we have 
	\begin{align}\label{eq:firstmomentz}
		\estart{Z}{} = \sum_{y\in K} \nu(y) \frac{G(\rho,y)}{G(\rho,y)} = 1.
	\end{align}
	We now turn to the second moment. For this we have
	\begin{align}\label{eq:seconmomentzfirstexp}
		\estart{Z^2}{} \leq 2 \estart{\sum_{k\leq n}\sum_{y\in K}\sum_{x\in K} \frac{\1(\cT^\rho(-k)=x) \1(\cT^\rho(-n)=y) \nu(x)\nu(y)}{G(\rho,x)G(\rho,y)} }{}.
	\end{align}
	Applying the shift $\theta^k$ and using the invariance of the tree and the walk under $\theta$ we obtain
	\begin{align*}
		\prstart{\cT^\rho(-k)=x, \cT^\rho(-n)=y}{} = \prstart{\cT^x(k)=\rho, \cT^x(-n+k)=y}{}.
	\end{align*}
Taking the sum over all $k$ and $n$ we obtain
\begin{align*}
	\estart{\sum_{k\leq n}\1(\cT^\rho(-k)=x)\1(\cT^\rho(-n)=y)}{} = \estart{\left(\sum_{k\geq 0}\1(\cT^x(k)=\rho)\right)\cdot \left(\sum_{n\geq 0}\1(\cT^x(-n)=y)\right)}{}. 
\end{align*}	
	So above we get the product of the number of visits to $\rho$ in the future of~$\cT$ with the number of visits to $y$ in the past of~$\cT$ including the root. Let $(X^x_m)_{m\in \N}$ be the random walk of the spine started from $x$ and denote by $V_f(k)$ the number of visits to $\rho$ in the adjoint tree in the future hanging off $X^x_k$ and $V_p(k)$ for the number of visits to $y$ in the adjoint tree in the past hanging off $X^x_k$. We then have, denoting 
	by $(\pm e_i)_{i=1,\dots,d}$ the neighbors of the origin in $\mathbb Z^d$, 
	\begin{align*}
		&\estart{\sum_{k\leq n}\1(\cT^\rho(-k)=x)\1(\cT^\rho(-n)=y)}{} \leq  \estart{\sum_{k,\ell\geq 0} V_p(k) V_f(\ell)}{} \\
		&\asymp \sum_{k\neq \ell} \E{g(X_k^x,y) g(X_\ell^x,\rho)} + \sum_{k\geq 0}\E{V_p(k)V_f(k)} \\ 
		&\lesssim \sum_{k\neq \ell} \E{g(X_k^x,y) g(X_\ell^x,\rho)}  + \sum_{i,j\geq 0} \mu(i+j+1)\cdot  i \cdot \E{ \max_{m\leq d}g(X_k^x\pm e_m, y)} \cdot j \cdot \E{\max_{m\leq d} g(X_k^x\pm e_m, \rho)} \\
		&\lesssim \sum_{k, \ell\geq 0}\E{g(X_k^x,y) g(X_\ell^x,\rho)} \leq \sum_{z,w}g(x,z)g(z,w)g(z,\rho)g(w,y) + \sum_{z,w} g(x,z) g(z,y)g(z,w)g(w,\rho) \\& \asymp \sum_z g(x,z)g(z,\rho)G(z,y) + \sum_z g(x,z)g(z,y)G(z,\rho), 
	\end{align*}
	where for the second line we used the independence between the trees in the past and future attached to different points on the spine, and for the fourth line we used the assumption that $\mu$ has a finite third moment.  
		Taking $\rho$ with $\norm{\rho}\geq 10\cdot \diam{K}$, and applying Lemma~\ref{lem:technical} we get 
	\begin{align*}
		\sum_z g(x,z)g(z,\rho)G(z,y) + \sum_z g(x,z)g(z,y)G(z,\rho)\lesssim G(\rho)\cdot G(x,y).
	\end{align*}
	Therefore for $\rho$ with $\norm{\rho}\geq 10\cdot \diam{K}$, plugging this in the above and then using~\eqref{eq:seconmomentzfirstexp} we deduce that 
	\[
	\E{Z^2} \lesssim \frac{1}{G(\rho)} \cdot \sum_{x,y \in K} \nu(x) \nu(y) G(x,y).
	\]
	This together with~\eqref{eq:firstmomentz} finish the proof of~\eqref{eq:goalforzandz2} and the proof of the theorem.
\end{proof}


\section{Moments of local times}\label{sec.Moment}

In this section we prove Theorem~\ref{theo.expmoment} and Corollary~\ref{cor.loctimeball}. To prove Theorem~\ref{theo.expmoment} we first show the result for a critical branching random walk, then for an adjoint 
critical branching random walk, and finally we extend it to the infinite tree-indexed random walk using the natural decomposition of $\cT$ 
in terms of a spine together with adjoint trees hanging off its vertices. We use here the notation $\{S_u\}_{u\in \cT}$ to denote the random walk indexed by $\cT$ and similarly with $\cT_c$ or $\widetilde \cT_c$. We then let $\mathbb P_x$ denote their law when the starting point is $x$, and write $\mathbb E_x$ for the corresponding expectation.  
 
We assume in the whole section that $\mu$ has a finite exponential moment, in other words we assume that there exists $\lambda>0$, such that 
$\sum_{i\ge 0} e^{\lambda i} \mu(i)<\infty$. We also assume that $\varphi:\mathbb Z^d\to [0,\infty)$ is a function satisfying $\|G\varphi\|_\infty\le 1$. 
Note that one may also assume that $\|g\varphi\|_\infty\le 1$ thanks to~\eqref{green.rw} and~\eqref{Green.asymp}.

\subsection{The case of a critical branching random walk}
We consider here the case of a critical branching random walk. 
The proof consists in bounding the moments using a suitable induction.

To start with let us bound the first moment. Recall that $g$ denotes the Green's function of the simple random walk $(X_n)_{n\ge 0}$, 
and notice that for any $x\in \mathbb Z^d$, one has with $Z_n$ the number of vertices in the $n$-th generation of $\cT_c$,   
\begin{equation}\label{first.moment}
\mathbb E_x\left[\sum_{v\in \cT_c} \varphi(S_v)\right] = \sum_{n\ge 0} \mathbb E[Z_n] \cdot \mathbb E_x[\varphi(X_n)] = g\varphi(x) \le 1, 
\end{equation}
using for the second equality that $\mathbb E[Z_n]= 1$, by criticality of $\mu$, and our standing hypothesis on $\varphi$ for the last inequality.

Now our recursion hypothesis takes the following form. 
\begin{lemma}\label{lem:boundsonkmoment}
\rm{	 There exists $C>0$, such that for any function $\varphi :  \mathbb Z^d\to [0,\infty)$, satisfying $\|G\varphi \|_\infty\le 1$, and any $x\in \mathbb Z^d$, we have for all $k\ge 1$, 
	\[
	\estart{\left(\sum_{v\in \cT_c} \phi(S_v) \right)^k}{x} \leq C^{k-1} \cdot ((k-2)\vee 1)! \cdot \estart{\sum_{v\in \cT_c}\phi(S_v)}{x}.
	\]}
\end{lemma}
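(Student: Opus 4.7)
The plan is to induct on $k$. The base case $k=1$ is immediate: by criticality ($\E{Z_n}=1$) and linearity of expectation, $f_1(x) := \estart{\sum_{v\in\cT_c}\phi(S_v)}{x} = g\phi(x)$, so the claim reduces to $f_1 \le f_1$.

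For $k\ge 2$, I would use the branching decomposition at the root of $\cT_c$: writing $W = \phi(x) + \sum_{j=1}^N W_j$, where $N\sim\mu$ and, conditionally on $N$ and the children's positions $y_1,\ldots,y_N$ (i.i.d.\ uniform neighbours of $x$), the $W_j$ are independent with $W_j$ distributed as $W$ under $\bP_{y_j}$. Expanding $W^k$ by the multinomial theorem and singling out the partition placing the entire power $k$ on one $W_j$ (which averages to $\bar{f}_k(x) := Pf_k(x)$, with $P$ the simple-random-walk kernel) yields the Poisson equation
\[
(I-P)f_k(x) = \phi(x)^k + R_k(x),
\]
where
\[
R_k(x) = \sum_{(m_0,a_1,\ldots,a_s)} \frac{k!\,\E{N(N-1)\cdots(N-s+1)}}{m_0!\prod_i a_i!}\,\phi(x)^{m_0}\prod_{i=1}^s \bar{f}_{a_i}(x)
\]
ranges over $m_0\ge 0$, $s\ge 1$, $a_i\ge 1$, $m_0+\sum a_i=k$, minus the two degenerate partitions $(m_0=k)$ and $(m_0=0,s=1,a_1=k)$. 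Inverting via the simple-random-walk Green's function gives $f_k = g\phi^k + gR_k$; since $\phi\le 1$ (from $\phi\le g\phi\lesssim G\phi\le 1$), the first summand is at most $f_1$.

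For $gR_k$, every surviving partition has $a_i<k$, so the induction hypothesis $\bar{f}_{a_i}\le C^{a_i-1}((a_i-2)\vee 1)!\,\bar{f}_1$ applies; combined with $\prod_i ((a_i-2)\vee 1)! \le (k-m_0-s)!\le (k-2)!$ and $\E{N(N-1)\cdots(N-s+1)}\le C_\mu^s s!$ (from the exponential moment of $\mu$), this yields the target $(k-2)!$ growth. It then remains to show $g(\phi^{m_0}\bar{f}_1^s)(x) \le C_1^s f_1(x)$. For $m_0\ge 1$, the bounds $\phi^{m_0}\le\phi$ and $\bar{f}_1\le\|g\phi\|_\infty$ give this at once. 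The delicate case is $m_0=0$, $s\ge 2$: using $\bar{f}_1 = g\phi-\phi$ (from $Pg=g-I$) reduces it to controlling an $(s+1)$-point Green integral
\[
\sum_y g(x,y)(g\phi(y))^s = \sum_{z_1,\ldots,z_s}\phi(z_1)\cdots\phi(z_s)\sum_y g(x,y)\prod_{i=1}^s g(y,z_i).
\]
In dimensions $d\ge 5$, iterating the tree bound $\sum_y g(x,y)g(y,z_1)g(y,z_2)\lesssim g(x,z_1)G(z_1,z_2)+g(x,z_2)G(z_1,z_2)$ --- a consequence of $g\ast g\lesssim G$ from~\eqref{convol.G} --- and summing against $\prod\phi(z_i)$ yields $\lesssim C^{s-1}f_1(x)\|G\phi\|_\infty^{s-1}\le C^{s-1}f_1(x)$.

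The main obstacle I anticipate is the combinatorial/analytic bookkeeping that combines the multinomial coefficients, the $\mu$-factorial moments, and the factorials from the induction hypothesis across all admissible partitions to yield an aggregate constant of exactly $C^{k-1}(k-2)!$ with a single $C$ independent of $k$. The dimensional hypothesis $d\ge 5$ enters through the multi-point Green estimate above, which ensures that each additional $g\phi$ factor in the integrand, after convolution with $g$, is absorbed into one factor of $\|G\phi\|_\infty\le 1$, so the constant does not blow up with $s$ (and hence with $k$).
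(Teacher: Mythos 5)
Your strategy is close in spirit to the paper's (induction on $k$, a branching decomposition, and multi-point Green's function estimates that use $d\ge 5$ and $\|G\phi\|_\infty\le 1$), but your decomposition is genuinely different: you decompose at the root and invert a Poisson equation, whereas the paper expands the $k$-th moment over $k$-tuples of vertices and decomposes according to their most recent common ancestor, so that every step is an inequality between nonnegative sums. This difference creates your first gap: from $(I-P)f_k=\phi^k+R_k$ with everything nonnegative you only get $f_k\ge g(\phi^k+R_k)$ for free; the direction you actually use, $f_k\le g\phi^k+gR_k$, requires $P^nf_k\to 0$, i.e.\ a priori finiteness and decay of $f_k$ — which is essentially what the lemma is supposed to establish. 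This is repairable (truncate the tree at generation $M$, iterate the recursion finitely many times so the remainder vanishes, then let $M\to\infty$ by monotone convergence), but as written the inversion step is unjustified. Your analytic core, by contrast, is sound and is the same as the paper's Claim~\ref{cl:boundongphisquare}: reduce $g\big((g\phi)^s\big)$ to the two-factor case using $\|g\phi\|_\infty\lesssim 1$ and a three-point bound of the type $\sum_y g(x,y)g(y,z)g(y,z')\lesssim$ (Green $\times$ $G$) terms, then absorb one factor of $\|G\phi\|_\infty$. (Minor point: $\phi\le g\phi\lesssim G\phi$ gives $\phi\le C$, not $\phi\le 1$; harmless.)

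The second, more serious gap is exactly the bookkeeping you flag as the anticipated obstacle, and the shortcut you sketch would fail. Bounding $\prod_i((a_i-2)\vee 1)!\le (k-m_0-s)!\le(k-2)!$ \emph{separately} and then invoking $\E{N(N-1)\cdots(N-s+1)}\le C_\mu^s s!$ leaves you with $\sum_{m_0,s,(a_i)}\frac{k!}{m_0!\prod_i a_i!}\,C^s$ (times bounded analytic factors), and the multinomial coefficients summed over compositions are of order $s!\,S(k-m_0,s)\approx s^{\,k-m_0}$, i.e.\ super-exponential in $k$; the induction constant cannot stay of the form $C^{k-1}$. The point is that you must not decouple the factorials from the multinomial denominators: the crucial cancellation is $\frac{((a_i-2)\vee 1)!}{a_i!}=\frac{1}{a_i((a_i-1)\vee 1)}$, and what is then needed is precisely the paper's Claim~\ref{cl:induction}, namely $\sum_{a_1+\cdots+a_s=k-m_0}\prod_i\frac{1}{a_i((a_i-1)\vee 1)}\le C_1^{\,s}/\big((k-m_0)\vee 1\big)^2$, which converts $k!$ into $C_1^s\,(k-2)!$ up to a sum over $m_0$ that converges thanks to the $1/m_0!$. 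With that estimate inserted (and the truncation fix above), your route does close, but these are the two places where the paper does real work and where your proposal, as written, has gaps.
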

Note that Theorem~\ref{theo.expmoment} for the critical tree $\cT_c$ immediately follows from this lemma and~\eqref{first.moment}. 
\begin{proof}[\bf Proof of Lemma~\ref{lem:boundsonkmoment}]

We will prove this by induction on $k$ and for a constant $C$ that we will determine. The case $k=1$ is immediate. We assume now that it holds for $k-1$ and we will prove it for $k$. We have 
\begin{align}\label{eq:sumofphiexpand}
	\estart{\left(\sum_{v\in \cT_c} \phi(S_v) \right)^k}{x} = \estart{\sum_{v_1,\ldots,v_k\in \cT_c} \phi(S_{v_1})\cdots \phi(S_{v_k})}{x}.
\end{align}
	We write $v_0={\rm{MRCA}}(v_1,\ldots,v_\ell)$, if $v_0$ is the most recent common ancestor of $v_1,\ldots, v_\ell$. With this notation we have 
	\begin{align}\label{eq:expressionforphi}
	\begin{split}
		&\sum_{v_1,\ldots,v_k\in \cT_c} \phi(S_{v_1})\cdots \phi(S_{v_k}) \\ &= \sum_{v_1,\ldots,v_k\in \cT_c}\sum_{v_0\in \cT_c}\1(v_0={\rm{MRCA}}(v_1,\ldots ,v_k))\1(v_0\notin \{v_1,\ldots, v_k\})\prod_{i=1}^{k} \phi(S_{v_i}) \\&+ \sum_{v_1,\ldots,v_k\in \cT_c}\sum_{v_0\in \cT_c}\1(v_0={\rm{MRCA}}(v_1,\ldots ,v_k))\1(v_0\in \{v_1,\ldots,v_k\})\prod_{i=1}^{k} \phi(S_{v_i})
	\end{split}
			\end{align}
	We first treat the case where $v_0\notin \{v_1,\ldots, v_k\}$. Changing the order of summation this becomes equal to 
	\begin{align*}
\sum_{v_0\in \cT_c} \sum_{j\geq 1}\1(\deg(v_0)=j)\sum_{y\in \Z^d}\1(S_{v_0}=y)\sum_{L=2}^{j}{j\choose L} \sum_{\substack{n_1,\ldots, n_L\\ \sum_{i}n_i=k\\ n_i\geq 1, \forall  i} }{k\choose n_1,\ldots, n_L}\prod_{\ell=1}^{L}\sum_{v_1^{\ell},\ldots, v_{n_\ell}^\ell\in \cT_\ell}\prod_{m=1}^{n_\ell}\phi(S_{v_m^{\ell}}),
	\end{align*} 
	where $\cT_\ell$ stands for the $\ell$-th descendant tree of $v_0$ containing at least one of the vertices $v_1,\dots,v_k$, and $n_\ell$ is the number of these vertices that it contains. The expectation of the expression above equals
	\begin{align*}
		\sum_{j\geq 1}\mu(j) \sum_{y\in \Z^d} g(x-y) \sum_{L=2}^{j} {j\choose L}\sum_{\substack{n_1,\ldots, n_L\\ \sum_{i}n_i=k\\ n_i\geq 1, 
		\forall  i} }{k\choose n_1,\ldots, n_L} \prod_{\ell=1}^{L}\estart{\left(\sum_{v\in \cT_c}\phi(S_v) \right)^{n_\ell}}{\nu_{y}},
	\end{align*} 
	where $\nu_y$ is the uniform distribution on all neighbours of $y$.
	We can now use the induction hypothesis to upper bound the expectations appearing above and obtain that this last expression  is bounded by 
	\begin{align*}
		\sum_{j\geq 1}\mu(j) \sum_{y\in \Z^d} g(x-y) \sum_{L=2}^{j} {j\choose L}\sum_{\substack{n_1,\ldots, n_L\\ \sum_{i}n_i=k\\ n_i\geq 1, 
		\forall  i} }{k\choose n_1,\ldots, n_L} \cdot C^{k-L}\cdot \prod_{i=1}^{L} ((n_i-2)\vee 1)! \cdot \left(\estart{\sum_{v\in \cT_c}\phi(S_v)}{\nu_y} \right)^{L}.
	\end{align*}
	By \eqref{first.moment} we get that the last expectation to the power $L$ can be bounded by its square, since we take $L\geq 2$. Expanding the combinatorial factor we deduce
	\begin{align}\label{eq:bigsum}
		\sum_{j\geq 1}\mu(j) \sum_{y\in \Z^d} g(x-y)\left(\estart{\sum_{v\in \cT_c}\phi(S_v)}{\nu_y}\right)^2 \cdot \sum_{L=2}^{j} {j\choose L}\cdot C^{k-L}\cdot \sum_{\substack{n_1,\ldots, n_L\\ \sum_{i}n_i=k\\ n_i\geq 1, \forall i} } k!\cdot \prod_{i=1}^{L} \frac{1}{n_i((n_i-1)\vee 1)}.
	\end{align}
	\begin{claim}\label{cl:induction}
		There exists a positive constant $C_1$ so that for every $L\geq 2$ we have 
		\[
		\sum_{\substack{n_1,\ldots, n_L\\ \sum_{i}n_i=k\\ n_i\geq 1, \forall  i} } \prod_{i=1}^{L} \frac{1}{n_i((n_i-1)\vee 1)} \leq \frac{(C_1)^L}{k^2}.
		\]
	\end{claim}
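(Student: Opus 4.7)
The plan is to reduce the claim to a cleaner ``power of two'' sum and then proceed by induction on $L$. First, I would note that the factor $((n-1)\vee 1)$ is just a way of extending $n-1$ to $n=1$, and a direct comparison gives
\[
\frac{1}{n((n-1)\vee 1)} \le \frac{2}{n^2}\qquad\text{for every } n\ge 1,
\]
(check $n=1,2$ by hand, and $n(n-1)\ge n^2/2$ for $n\ge 2$). Hence it suffices to establish
\[
S_L(k) \;:=\; \sum_{\substack{n_1+\cdots+n_L=k\\ n_i\ge 1}} \prod_{i=1}^{L}\frac{1}{n_i^2} \;\le\; \frac{A^L}{k^2}
\]
for some absolute constant $A$, and then take $C_1 = 2A$.

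I would prove this last bound by induction on $L$. For $L=1$, $S_1(k)=1/k^2$, so any $A\ge 1$ works. For the inductive step, separate out the last coordinate to write
\[
S_L(k) \;=\; \sum_{m=1}^{k-L+1} \frac{1}{m^2}\, S_{L-1}(k-m) \;\le\; A^{L-1}\sum_{m=1}^{k-1}\frac{1}{m^2(k-m)^2},
\]
using the inductive hypothesis on $S_{L-1}(k-m)$. The main remaining task is to show that the right-hand sum is $\lesssim 1/k^2$. For this I would use the partial fraction identity
\[
\frac{1}{m^2(k-m)^2} \;=\; \frac{1}{k^2}\left(\frac{1}{m}+\frac{1}{k-m}\right)^2 \;=\; \frac{1}{k^2}\left(\frac{1}{m^2}+\frac{1}{(k-m)^2}+\frac{2}{m(k-m)}\right),
\]
and then $\frac{1}{m(k-m)} = \frac{1}{k}\bigl(\frac{1}{m}+\frac{1}{k-m}\bigr)$. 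Summing over $m=1,\ldots,k-1$ and using $\sum_m 1/m^2 \le \pi^2/6$ together with the elementary bound $H_{k-1}/k \le 1$, all three pieces contribute $O(1/k^2)$, so there is an absolute constant $C_0$ with
\[
\sum_{m=1}^{k-1}\frac{1}{m^2(k-m)^2}\;\le\;\frac{C_0}{k^2}.
\]
Plugging back gives $S_L(k)\le A^{L-1}C_0/k^2$, and choosing $A:=\max(1,C_0)$ closes the induction.

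There is no real obstacle here; the only small point of care is that the induction starts at $L=1$ while the claim is stated for $L\ge 2$, and the bound $f(n)\le 2/n^2$ costs a factor $2^L$ which is absorbed into the constant $C_1$. The argument is purely a deterministic estimate on convolution powers of $1/n^2$ and does not interact with the tree structure at all.
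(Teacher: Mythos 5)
Your proof is correct and follows essentially the same route as the paper: induction on $L$, peeling off one coordinate and invoking the two-fold convolution bound $\sum_m \frac{1}{m^2(k-m)^2}\lesssim k^{-2}$, which is exactly the paper's inequality~\eqref{eq:standardbound} (stated there for the weight $\frac{1}{n((n-1)\vee 1)}$ and asserted without proof). Your only deviations are cosmetic: you first replace $\frac{1}{n((n-1)\vee 1)}$ by $\frac{2}{n^2}$ at the cost of a factor $2^L$, and you actually verify the convolution estimate via partial fractions, which is a welcome extra detail.
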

\begin{claim}\label{cl:boundongphisquare}
		There exists a positive constant $C_2$ so that for all $x$ we have 
		\[
		 \sum_{y\in \Z^d} g(x-y)\left(\estart{\sum_{v\in \cT_c}\phi(S_v)}{\nu_y}\right)^2\leq C_2 \cdot \estart{\sum_{v\in \cT_c}\phi(S_v)}{x}.
		\]
	\end{claim}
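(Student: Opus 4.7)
Since $\mathbb{E}_{\nu_y}\!\left[\sum_{v\in\cT_c}\phi(S_v)\right] = P g\phi(y) = g\phi(y) - \phi(y) \leq g\phi(y)$, it suffices to prove
\[
I(x) := \sum_{y\in\Z^d} g(x-y)\, g\phi(y)^2 \leq C_2\, g\phi(x).
\]
Expanding the square and using Fubini, $I(x) = \sum_{z_1,z_2} \phi(z_1)\phi(z_2)\, T(x,z_1,z_2)$ where
\[
T(x,z_1,z_2) := \sum_{y\in\Z^d} g(x-y)\, g(y-z_1)\, g(y-z_2).
\]
The plan is to prove a pointwise estimate
\[
T(x,z_1,z_2) \lesssim g(x-z_1)\bigl[G(x-z_2) + G(z_1-z_2)\bigr] + g(x-z_2)\bigl[G(x-z_1) + G(z_1-z_2)\bigr],
\]
and then integrate each of the four resulting terms against $\phi(z_1)\phi(z_2)$: using $\|G\phi\|_\infty \leq 1$, each such term contributes at most $g\phi(x)\cdot\|G\phi\|_\infty \leq g\phi(x)$.

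The pointwise estimate on $T$ is obtained by decomposing $\Z^d$ into the three Voronoi cells $V_x,V_{z_1},V_{z_2}$ associated to the triple $\{x,z_1,z_2\}$, and bounding the contribution of each cell by a shell-by-shell analysis. On $V_x$, the triangle inequality gives $\|y-z_i\| \geq \|x-z_i\|/2$ so that $g(y-z_i) \lesssim g(x-z_i)$; combined with $\sum_{\|y-x\| \leq R} g(x-y) \asymp R^2$ and the identity $g(w)\|w\|^2 \asymp G(w)$, this yields a contribution of order $g(x-z_1) G(x-z_2) + g(x-z_2) G(x-z_1)$. On $V_{z_1}$, the distance $\|y-z_1\|$ is smallest, and a dyadic shell analysis over $\|y-z_1\| \in [2^k,2^{k+1}]$ produces in each geometric sub-case a contribution which is either $G(x-z_1) g(z_1-z_2)$ or $g(x-z_1) G(z_1-z_2)$; in either case this is at most $g(x-z_1) G(z_1-z_2)$ (the two products are related by the factor $(\|x-z_1\|/\|z_1-z_2\|)^2$, and the active sub-case always produces the smaller one). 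An analogous bound holds on $V_{z_2}$.

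\textbf{Main obstacle.} The delicate step is the $V_{z_1}$ (and $V_{z_2}$) analysis. Replacing the bound by the seemingly-natural $G(x-z_1) g(z_1-z_2)$ would, after integration against $\phi(z_1)\phi(z_2)$, yield only $\sum_{z_1} G(x-z_1)\phi(z_1) g\phi(z_1) \leq \|g\phi\|_\infty\, G\phi(x) \lesssim 1$, which is a constant and therefore insufficient whenever $g\phi(x) \ll 1$. One must instead carefully verify that the actual contribution on $V_{z_1}$ is always the smaller of the two candidate products, which is $\leq g(x-z_1) G(z_1-z_2)$; the latter integrates to $\sum_{z_1} g(x-z_1)\phi(z_1) G\phi(z_1) \leq g\phi(x)$ as needed. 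An auxiliary preliminary bound $\|g\phi\|_\infty \lesssim 1$ (which follows from $\|G\phi\|_\infty \leq 1$ by a dyadic decomposition of $g\phi(y)$ using $g(w) \asymp G(w)/(1+\|w\|^2)$ on dyadic shells) is used implicitly to control the outermost shells and verify convergence.
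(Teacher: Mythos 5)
Your proposal is correct, and its overall architecture matches the paper's: reduce the claim to a pointwise estimate on the triple convolution $T(x,z_1,z_2)=\sum_{y} g(x-y)g(y-z_1)g(y-z_2)$ in terms of products of $g$ and $G$, and then integrate against $\phi(z_1)\phi(z_2)$, using $\|G\phi\|_\infty\le 1$ so that each admissible term is at most $g\phi(x)$. The differences are in the two ingredients. For the neighbour average, the paper uses $g(y_1-z)\asymp g(y-z)$ for $y_1\sim y$, whereas your identity $\estart{\sum_{v\in\cT_c}\phi(S_v)}{\nu_y}=g\phi(y)-\phi(y)\le g\phi(y)$ is a slightly cleaner reduction. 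For the pointwise bound, the paper uses a single dichotomy: on $E=\{y:\|y-z_1\|\ge \|x-z_1\|/2\}$ it bounds $g(y-z_1)\lesssim g(x-z_1)$ and sums $g(x-y)g(y-z_2)$ over all of $\Z^d$ to get $G(x-z_2)$ via~\eqref{convol.G}, while on $E^c$ the triangle inequality gives $g(x-y)\lesssim g(x-z_1)$ and summing $g(y-z_1)g(y-z_2)$ gives $G(z_1-z_2)$; this produces the two-term bound $g(x-z_1)\bigl(G(x-z_2)+G(z_1-z_2)\bigr)$ with no shell analysis and no case distinction on relative distances. Your Voronoi-cell decomposition with dyadic shells yields instead the symmetric four-term bound, which integrates equally well, and your key observation is exactly the right one: the cross term coming from the cell around $z_1$ must carry the factor $g(x-z_1)$ rather than $G(x-z_1)$, since the latter only yields a constant after integration, not $g\phi(x)$; the shell computation does confirm that in each regime ($\|x-z_1\|\le\|z_1-z_2\|$ or the reverse) the contribution is the smaller of the two candidate products, hence at most $g(x-z_1)G(z_1-z_2)$. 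So your argument goes through; the paper's route is simply shorter because the convolution identity $\sum_y g(x-y)g(y-z)\asymp G(x-z)$ replaces the shell-by-shell bookkeeping in one stroke.
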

	We now complete the proof of the lemma and defer the proofs of the claims to the end of the section. 
Using the two claims above we deduce that the sum in~\eqref{eq:bigsum} is upper bounded by 
	\begin{align*}
		C_2\cdot  \estart{\sum_{v\in \cT_c}\phi(S_v)}{x}\cdot
\sum_{j\geq 1}\mu(j) \ \sum_{L=2}^{j} {j\choose L}\cdot C^{k-L}\cdot  k!\cdot \frac{C_1^L}{k^2}. 
	\end{align*}
Using that $k!/k^2\leq (k-2)!$ and ${j\choose L}\leq j^L/L!$ and taking $C>C_1^{100}$,  we obtain that the above sum is bounded by
		\begin{align*}
		C_2\cdot (k-2)!\cdot C^k\cdot \estart{\sum_{v\in \cT_c}\phi(S_v)}{x}\cdot  \sum_{L=2}^{k} \frac{C^{-99L/100}}{L!}\cdot \sum_{j\geq L} \mu(j)\cdot j^L\\
		\leq C_2\cdot (k-2)!\cdot C^k\cdot \estart{\sum_{v\in \cT_c}\phi(S_v)}{x}\cdot\sum_{L=2}^{k} C_3^L \cdot C^{-99L/100},
			\end{align*}
			where $C_3$ is a positive constant and where for this we used the assumption that $\mu$ has exponential moments.
	Taking further $C>C_3^{100}\vee 4$ we get that 
	the sum above is upper bounded by 
	\begin{align*}
		2C^{k-1.96} \cdot (k-2)! \cdot \estart{\sum_{v\in \cT_c}\phi(S_v)}{x}.
	\end{align*}
	So far we have established that
	\begin{align}\label{eq:boundwhenv0notoneofthem}
	\begin{split}
		\estart{\sum_{v_1,\ldots,v_k\in \cT_c}\sum_{v_0\in \cT_c}\1(v_0={\rm{MRCA}}(v_1,\ldots ,v_k))\1(v_0\notin \{v_1,\ldots, v_k\})\prod_{i=1}^{k} \phi(S_{v_i})}{x}\\ \leq 2C^{k-1.96} \cdot (k-2)! \cdot \estart{\sum_{v\in \cT_c}\phi(S_v)}{x}.
		\end{split}
	\end{align}
	We then have, denoting by $\cT_{v_0}$ the subtree of descendants of $v_0$, 
	\begin{align*}
				& \estart{\sum_{v_1,\ldots,v_k\in \cT_c}\sum_{v_0\in \cT_c} \1(v_0={\rm{MRCA}}(v_1,\ldots ,v_k) )\1(v_0\in \{v_1,\ldots, v_k\}) \prod_{i=1}^{k} \phi(S_{v_i})}{x} \\&=
				\estart{\sum_{v_0\in \cT_c} \sum_{\substack{v_1,\ldots, v_k\in \cT_{v_0}\\v_0\in \{v_1,\ldots,v_k\} }} \prod_{i=1}^{k}\phi(S_{v_i})}{x}\leq \estart{\sum_{v_0\in \cT_c}\sum_{y\in \Z^d}\1(S_{v_0}=y)\phi(y)\sum_{v_1,\ldots,v_{k-1}\in \cT_{v_0}}k \prod_{i=1}^{k-1}\phi(S_{v_i})  }{x} \\&= 
				\sum_{y\in \Z^d} \phi(y)g(x-y) k \cdot \estart{\left(\sum_{v\in \cT_c}\phi(S_v)\right)^{k-1}}{y}.
			\end{align*}
			We now consider two different cases depending on whether $k=2$ or $k\geq 3$. If $k=2$, then the sum above becomes equal to
			\[
			2 \sum_{y\in \Z^d} \phi(y)g(x-y)\estart{\sum_{v\in \cT_c}\phi(S_v)}{y} \leq 2\estart{\sum_{v\in \cT_c}\phi(S_v)}{x} \leq 2C^{k-1.96}\cdot \estart{\sum_{v\in \cT_c}\phi(S_v)}{x},
			\]
			since $C>4$ and where we used again \eqref{first.moment}.  
			Suppose next that $k\geq 3$. Then we can use the induction hypothesis to get
\begin{align*}
	\sum_{y\in \Z^d} \phi(y)g(x-y) k \cdot \estart{\left(\sum_{v\in \cT_c}\phi(S_v)\right)^{k-1}}{y}&\leq \sum_{y\in \Z^d} \phi(y) g(x-y) \cdot k \cdot (k-3)! C^{k-2} \cdot \estart{\sum_{v\in \cT_c}\phi(S_v)}{y}
	\end{align*}
Using that $k\leq 3(k-2)$ for $k\geq 3$ we can further bound the sum above by 
\[
 3(k-2)!\cdot C^{k-2}\cdot \estart{\sum_{v\in \cT_c}\phi(S_v)}{x}.
\]
Therefore in both cases taking $C>(3/2)^{25}$ we get 
\begin{align*}
	\estart{\sum_{v_1,\ldots,v_k\in \cT_c}\sum_{v_0\in \cT_c} \1(v_0={\rm{MRCA}}(v_1,\ldots ,v_k) )\1(v_0\in \{v_1,\ldots, v_k\}) \prod_{i=1}^{k} \phi(S_{v_i})}{x}\\ \leq 
	2C^{k-1.96}\cdot (k-2)!\cdot \estart{\sum_{v\in \cT_c}\phi(S_v)}{x}.
\end{align*}
Plugging in this bound together with the bound from~\eqref{eq:boundwhenv0notoneofthem}  into~\eqref{eq:expressionforphi} and using~\eqref{eq:sumofphiexpand} we finally deduce
\begin{align*}
	\estart{\left(\sum_{v\in \cT_c} \phi(S_v) \right)^k}{x} \leq 4C^{k-1.96}\cdot (k-2)!\cdot \estart{\sum_{v\in \cT_c}\phi(S_v)}{x}\leq C^{k-1} \cdot (k-2)!\cdot \estart{\sum_{v\in \cT_c}\phi(S_v)}{x},
	\end{align*}
since $C>(3/2)^{25}$ and this completes the proof.
\end{proof}

It remains to prove the two claims used in the proof above. 

\begin{proof}[\bf Proof of Claim~\ref{cl:induction}]
	We first note that there exists a positive constant $A>1$ such that for all $k$ we have 
	\begin{align}\label{eq:standardbound}
		\sum_{\ell=1}^{k} \frac{1}{\ell((\ell-1)\vee 1)}\cdot \frac{1}{(k-\ell)^2} \leq \frac{A}{k^2}. 
	\end{align}
We will prove that the statement of the claim is true for $C_1=A$ by induction on $L$. For $L=1$, the claim is obvious for all $k$. Suppose now that the claim is true for $L-1$ for all values of $k$. We will establish it also for $L$. We have 
\begin{align*}
	\sum_{\substack{n_1,\ldots, n_L\\ \sum_{i}n_i=k\\ n_i\geq 1, \forall i} } \prod_{i=1}^{L} \frac{1}{n_i((n_i-1)\vee 1)} &= \sum_{n_1=1}^{k} \frac{1}{n_1((n_1-1)\vee 1)} \cdot \sum_{\substack{n_2,\ldots, n_L\\ \sum_{i}n_i=k-n_1\\ n_i\geq 1, \forall  i}} \prod_{i=2}^{L} \frac{1}{n_i((n_i-1)\vee 1)} \\
&	\leq \sum_{n_1=1}^{k} \frac{1}{n_1((n_1-1)\vee 1)} \cdot \frac{(C_1)^{L-1}}{(k-n_1)^2} \leq C_1^{L-1}\cdot \frac{A}{k^2} = \frac{C_1^L}{k^2},
\end{align*}
	where for the first inequality we used the induction hypothesis and for second one we used~\eqref{eq:standardbound}. This completes the proof.
		\end{proof}

			\begin{proof}[\bf Proof of Claim~\ref{cl:boundongphisquare}]
	We first prove that there exists a universal constant $C$ so that for all $x,z,z'$ we have 
	\begin{align*}
		\sum_y g(x-y) g(y-z) g(y-z')\leq C (g(x-z) G(x-z') + g(x-z) G(z-z')).                      
	\end{align*}
	Indeed, letting $E=\{y: \|y-z\|\geq \|z-x\|/2\}$ we have for the sum over $E$
	\begin{align*}
		\sum_{y\in E}g(x-y) g(y-z) g(y-z')\lesssim \frac{1}{\|z-x\|^{d-2}} \cdot \sum_y g(x-y)  g(y-z') \lesssim g(x-z) G(x-z'), 
	\end{align*}
	using~\eqref{green.rw} and~\eqref{convol.G} for the last inequality.
	We notice that when $y\in E^c$, then by the triangle inequality we get $\|y-x\|\geq \|z-x\|/2$. So for the sum over $E^c$ we get
	\begin{align*}
		\sum_{y\in E^c}g(x-y) g(y-z) g(y-z')\lesssim \frac{1}{\|z-x\|^{d-2}} \cdot \sum_{y} g(y-z) g(y-z')\lesssim g(x-z)G(z-z').
	\end{align*}
We now prove the statement of the claim. We have, using the notation $x\sim y$ when $x$ and $y$ are neighbors in $\mathbb Z^d$,  
\begin{align*}
	 \sum_{y\in \Z^d} &g(x-y)\left(\frac{1}{2d}\sum_{z\sim y}\estart{\sum_{v\in \cT_c}\phi(S_v)}{z}\right)^2 \\
&	 = \frac{1}{(2d)^2}
	 \sum_{y\in \Z^d}\sum_{y_1\sim y}\sum_{y_2\sim y}\sum_{z,z'} g(x-y) g(y_1-z)g(y_2-z')\phi(z)\phi(z').
\end{align*}
Using that for all neighbours $y_1$ of $y$ and all $z$ we have that $g(y_1-z)\asymp g(y-z)$, we then get 
\begin{align*}
	 \sum_{y\in \Z^d} &g(x-y)\left(\frac{1}{2d}\sum_{z\sim y}\estart{\sum_{v\in \cT_c}\phi(S_v)}{z}\right)^2 
	 \asymp \sum_{y\in \Z^d}\sum_{z,z'} g(x-y) g(y-z)g(y-z')\phi(z)\phi(z') \\
	 &\lesssim \sum_{z,z'} (g(x-z) G(x-z') + g(x-z) G(z-z'))\phi(z)\phi(z')
	\leq 2\cdot \sum_{z}g(x-z) \phi(z), 
	 \end{align*}
where in the last inequality we used Claim~\ref{lem:boundonge}. Using that 
\[
\sum_{z}g(x-z) \phi(z)=\estart{\sum_{v\in \cT_c}\phi(S_v)}{x},
\]
finally concludes the proof of the claim.
\end{proof}


\subsection{Moments for the infinite tree}
Recall that we write $x\sim y$ when $x$ and $y$ are neighbours in $\mathbb Z^d$. We begin with the case of an adjoint branching random walk. 
\begin{lemma}
\rm{
	There exists $C>0$, so that for all $k\in \N$ and $z\in \Z^d$, we have 
	\[
	\estart{\left(\sum_{v\in \til{\cT}_c}\phi(S_v) \right)^k}{z}\leq C^{k-1} \cdot ((k-2)\vee 1)! \cdot \sup_{x\sim 0} \estart{\sum_{v\in {\cT}_c}\phi(S_v)}{z+x}.
	\]} 
\end{lemma}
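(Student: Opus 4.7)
The plan is to exploit the structural decomposition of $\til{\cT}_c$ as a root at position $z$ together with a random number of independent subtrees, each of which is a copy of $\cT_c$ starting from a neighbour of $z$, and then apply the previous lemma to each such subtree. By construction, the root of $\til{\cT}_c$ produces $D$ offspring with $D\sim\til\mu$; given $D$ and the increments from the root to its children, the subtrees rooted at these children are independent copies of $\cT_c$ starting from uniformly chosen neighbours of $z$. Writing $S_i=\sum_{v\in\cT_c^{(i)}}\phi(S_v^{(i)})$ for the contribution of the $i$-th subtree starting at $z_i\sim z$, we obtain
\[
\sum_{v\in\til{\cT}_c}\phi(S_v)=\phi(z)+\sum_{i=1}^D S_i.
\]

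Set $M:=\sup_{x\sim 0}\estart{\sum_{v\in\cT_c}\phi(S_v)}{z+x}$. Two preliminary facts will be needed. First, since $g(x)\ge c_0>0$ for every nearest neighbour $x$ of $0$, we have $g\phi(z+x)\ge g(x)\phi(z)$, so $\phi(z)\le c_0^{-1}M$. Second, since $g\lesssim G$ pointwise (as $g(x)\asymp\|x\|^{2-d}$ while $G(x)\asymp\|x\|^{4-d}$), the hypothesis $\|G\phi\|_\infty\le 1$ yields $\|g\phi\|_\infty\le C_0$ for some universal $C_0$, and in particular $M\le C_0$.

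Now expand $(\phi(z)+\sum_{i=1}^D S_i)^k$ by the multinomial theorem and take conditional expectation given $D$ and $z_1,\ldots,z_D$, using the conditional independence of the $S_i$:
\[
\escond{\Big(\phi(z)+\sum_{i=1}^D S_i\Big)^k}{D,z_1,\dots,z_D}{z}=\sum_{\substack{n_0+\cdots+n_D=k\\ n_i\ge 0}}\binom{k}{n_0,\ldots,n_D}\phi(z)^{n_0}\prod_{i=1}^D\estart{S_i^{n_i}}{z_i}.
\]
By Lemma~4.1, $\estart{S_i^{n_i}}{z_i}\le C^{n_i-1}((n_i-2)\vee 1)!\cdot M$ whenever $n_i\ge 1$, and equals $1$ when $n_i=0$. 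Combined with $\phi(z)\le c_0^{-1}M$, each multinomial term carries at most $\#\{i\ge 1:\, n_i\ge 1\}+\1(n_0\ge 1)$ factors of $M$, and all but one of them collapse to constants using $M\le C_0$.

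The remaining combinatorial sum has exactly the same shape as the one handled in the proof of Lemma~4.1: group the terms according to the number $L$ of indices $i\ge 1$ with $n_i\ge 1$ (together with the indicator whether $n_0\ge 1$), invoke Claim~4.3 to bound the inner sum $\sum\prod(n_i((n_i-1)\vee 1))^{-1}\le C_1^L/k^2$, and finally average over $D$ using that $\til\mu(j)=\sum_{i>j}\mu(i)$ inherits a finite exponential moment from $\mu$. Choosing $C$ large enough produces a convergent geometric series in $L$ and yields the claim. The main technical obstacle is the bookkeeping required to retain a single factor of $M$ on the right-hand side despite the multiple $M$'s arising from $\phi(z)^{n_0}$ and the product over $i$; this is resolved solely by the uniform bound $M\le C_0$, which lets one trade every excess power of $M$ for a harmless constant absorbed into $C^{k-1}$.
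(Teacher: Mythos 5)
Your proof is correct, but it takes a genuinely different route from the paper's. The paper never expands the $k$-th power: writing $Y=\sum_{v\in\til\cT_c}\phi(S_v)$, it proves the single tilted bound $\estart{Y e^{\lambda Y}}{z}\le C\,M$ (with $M$ your supremum on the right-hand side) for a small fixed $\lambda>0$, by factorising over the critical subtrees attached to the children of the root, bounding $\estart{e^{\lambda\sum_{v\in\cT_c}\phi(S_v)}}{\cdot}\le(1-\lambda C)^{-1}$ and $\estart{\big(\sum_{v\in\cT_c}\phi(S_v)\big)e^{\lambda\sum_{v\in\cT_c}\phi(S_v)}}{\cdot}\le(1-\lambda C)^{-1}M$ termwise via Lemma~\ref{lem:boundsonkmoment}, and controlling the number $Z\sim\til\mu$ of children through $\E{Z(1-\lambda C)^{-Z}}<\infty$; all moments are then read off from the nonnegative exponential series. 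You instead expand the $k$-th moment multinomially over the root term and the $D$ subtrees, apply Lemma~\ref{lem:boundsonkmoment} to each subtree, and rerun the combinatorics of Claim~\ref{cl:induction}. Both arguments rest on the same ingredients (the i.i.d.\ critical-subtree decomposition, Lemma~\ref{lem:boundsonkmoment}, exponential moments of $\til\mu$); the paper's generating-function device buys brevity --- no multinomial bookkeeping and no need to track a single surviving factor of $M$ --- while your computation is more elementary, produces the $((k-2)\vee 1)!$ form directly rather than absorbing an extra factor $(k-1)$ into $C^{k-1}$, and explicitly keeps the root contribution $\phi(z)$ (which the paper's displayed decomposition drops), using the correct bounds $\phi(z)\le 2d\,M$ and $M\lesssim 1$. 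When writing it up, do spell out the $n_0$-bookkeeping: the weight attached to $n_0$ is $\phi(z)^{n_0}/n_0!$ rather than the weight appearing in Claim~\ref{cl:induction}, so either sum over $n_0$ first or note that $a^{n_0}/n_0!\lesssim 1/\big(n_0((n_0-1)\vee 1)\big)$ and apply the claim with $L+1$ indices; also the claim is needed for $L=1$, a case its statement excludes but its proof covers. Finally, like the paper's own argument, yours gives the $k=1$ case only up to a multiplicative constant, which the literal $C^{k-1}$ does not display; this is harmless for the later applications.
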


\begin{proof}[\bf Proof]
We first claim that it suffices to prove that there exists $\lambda$ sufficiently small and a positive constant $C$ so that 
\begin{align}\label{eq:goalinthisproof}
	\estart{\Big(\sum_{v\in \til{\cT}_c}\phi(S_v) \Big)\cdot \exp\Big(\lambda\sum_{v\in \til{\cT}_c}\phi(S_v)  \Big)}{z} \leq C \cdot \sup_{x\sim 0} \estart{\sum_{v\in {\cT}_c}\phi(S_v)}{z+x}.
\end{align}
	Indeed, once this is established, then
	expanding the exponential we get 
	\begin{align*}
		\estart{\Big(\sum_{v\in \til{\cT}_c}\phi(S_v) \Big)\cdot \exp\Big(\lambda\sum_{v\in \til{\cT}_c}\phi(S_v)  \Big)}{z} = \sum_{n=0}^{\infty} \frac{\lambda^n}{n!}\cdot \estart{\Big(\sum_{v\in \til{\cT}_c}\phi(S_v) \Big)^{n+1}}{z},
	\end{align*}
	and hence for all $n\geq 1$ this gives 
	\[
	\estart{\Big(\sum_{v\in \til{\cT}_c}\phi(S_v) \Big)^{n}}{z} \leq C \cdot \frac{1}{\lambda^{n-1}} \cdot (n-1)! \cdot \sup_{x\sim 0} \estart{\sum_{v\in {\cT}_c}\phi(S_v)}{z+x},
	\]
	which is equivalent to the statement of the lemma by taking the constant $C$ from the statement sufficiently large. 
	
	We now turn to prove~\eqref{eq:goalinthisproof}. 
	Let $Z$ be the number of offspring of the root of $\til{\cT}_c$ which has distribution~$\til{\mu}$ and let $(U_i)_{i\ge 1}$ be i.i.d.\ uniformly chosen among the neighbours of $0$. Then we can write 
	\begin{align*}
		&\sum_{v\in \til{\cT}_c}\phi(S_v)\cdot \exp\Big(\lambda\sum_{v\in \til{\cT}_c}\phi(S_v)  \Big) = \sum_{i=1}^{Z}\Big(\sum_{v\in \cT_c^i} \phi(S_v^i)\Big)\cdot \exp\Big(\lambda \sum_{v\in \cT_c^i} \phi(S_v^i)\Big) \cdot \prod_{\substack{j\leq Z\\ j\neq i}}\exp\Big(\lambda \sum_{v\in \cT_c^j} \phi(S_v^j)\Big),
	\end{align*}
	where $(\cT_c^i)_{i\ge 1}$ are i.i.d.\ critical trees and $(S^i)_{i\ge 1}$ are independent branching random walks on~$(\cT^i_c)_{i\ge 1}$ started from $(U_i)_{i\ge 1}$. 
	Using the independence property, we then get 
\begin{align}\label{eq:firstandexp}
\begin{split}
	\estart{\Big(\sum_{v\in \til{\cT}_c}\phi(S_v) \Big)\cdot \exp\Big(\lambda\sum_{v\in \til{\cT}_c}\phi(S_v)  \Big)}{z} 
		= \sum_{k\in \N} &k\cdot \pr{Z=k}  \cdot \left(\estart{\exp\Big(\lambda \sum_{v\in \cT_c^1} \phi(S_v^1)\Big)}{z} \right)^{k-1} \\ 
		&\times \estart{\Big(\sum_{v\in \cT_c^1} \phi(S_v^1)\Big)\cdot \exp\Big(\lambda \sum_{v\in \cT_c^1} \phi(S_v^1)\Big)}{z} .
\end{split}
\end{align}
	By Lemma~\ref{lem:boundsonkmoment} we obtain for $\lambda<1/C$ with $C$ as in Lemma~\ref{lem:boundsonkmoment}
	\begin{align*}
		\estart{\Big(\sum_{v\in \cT_c^1} \phi(S_v^1)\Big)\cdot \exp\Big(\lambda \sum_{v\in \cT_c^1} \phi(S_v^1)\Big)}{z}  = \sum_{n=0}^{\infty} \frac{\lambda^n}{n!} \cdot \estart{\Big(\sum_{v\in \cT_c^1} \phi(S_v^1)\Big)^{n+1}}{z}  \\
		\leq \frac{1}{1-\lambda C}\cdot  \sup_{x\sim 0} \estart{\sum_{v\in \cT_c^1} \phi(S_v^1)}{z+x}.	\end{align*}
		We also get that for $\lambda<1/C$ 
		\begin{align*}
			\estart{\exp\Big(\lambda \sum_{v\in \cT_c^i} \phi(S_v^1)\Big)}{z} \leq \frac{1}{1-\lambda C} ,
		\end{align*}
		and hence plugging these two bounds into~\eqref{eq:firstandexp} yields 
		for $\lambda<1/C$
		\begin{align*}
			\estart{\Big(\sum_{v\in \til{\cT}_c}\phi(S_v) \Big)\cdot \exp\Big(\lambda\sum_{v\in \til{\cT}_c}\phi(S_v)  \Big)}{z} \leq \frac{1}{1-\lambda C}\cdot\sup_{x\sim 0} \estart{\sum_{v\in \cT_c^1} \phi(S_v^1)}{z+x}\cdot  \E{Z \left(\frac{1}{1-\lambda C} \right)^Z}.
		\end{align*}
		Since $\mu$ has an exponential moment, the same is true also for $\til{\mu}$. Therefore, choosing $\lambda$ sufficiently small we get that the last expectation appearing on the right hand side above is bounded by a constant, and hence this completes the proof of~\eqref{eq:goalinthisproof} and the proof of the lemma.
\end{proof}

We now move to the case of an infinite tree. We start with the case of $\cT_-$ which is slightly easier to handle.  
\begin{lemma}\label{lem.moment.infinite}
\rm{	There exists $C>0$ so that for all $z\in \Z^d$ and $n\geq 1$, we have 
	\[
	\estart{\left(\sum_{v\in \cT_-}\phi(S_v) \right)^n}{z}   \leq C^n \cdot  n!.
	\]}
	\end{lemma}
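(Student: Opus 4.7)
The plan is to decompose $\cT_-^z$ along its infinite spine and to combine the adjoint-tree moment bound just proved with a Khasminskii-type exponential moment bound for the simple random walk along the spine. Let $X_0=z, X_1, X_2, \ldots$ denote the positions of the walk at the successive spine vertices, which form a simple random walk started at $z$, and let $\til U_k$ denote the sum of $\phi$ over the walk indexed by the $\mu$-adjoint tree hanging off the $k$-th spine vertex. Since the root of $\cT_-$ has no past offspring,
\[
F \;:=\; \sum_{v\in \cT_-}\phi(S_v) \;=\; \phi(z) \;+\; \sum_{k\ge 1} \til U_k,
\]
and, conditionally on the spine $(X_k)_{k\ge 0}$, the variables $\{\til U_k\}_{k\ge 1}$ are independent with $\til U_k\mid X_k=x$ distributed as $\sum_{v\in \til\cT_c}\phi(S_v^x)$.

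First I would apply the adjoint-tree estimate~\eqref{eq:goalinthisproof} to obtain, for $\lambda$ below some fixed threshold, $\mathbb E_x[\til U_k\, e^{\lambda \til U_k}] \le C\, \Psi(x)$, where $\Psi(x):=\sup_{y\sim 0} g\phi(x+y)$. Integrating in $\lambda$ the identity $\partial_\lambda \mathbb E_x[e^{\lambda\til U_k}]=\mathbb E_x[\til U_k\, e^{\lambda\til U_k}]$ yields $\mathbb E_x[e^{\lambda \til U_k}]\le 1+C\lambda \Psi(x)\le e^{C\lambda \Psi(x)}$. Taking the product over $k\ge 1$ and using the conditional independence then gives
\[
\mathbb E_z[e^{\lambda F}] \;\le\; e^{\lambda \phi(z)}\,\mathbb E_z\!\left[\exp\!\Big(C\lambda \sum_{k\ge 1}\Psi(X_k)\Big)\right],
\]
and the prefactor $e^{\lambda\phi(z)}$ is uniformly bounded since $\phi(z)\le g\phi(z)\le 1$.

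For the remaining random-walk factor I would invoke Khasminskii's lemma: if $\|g\Psi\|_\infty\le M$, then $\mathbb E_z[\exp(\alpha \sum_{k\ge 0}\Psi(X_k))] \le (1-\alpha M)^{-1}$ whenever $\alpha M<1$. Applied with $\alpha=C\lambda$ and $\lambda$ small enough, this yields $\mathbb E_z[e^{\lambda F}] \le C'$ uniformly in $z$, and the desired bound $\mathbb E_z[F^n] \le C^n n!$ follows by expanding the exponential in a Taylor series.

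The main obstacle is controlling $\|g\Psi\|_\infty$, because the trivial pointwise bound $\Psi\le 1$ gives only the divergent sum $\sum_w g(x,w)$. I would instead use that neighbouring values of $g$ are comparable by~\eqref{green.rw}, so $\Psi(x)\lesssim g\phi(x)$, and then appeal to the convolution identity~\eqref{convol.G}, which gives $g^2\phi\lesssim G\phi$. Together with the standing hypothesis $\|G\phi\|_\infty\le 1$, this produces $\|g\Psi\|_\infty \lesssim 1$ and closes the argument. This is the step where the transient dimension $d\ge 5$, through the finiteness of $G$, is crucially used.
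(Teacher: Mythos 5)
Your argument is correct, and it rests on the same two pillars as the paper's proof — the spine decomposition of $\cT_-$ into i.i.d.\ adjoint trees conditionally on the spine walk, the adjoint-tree moment bound, and the convolution estimate $\sum_{x}g(z,x)\,g\phi(x)\lesssim G\phi(z)\le 1$ from~\eqref{convol.G} — but it packages the summation over the spine differently. The paper computes the $n$-th moment head-on: it expands multinomially over which spine vertices carry which powers, invokes the factorial-moment bound for adjoint trees, controls the combinatorics with Claim~\ref{cl:induction}, and bounds the resulting iterated Green's-function sums by $C_2^j$. You instead condition on the spine, convert the adjoint-tree bound into a conditional MGF bound $\E{e^{\lambda \til U_k}\mid X_k}\le e^{C\lambda\Psi(X_k)}$ (which you can get either from~\eqref{eq:goalinthisproof} as you do, or directly by summing the factorial moments of the preceding lemma), multiply over $k$, and finish with Khasminskii's lemma applied to the additive functional $\sum_k\Psi(X_k)$ of the spine walk, using $\|g\Psi\|_\infty\lesssim\|g^2\phi\|_\infty\lesssim\|G\phi\|_\infty\le 1$. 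This buys you a cleaner bookkeeping: no multinomial coefficients and no need for Claim~\ref{cl:induction}, and you obtain the uniform exponential moment for $\cT_-$ directly rather than via factorial moments; the price is that you must justify the routine interchanges (differentiating the MGF in $\lambda$, and the infinite conditional product, both fine by monotone convergence). It is worth noting that the paper's bound of the iterated sum by $C_2^j$ is itself exactly the Khasminskii iteration, so the two proofs are close in spirit; yours simply makes that structure explicit at the level of exponential moments. Your handling of the root term $\phi(z)$ and of $\Psi(x)\lesssim g\phi(x)$ via comparability of neighbouring values of $g$ is also correct.
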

\begin{proof}[\bf Proof]
Let $(X_m)$ denote the random walk of the spine in its natural parametrization. We then have 
\begin{align*}
	\estart{\left(\sum_{v\in \cT_-}\phi(S_v) \right)^k}{z} = \estart{\left(\sum_{n=1}^{\infty}\sum_{u\in \til{\cT}_c^n}\phi(S_u^n+X_n) \right)^k}{z},
\end{align*}
	where $(\til{\cT}_c^n)$ are i.i.d.\ adjoint critical trees and $(S^n)$ are independent branching random walks on~$(\til{\cT}_c^n)$. With this representation we now obtain
	\begin{align}\label{eq:verylongeq}
			\nonumber &\estart{\left(\sum_{v\in \cT_-}\phi(S_v) \right)^n}{z}  
		= \sum_{j=1}^{n}\sum_{\substack{n_1,\ldots, n_j\geq 1\\ \sum_{i=1}^{j}n_i=n}} {n\choose n_1,\ldots, n_j} \sum_{k_1<\ldots < k_j} \estart{\prod_{i=1}^{j} \left(\sum_{u\in \til{\cT}_c^{k_i}} \phi(S_u^{k_i}+X_{k_i})\right)^{n_i}}{z} 
	\\&=\sum_{j=1}^{n}\sum_{\substack{n_1,\ldots, n_d\geq 1\\ \sum_{i=1}^{j}n_i=n}} {n\choose n_1,\ldots, n_j}  \sum_{x_1,\ldots, x_j} g(z,x_1)\ldots g(x_{j-1},x_j) \prod_{i=1}^{j} \estart{\left(\sum_{u\in \til{\cT}_c} \phi(S_u)\right)^{n_i}}{x_i}\\
		\nonumber &=\sum_{j=1}^{n}\sum_{\substack{n_1,\ldots, n_j\geq 1\\ \sum_{i=1}^{j}n_i=n}} {n\choose n_1,\ldots, n_j}  \sum_{x_1,\ldots, x_j} g(z,x_1)\ldots g(x_{j-1},x_j) 
		\cdot \prod_{i=1}^{j} C^{n_i-1} (n_i-2)! \sup_{y\sim 0}\estart{\sum_{u\in {\cT}_c} \phi(S_u)}{x_i+y},
	\end{align}
	where for the last step we used Lemma~\ref{lem:boundsonkmoment}. Using Claim~\ref{cl:induction} the above sum reduces to 
	\begin{align}\label{eq:sumusingclaim}
	\sum_{j=1}^{n} n!\cdot \frac{C_1^j}{n^2}\cdot C^{n-j} \sum_{x_1,\ldots, x_j} g(z,x_1)\ldots g(x_{j-1},x_j) \cdot \prod_{i=1}^{j}  \sup_{y\sim 0}\estart{\sum_{u\in {\cT}_c} \phi(S_u)}{x_i+y}.
	\end{align} 
	Furthermore, for all $x,z\in \Z^d$ and $y\sim 0$ we have $g(z,y)\asymp g(z,x+y)$ and by~\eqref{convol.G}, 
	\[
	\sum_{x}g(z,x) \estart{\sum_{v\in \cT_c}\phi(S_v)}{x} = \sum_{x,y}g(z,x)g(x,y)  \varphi(y) \lesssim G\varphi(z) \lesssim 1. 
	\]
	These now imply that there exists a positive constant~$C_2$ so that 
	\[
	\sum_{x_1,\ldots, x_j} g(z,x_1)\ldots g(x_{j-1},x_j) \cdot \prod_{i=1}^{j}  \sup_{y\sim 0}\estart{\sum_{u\in {\cT}_c} \phi(S_u)}{x_i+y} \leq C_2^j.
	\]
	Plugging this back into~\eqref{eq:sumusingclaim} and then into~\eqref{eq:verylongeq} we conclude that 
	\begin{align*}
	\estart{\left(\sum_{v\in \cT_-}\phi(S_v) \right)^n}{z}  
	\leq (n-2)!\cdot \sum_{j=1}^{n} C^{n-d} \cdot C_2^j \cdot C_1^j,
	\end{align*}
	which by choosing $C$ sufficiently large compared to $C_1$ and $C_2$ finishes the proof.
\end{proof}

\begin{proof}[\bf Proof of Theorem~\ref{theo.expmoment}]
Lemmas~\ref{lem:boundsonkmoment} and~\ref{lem.moment.infinite} immediately imply that the statement of the theorem is true for $\cT_c$ and $\cT_-$. 
Moreover, $\cT_+$ is made of a critical tree attached to the root plus a forest of trees which are contained in a copy of $\cT_-$. Therefore the result for $\cT$ follows by an application of the Cauchy-Schwarz inequality.  
\end{proof}


\subsection{Proof of Corollary~\ref{cor.loctimeball}}
We start with two technical lemmas. Recall that for $\mathcal C\subset \mathbb Z^d$, we write $B(\mathcal C,r) = \cup_{x\in \mathcal C}B(x,r)$. 

\begin{lemma}\label{lem:boundonge}
\rm{	There exists $C_1>0$, so that the following holds. Let $\cC$ be a finite collection of points in $\Z^d$ within distance greater than $2r$ from each other. Then for all $x_0\in \Z^d$ we have 
	\begin{equation*}
	\sum_{x\in \cC}\sum_{y\in B(x,r)} G(x_0-y) \sum_{z\in \partial B(x,r)}e_{B(\cC,r)}(z)\leq C_1 \cdot r^d. 
	\end{equation*}
	}
\end{lemma}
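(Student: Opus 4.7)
The plan is to split the outer sum according to the distance of $x$ from $x_0$ and, on the far piece, exploit the approximate constancy of $G(x_0-\cdot)$ on each ball $B(x,r)$ in order to reduce the estimate to the uniform bound $\|Ge_{B(\mathcal{C},r)}\|_\infty \le C$ supplied by Theorem~\ref{theo.potentiel}. Writing $B_x = B(x,r)$, I would partition $\mathcal{C} = \mathcal{C}_n \sqcup \mathcal{C}_f$ with $\mathcal{C}_n = \{x\in\mathcal{C}:\|x-x_0\|\le 3r\}$ and $\mathcal{C}_f$ its complement; since the points of $\mathcal{C}$ are pairwise at distance greater than $2r$, the near set $\mathcal{C}_n$ has cardinality $O(1)$.

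For the near contribution I would use purely crude bounds. Monotonicity of the equilibrium measure in the set (immediate from the definition, since avoiding a larger set is harder) gives $e_{B(\mathcal{C},r)}(z)\le e_{B_x}(z)$ for $z\in B_x$, so $\sum_{z\in\partial B_x}e_{B(\mathcal{C},r)}(z)\le \cpc{B_x}\lesssim r^{d-4}$ by Proposition~\ref{lem.bcapballs}. Combined with the standard bound $\sum_{y\in B_x}G(x_0-y)\lesssim r^4$, obtained by integrating $G(w)\lesssim \|w\|^{4-d}$ in spherical shells around $x_0$, each near $x$ contributes at most $Cr^d$, and summing over the $O(1)$ near points gives $O(r^d)$.

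The key step is the far contribution. For $x\in\mathcal{C}_f$ and any $y\in B_x$, $z\in\partial B_x$, the condition $\|x-x_0\|\ge 3r$ together with the triangle inequality yields $\|x_0-y\|\asymp\|x_0-z\|\asymp\|x_0-x\|$, and hence $G(x_0-y)\asymp G(x_0-z)$ by~\eqref{Green.asymp}. Since $|B_x|\asymp r^d$, this gives $\sum_{y\in B_x}G(x_0-y)\lesssim r^d\cdot G(x_0-z)$ for any $z\in\partial B_x$, and therefore
\[
\sum_{x\in\mathcal{C}_f}\sum_{y\in B_x}G(x_0-y)\sum_{z\in\partial B_x}e_{B(\mathcal{C},r)}(z)\lesssim r^d\sum_{x\in\mathcal{C}_f}\sum_{z\in\partial B_x}G(x_0-z)\,e_{B(\mathcal{C},r)}(z).
\]
Because the balls $B_x$ are disjoint, the double sum on the right ranges over a subset of $B(\mathcal{C},r)$ and is bounded by $Ge_{B(\mathcal{C},r)}(x_0)\le C$ thanks to Theorem~\ref{theo.potentiel}, yielding the required $O(r^d)$ bound. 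The only genuine idea is recognising that on each far ball $G(x_0-\cdot)$ is essentially constant, so the two decoupled inner sums may be re-coupled into the single weighted sum against $e_{B(\mathcal{C},r)}$; beyond this I foresee no technical obstacle.
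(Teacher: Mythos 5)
Your proposal is correct and follows essentially the same route as the paper: split $\cC$ into points near $x_0$ and far from $x_0$, on the far part use $G(x_0-y)\asymp G(x_0-z)$ on each ball to re-couple the sums and invoke the uniform bound $\|Ge_{B(\cC,r)}\|_\infty\le C$ from~\eqref{Upper.theo}, and on the near part use $e_{B(\cC,r)}\le e_{B(x,r)}$, $\cpc{B(x,r)}\lesssim r^{d-4}$ and $\sum_{y\in B(x_0,Cr)}G(x_0-y)\lesssim r^4$. The only cosmetic difference is that you count the $O(1)$ near points individually while the paper sums $G(x_0-\cdot)$ over the union of the disjoint near balls, which is immaterial.
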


\begin{proof}[\bf Proof]
We let 
\[
A(x_0) = \{ x\in \cC: \norm{x-x_0}\geq 2r\}.
\]
By~\eqref{Green.asymp}, there exists $C_1>0$, such that for all $x\in A(x_0)$, all $y\in B(x,r)$ and $z\in \partial B(x,r)$,
\[
G(x_0-y) \leq C_1 \cdot G(x_0 -z).
\]
We then have 
\begin{align*}
	\sum_{x\in A(x_0)}\sum_{y\in B(x,r)} G(x_0-y) \sum_{z\in \partial B(x,r)}e_{B(\cC,r)}(z) &\leq C_1C_2 r^d\sum_{x\in A(x_0)}\sum_{z\in \partial B(x,r)} G(x_0-z)e_{B(\cC,r)}(z)\\ &\leq C_1C_2 r^d \sum_{x\in \cC}\sum_{z\in \partial B(x,r)} G(x_0-z)e_{B(\cC,r)}(z)\leq C_4 r^d,
\end{align*}
where for the last step we used~\eqref{Upper.theo}. 
For the sum over $x\notin A(x_0)$, we have using again~\eqref{Green.asymp}, 
\begin{align*}
	\sum_{x\notin A(x_0)}\sum_{y\in B(x,r)} G(x_0-y) \sum_{z\in \partial B(x,r)}e_{B(\cC,r)}(z) &\leq \cpc{B(x,r)}\cdot  \sum_{x\notin A(x_0)}\sum_{y\in B(x,r)} G(x_0-y) \\
	&\lesssim r^{d-4} \sum_{x\in B(x_0,3r)} G(x_0-x) \asymp r^{d-4} \cdot r^4 = r^{d}.
	\end{align*}
	This now concludes the proof.
\end{proof}

\begin{lemma}\label{lem.double}
\rm{ There exists $c_1>0$, such that for any finite set $\cC\subset \mathbb Z^d$, and any $r\ge 1$, one has 
$$\cpc{B(\cC,r)} \ge c_1\cdot \cpc{B(\cC,3r)}. $$ }
\end{lemma}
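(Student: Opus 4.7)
My approach is to apply Proposition~\ref{pro:lastpassagedec} to both capacities with the same reference set, reducing the problem to a termwise comparison of future-hitting probabilities, and then to prove this comparison using the structural decomposition of the infinite tree together with Theorem~\ref{theo-zhu}.

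Set $K = B(\cC,r)$, $K' = B(\cC,3r)$, and let $B$ be any finite set containing $K'$ and all points at distance $1$ from $K'$ (for instance $B = B(\cC, 3r+1)$). Applying Proposition~\ref{pro:lastpassagedec} twice with this same $B$, and using that $K \subseteq K' \subseteq B$ so that $K \cup B = K' \cup B = B$, we obtain
\begin{align*}
\cpc{K}  &= \sum_{w\in \partial B}\mathbb{P}(\cT_-^w\cap B=\emptyset,\ \cT_+^w\text{ hits }K),\\
\cpc{K'} &= \sum_{w\in \partial B}\mathbb{P}(\cT_-^w\cap B=\emptyset,\ \cT_+^w\text{ hits }K').
\end{align*}
Since $\{\cT_+^w\text{ hits }K\}\subseteq\{\cT_+^w\text{ hits }K'\}$, it would suffice to prove, for each $w\in\partial B$, the termwise inequality
\[
\mathbb{P}(\cT_+^w\text{ hits }K \mid \cT_-^w\cap B=\emptyset) \ge c\cdot \mathbb{P}(\cT_+^w\text{ hits }K' \mid \cT_-^w\cap B=\emptyset).
\]

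To establish this termwise inequality I would exploit that, conditional on the positions of the spine of $\cT^w$, the past and future adjoint subtrees are independent. Thus conditioning on $\{\cT_-^w\cap B=\emptyset\}$ biases only the law of the spine (and the past subtrees), while the future adjoint subtrees remain conditionally independent critical trees. The matter then reduces to proving a "fresh-start" statement: uniformly in $w$, in $\cC$, and in the conditional law of the spine, whenever $\cT_+^w$ enters some ball $B(c_i,3r)\subseteq K'$, it also enters $B(c_i,r)\subseteq K$ with probability bounded below. Decomposing according to the first-hit vertex of $K'$ and applying Theorem~\ref{theo-zhu}, one handles cleanly the case where this first-hit vertex lies on the spine: the spine continuation beyond it is an infinite branching random walk rooted at some $y\in B(c_i,3r)$, and by Theorem~\ref{theo-zhu} combined with Proposition~\ref{lem.bcapballs} it hits $B(c_i,r)$ with probability $\asymp \cpc{B(c_i,r)}/d(y,B(c_i,r))^{d-4}\asymp 1$.

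The hard part will be the complementary case, where the first-hit vertex of $K'$ lies on a finite critical subtree rather than on the spine. In that case the descendants of the first-hit vertex alone form a finite $\mu$-BGW subtree rooted at $y$, which by Theorem~\ref{theo-zhu} only hits $B(c_i,r)$ with probability $\asymp r^{-2}$, far from a constant. To recover a constant hitting probability one must combine the contributions of the many adjoint subtrees of $\cT_+^w$ hanging off spine vertices at positions within distance $O(r)$ of $K$ — by a second-moment / spatial-averaging argument exploiting that the spine, once it reaches the vicinity of $K'$, spends time $\asymp r^2$ within distance $2r$ of $K$ and drops an adjoint subtree (with mean variance given by $\sigma^2$) at each such position — while simultaneously controlling the effect of the conditioning $\{\cT_-^w\cap B=\emptyset\}$ on the spine distribution.
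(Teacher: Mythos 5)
There is a genuine gap: your reduction leaves the actual content of the lemma unproven. After applying Proposition~\ref{pro:lastpassagedec} twice, everything hinges on the termwise ``fresh-start'' inequality, and your treatment of it does not work as described. First, by the paper's convention the spine belongs to $\cT_-$, not to $\cT_+$, so $\cT_+^w$ is a forest of \emph{finite} adjoint critical trees hanging off the spine; the ``easy case'' in which the first-hit vertex of $K'=B(\cC,3r)$ lies on the spine and the continuation is an infinite BRW never occurs. Every hit of $K'$ by $\cT_+^w$ happens inside a finite critical subtree, and the descendants of the first-hit vertex then reach $B(c_i,r)$ only with probability of order $r^{-2}$ (by~\eqref{hitting-critical} and Proposition~\ref{lem.bcapballs}), not a constant. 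To recover a constant one must show that, conditionally on $\{\cT_+^w\text{ hits }K'\}$ \emph{and} on $\{\cT_-^w\cap B=\emptyset\}$, the spine typically approaches within distance $O(r)$ of the relevant center and drops order $r^2$ further subtrees there --- i.e.\ a hitting estimate for the conditioned tree. You acknowledge this is ``the hard part'' and only sketch a second-moment/spatial-averaging plan without executing it; but this is exactly the kind of delicate past/future decorrelation under conditioning that the paper devotes Sections~\ref{sec.Proof1} and~\ref{sec.Lowerbound} to, so it cannot be waved through. As stated, the proposal proves nothing beyond the easy monotonicity $\cpc{B(\cC,r)}\le \cpc{B(\cC,3r)}$.

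For comparison, the paper's proof is a one-liner in the opposite spirit: choose $N=N(d)$ lattice vectors $v_1,\dots,v_N$ with $B(0,3r)\subseteq \bigcup_j B(v_j,r)$, so that $B(\cC,3r)\subseteq \bigcup_{j=1}^N \bigl(B(\cC,r)+v_j\bigr)$, and then invoke subadditivity and translation invariance of the branching capacity (proved in~\cite{Zhu1}) to get $\cpc{B(\cC,3r)}\le N\,\cpc{B(\cC,r)}$. If you want to salvage your route you would essentially have to reprove conditioned hitting estimates of the strength of Theorem~\ref{theo-zhu}; the covering argument avoids all of that.
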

\begin{proof}[\bf Proof]
Note that $B(\cC,3r)$ can be covered by a finite number of translates of $B(\cC,r)$, so the lemma just follows from sub-additivity of branching capacity proved in~\cite{Zhu1}. 
\end{proof}

\begin{proof}[\bf Proof of Corollary~\ref{cor.loctimeball}] 
Let $\cC$ be a finite subset of $\mathbb Z^d$ and $r\ge 1$ be given. 
By discarding some points, one can find a subset $\cC'\subseteq \cC$, whose points are all at distance greater than $2r$ from each other, and such that $B(\cC,r) \subseteq B(\cC',3r)$. 

We now define a function $\phi$ by taking it to be equal to $0$ outside of $B(\cC',r)$ and for every $y\in B(x,r)$ with $x\in \cC'$ we define
\begin{align}\label{eq:defofphi}
\phi(y)= \frac{1}{C_1r^d} \cdot \sum_{z\in \partial B(x,r)} e_{B(\cC',r)}(z),
\end{align}
where $C_1$ is the constant from Lemma~\ref{lem:boundonge} so that $\|G\varphi \|_\infty \le 1$. 
We then have using Chernoff's bound
\begin{align*}
	\pr{\ell_{\cT^0}(B(x,r))\geq t, \ \forall \ x\in \cC} & \leq \pr{\ell_{\cT^0}(B(x,r))\geq t, \ \forall \ x\in \cC'} \\
	& \le \pr{\sum_{x\in \cC'}\phi(x) \cdot \ell_{\cT^0}(B(x,r)) \geq t \cdot \frac{\cpc{B(\cC',r)}}{C_1r^d} }\\
	& \le 2\exp\Big(-\kappa \cdot t \cdot \frac{\cpc{B(\cC',r)}}{C_1r^d}\Big)\\
	&\le 2 \exp\Big(-\kappa c_1\cdot t \cdot \frac{\cpc{B(\cC,r)}}{C_1r^d}\Big), 
\end{align*}
where we used Theorem~\ref{theo.expmoment} at the third line, and Lemmas~\ref{lem.double} and~\ref{lem-shrink} at the last one.  
This concludes the proof of the corollary. 
\end{proof}

\begin{remark}\label{rem.balls.rw}
\rm{In the case of a simple random walk on $\mathbb Z^d$, $d\ge 3$, one can recover Theorem 1.2 of~\cite{AS23a} by using a similar argument (which in the setting of standard random walks is much simpler). This allows also to remove the hypothesis (1.4) from there. 
}
\end{remark}


\section{The Lower Bound in Theorem~\ref{theo.potentiel}} \label{sec.Lowerbound}

\subsection{Preliminary estimates}

	Given $A$ and $B$ two disjoint subsets of $\mathbb Z^d$, we say 
that a tree-indexed random walk hits the set~$A$ before the set $B$, if the first vertex in the lexicographical order of the tree at which the walk is in~$A\cup B$, the walk is in $A$. We say that the tree indexed random walk hits the set $A$ after the set~$B$, if it hits the set $A$ but not before the set $B$.

\begin{lemma}\label{lem.pre.1}
\rm{
There exist positive constants $c$ and $L_0\ge 3$, such that for any $L\ge L_0$, any $R\ge 1$, any finite set $K\subseteq B(0,R)$, and any $x\in  B(0,LR)\smallsetminus B(0,2R)$, 
$$\mathbb P\Big(\cT^x_c \text{ hits }K \text{ before } \partial B(0,L^2R) \Big)\ge c \cdot \frac{\cpc{K}}{(LR)^{d-2}}. $$ 
}
\end{lemma}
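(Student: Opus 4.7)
The bound $\mathbb{P}(\cT_c^x \text{ hits } K) \geq c\,\cpc{K}/(LR)^{d-2}$ follows immediately from~\eqref{hitting-critical}, since $d(x,K)\le \|x\|+R\le (L+1)R$. My task is therefore to upper bound
\[
\mathbb{P}\bigl(\cT_c^x \text{ hits } K \text{ after } \partial B(0,L^2R)\bigr) \;=\; \mathbb{P}(\cT_c^x \text{ hits } K) - \mathbb{P}\bigl(\cT_c^x \text{ hits } K \text{ before } \partial B(0,L^2R)\bigr)
\]
by, say, half of the Zhu lower bound, provided $L\ge L_0$.

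To do this I would apply Proposition~\ref{prop.hit.Zhu} with $A = K\cup \partial B(0,L^2R)$, so that
\[
\mathbb{P}(\text{first hit of }A\text{ is in }K) \;=\; \sum_{\substack{\gamma: x\to K \\ \gamma[0,|\gamma|-1]\cap A=\emptyset}} s(\gamma)\prod_{\ell=0}^{|\gamma|-1} b_A(\gamma(\ell)),
\]
and compare this sum to the analogous expression for $\mathbb{P}(\cT_c^x \text{ hits } K)$. I would restrict attention to "good" paths $\gamma$ that (i) have length at most $T:=C(LR)^2$ and (ii) stay inside $B(0,L^2R/2)$, and argue in two steps.

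\emph{Step 1 (pointwise comparison of $b_A$ and $b_K$).} For any $z\in B(0,L^2R/2)$ one has $d(z,\partial B(0,L^2R))\gtrsim L^2R$, so by~\eqref{hitting-critical} applied to the adjoint tree,
\[
\mathbb{P}\bigl(\widetilde{\cT}_c^{\,z}\cap \partial B(0,L^2R)\neq\emptyset\bigr) \;\lesssim\; \frac{(L^2R)^{d-4}}{(L^2R)^{d-2}} \;=\; \frac{C}{(L^2R)^2}.
\]
Since $b_K(z)\ge \widetilde{\mu}(0)$ is bounded away from $0$, this yields $b_A(z)/b_K(z)\ge 1-C'/(L^2R)^2$. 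Along a good path of length $\le T = C(LR)^2$, the cumulative product is therefore at least $1-C'T/(L^2R)^2 = 1-C''/L^2$, which exceeds $1/2$ for $L$ sufficiently large.

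\emph{Step 2 (most paths are good).} I would show $\sum_{\gamma\text{ good}}s(\gamma)\prod_\ell b_K(\gamma(\ell)) \ge (1-\epsilon(L))\,\mathbb{P}(\cT_c^x\text{ hits }K)$ with $\epsilon(L)\to 0$. Long paths $(|\gamma|>T)$ are controlled by a first-moment bound in the spirit of Lemma~\ref{lem:exittime}: mimicking its proof (shell decomposition plus Kolmogorov's survival estimate) gives $\mathbb{E}[\tau\,\mathbf{1}(\tau<\infty)]\lesssim \cpc{K}/(LR)^{d-4}$, so by Markov the long-path contribution is at most $\mathbb{P}(\cT_c^x\text{ hits }K)/L^2$. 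Paths that escape $B(0,L^2R/2)$ within time $T$ require an SRW displacement of order $L\sqrt{T}$-many standard deviations and are controlled by Gaussian tail bounds $\exp(-cL^2)$; in combination with Lemma~\ref{lem:avoidprob} applied to annular shells around $K$ (to kill the $\prod b_K$ factors accumulated whenever a returning excursion approaches $K$), this contribution is also negligible compared to $\mathbb{P}(\cT_c^x\text{ hits }K)$.

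\textbf{Main obstacle.} The most delicate point is making Step~2 uniform in $R\ge 1$: the naive estimate on paths that exit $B(0,L^2R/2)$ and return to $K$, using only $s(\gamma)\le $ SRW hitting probability, yields a bound of order $\exp(-cL^2)\cdot \mathrm{Cap}_{\mathrm{SRW}}(K)/(L^2R)^{d-2}$, which for very large $R$ and low-capacity $K$ may exceed $\mathbb{P}(\cT_c^x\text{ hits }K)\asymp \cpc{K}/(LR)^{d-2}$. Resolving this requires exploiting the damping provided by $\prod b_K$: each time a returning excursion enters a shell $\{y: \|y-K\|\le r\}$, Lemma~\ref{lem:avoidprob} contributes a multiplicative factor $\exp(-c\,\cpc{K}/r^{d-4})$, which in aggregate suffices to beat the $\mathrm{Cap}_{\mathrm{SRW}}(K)$ factor and produce an $R$-independent exclusion bound.
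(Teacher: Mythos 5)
Your overall strategy (lower bound the ``before'' probability by subtracting an upper bound for ``hits $K$ after $\partial B(0,L^2R)$'' from the Zhu estimate) is the same as the paper's opening and closing step, but the way you try to control the ``after'' event is where the proposal breaks down, and it is much more complicated than what the paper does. The paper never decomposes over paths or generations at all: it introduces the set $\mathcal U^x$ of first-crossing vertices of $\partial B(0,L^2R)$ (vertices whose position is on the sphere while all strict ancestors stay strictly inside), notes that on the event ``hits $K$ after $\partial B(0,L^2R)$'' some descendant subtree of a vertex of $\mathcal U^x$ must hit $K$, uses that conditionally on $\mathcal U^x$ these subtrees are i.i.d.\ copies of $\cT_c$ started on the distant sphere, so each hits $K$ with probability $\lesssim \cpc{K}/(L^2R)^{d-2}$ by~\eqref{hitting-critical}, and finally uses criticality to get $\E{|\mathcal U^x|}=\sum_n \E{Z_n}\,\mathbb P_x(\tau=n)=1$. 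This gives $\mathbb P(\cT_c^x \text{ hits } K \text{ after } \partial B(0,L^2R))\lesssim \cpc{K}/(L^2R)^{d-2}$, smaller than $\mathbb P(\cT_c^x\cap K\neq\emptyset)\gtrsim \cpc{K}/(LR)^{d-2}$ by a factor $\asymp L^{-(d-2)}$, and the lemma follows for $L$ large. No time control, no comparison of $b_A$ with $b_K$, and no uniformity-in-$R$ issue ever arises.

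In your proposal, Step 1 is fine, but Step 2 contains a genuine gap, essentially the one you flag yourself. First, the bound $\E{\tau\,\1(\tau<\infty)}\lesssim \cpc{K}/(LR)^{d-4}$ does not follow by ``mimicking'' Lemma~\ref{lem:exittime}: that proof works because the target is a ball of radius $R$ at distance $\asymp R$, so the innermost-shell term can be bounded by the bare survival probability $1/R^2$, which happens to match $\cpc{B(0,R)}/R^{d-2}$. For a general $K\subseteq B(0,R)$ one always has $\cpc{K}\lesssim R^{d-4}\ll (LR)^{d-4}$, and the analogous innermost term (walk near $K$ at a late generation, then subtree survives and hits) produces an additive contribution of order $(LR)^{-2}$ to $\mathbb P(\tau>T,\tau<\infty)$ that is not proportional to $\cpc{K}$ and can dominate $\cpc{K}/(LR)^{d-2}$; removing it requires a further (essentially recursive) argument that exploits the hitting of $K$ in the near region, which you have not supplied. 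Second, the control of paths escaping $B(0,L^2R/2)$ is not uniform in $R$, as you note, and the proposed fix is quantitatively insufficient: the damping factors $\exp(-c\,\cpc{K}/r^{d-4})$ from Lemma~\ref{lem:avoidprob} are only bounded away from $1$ at scales $r$ comparable to the diameter of $K$, so along a returning excursion they contribute in total only a constant factor, while the discrepancy you need to beat, e.g.\ for $K=B(0,R)$ where $\mathrm{Cap}_{\mathrm{SRW}}(K)/\cpc{K}\asymp R^2$, grows with $R$ at fixed $L$. So as written the ``main obstacle'' is not resolved, and the argument does not close; the paper's stopping-line first-moment trick is the missing idea that bypasses all of this.
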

\begin{proof}[\bf Proof]
For two vertices $u,v\in \mathcal T_c$, let us write $v<u$ if $v$ is on the geodesic going from the root to $u$, and different from $u$. Then consider the set 
$$\mathcal U^x = \{u\in \cT_c : S_u^x \in \partial B(0,L^2R) \text{ and } S_v^x \in B(0,L^2R)\setminus \partial B(0,L^2R) \ \forall v<u\}. $$ 
If $\cT^x_c$ hits $K$, but only after hitting $\partial B(0,L^2R)$, there must exist $u\in \mathcal U^x $, whose tree of descendants hits $K$. 
Since conditionally on $\mathcal U^x $, the descendant trees of its vertices are independent copies of $\cT_c$, we get 
\begin{align}\label{eq.hit.before}
 \mathbb P\Big(\cT^x_c \text{ hits }K \text{ after } \partial B(0,L^2R) \mid \mathcal U^x  \Big)&\le |\mathcal U^x|\cdot \sup_{x\in \partial B(0,L^2R)} \mathbb P(\cT^x_c \cap K \neq \emptyset)  
& \lesssim |\mathcal U^x|\cdot \frac{\cpc{K}}{(L^2R)^{d-2}}, 
\end{align} 
using~\eqref{hitting-critical} for the last inequality. Note also that for any $x\in B(0,LR)$, 
$$\mathbb E[|\mathcal U^x|] =  \sum_{n\ge 0} \mathbb E[Z_n]\cdot \mathbb P_x(\tau = n) = 1,$$
where $Z_n$ is the number of vertices of generation $n$ in $\cT_c$, and $\tau$ is the hitting time of $\partial B(0,L^2R)$ by a simple random walk. 
Therefore taking expectation on both sides of~\eqref{eq.hit.before} gives
$$\mathbb P\Big(\cT^x_c \text{ hits }K \text{ after } \partial B(0,L^2R)  \Big) \lesssim \frac{\cpc{K}}{(L^2R)^{d-2}}. $$ 
Hence, applying again~\eqref{hitting-critical} gives for $L$ large enough, 
\begin{align*}
 \mathbb P\Big(\cT^x_c \text{ hits }K \text{ before } \partial B(0,L^2R) \Big) & = \mathbb P(\cT^x_c\cap K\neq \emptyset) - \mathbb P\Big(\cT^x_c \text{ hits }K \text{ after } \partial B(0,L^2R) \Big) \\
 & \gtrsim \frac{\cpc{K}}{(LR)^{d-2}}, 
 \end{align*} 
 concluding the proof of the lemma.
\end{proof}

Recall the definitions of $s(\gamma)$ and $b_A(x)$ given in Section~\ref{sec.hit}. 
For a path $\gamma$ define 
$$b_A(\gamma) = \prod_{\ell = 1}^{|\gamma|} b_A(\gamma(\ell)), \quad\text{and}\quad \til b_A(\gamma) = \prod_{\ell = 0}^{|\gamma|-1} b_A(\gamma(\ell)).$$

\begin{lemma}\label{lem.pre.2}
\rm{There exists $L_0>0$, such that for any $L\ge L_0$, and any $R\ge 1$, 
$$
\sum_{x\in \partial B(0,R)} \ \sum_{\substack{\gamma : x\to \partial B(0,LR)\\ R^2\le |\gamma|\le L^dR^2}} s(\gamma) \cdot b_{B(0,R)}(\gamma) 
  \ge \frac {\cpc{B(0,R)}}2. 
$$
}
\end{lemma}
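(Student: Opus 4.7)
The plan is to prove the lemma in two steps: first establish an un-truncated version of the inequality using a Markov-property identity, then control the two ``bad'' length ranges by a random-walk tail estimate carried out under the equilibrium-measure conditioning.

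Throughout let $(X_n)_{n\ge 0}$ be a simple random walk on $\Z^d$, put $\tau := \tau_{\partial B(0, LR)}$ for its first hitting time of $\partial B(0, LR)$, and abbreviate $b := b_{B(0,R)}$. Unfolding the product as a sum over paths,
\[
\widetilde S := \sum_{x\in \partial B(0,R)} \estart{\prod_{\ell=1}^\tau b(X_\ell)}{x} = \sum_{x\in \partial B(0,R)}\sum_{\gamma: x \to \partial B(0,LR)} s(\gamma) b(\gamma),
\]
so $\widetilde S$ equals the left-hand side of the lemma without the length constraint, and denoting by $S$ the left-hand side as stated, one has $S\le\widetilde S$.

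\emph{Step 1: $\widetilde S \ge \cpc{B(0,R)}$.} Decomposing $\cT_-^x$ into its spine $X_0 = x, X_1, X_2, \ldots$ and the conditionally independent $\mu$-adjoint critical trees attached to each non-root spine vertex yields the spine representation
\[
e_{B(0,R)}(x) = \estart{\prod_{\ell \ge 1} b(X_\ell)}{x} \qquad (x \in \partial B(0,R)),
\]
because $b(X_\ell)$ is exactly the conditional probability that all past adjoint trees hanging off $X_\ell$ avoid $B(0,R)$, and the convention $b(v)=0$ for $v\in B(0,R)$ additionally forces the spine itself to avoid $B(0,R)$ at positive times. Applying the Markov property at $\tau$ and writing $h(w) := \estart{\prod_{\ell \ge 1} b(X_\ell)}{w} \le 1$, this becomes
\[
e_{B(0,R)}(x) = \estart{\prod_{\ell=1}^\tau b(X_\ell)\, h(X_\tau)}{x}.
\]
Summing over $x \in \partial B(0,R)$ (the support of $e_{B(0,R)}$) and using $h \le 1$ gives $\cpc{B(0,R)} \le \widetilde S$.

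\emph{Step 2: the length cut-off loses at most $\cpc{B(0,R)}/2$.} By~\eqref{intro-1} and~\eqref{Green.asymp}, for $w \in \partial B(0, LR)$,
\[
1 - h(w) = \pr{\cT_-^w \cap B(0,R) \ne \emptyset} \asymp G(w)\,\cpc{B(0,R)} \asymp L^{-(d-4)},
\]
so one can choose $L_0=L_0(d)$ such that $h \ge 1/2$ on $\partial B(0,LR)$ whenever $L \ge L_0$. Repeating the Step~1 computation with the indicator $\1(\tau \in [R^2, L^d R^2])$ inserted, and once more using $h\le 1$, yields
\[
S \;\ge\; \cpc{B(0,R)} - \cpc^{\mathrm{bad}},\qquad \cpc^{\mathrm{bad}} := \sum_{x \in \partial B(0,R)} e_{B(0,R)}(x) \cdot \prcond{\tau \notin [R^2, L^d R^2]}{E_x}{x},
\]
where $E_x := \{\cT_-^x \cap B(0,R) = \emptyset\}$ is the equilibrium event. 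It therefore suffices to show that, uniformly in $x\in\partial B(0,R)$,
\[
\prcond{\tau \notin [R^2, L^d R^2]}{E_x}{x} \le 1/2 \qquad \text{whenever } L \ge L_0.
\]
Under $E_x$ the spine is a SRW conditioned to avoid $B(0,R)$ at positive times, re-weighted by the soft multiplicative factors $b(X_\ell)$; in the annulus $\{\|v\|\ge LR\}$ these factors differ from $1$ by only $O(L^{-(d-4)})$, so for $L$ large enough the spine is a small perturbation of the classical Doob transform of SRW conditioned not to return to $B(0,R)$. That limiting walk exits $B(0,LR)$ on the diffusive scale $(LR)^2$, with a polynomial Bessel-type lower tail and an exponential upper tail; enlarging $L_0$ if necessary, both $\prcond{\tau < R^2}{E_x}{x}$ and $\prcond{\tau > L^d R^2}{E_x}{x}$ become $\le 1/4$, and the lemma follows.

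\emph{The hard part} will be the final conditional tail bound. One must argue that the joint Doob transform coming from the hard constraint $\{X_\ell \notin B(0,R)\ \forall \ell \ge 1\}$ together with the soft multiplicative tilt by the adjoint weights $b(X_\ell)$ retains the diffusive scaling of SRW uniformly in $R$, so that the fast-exit probability is polynomially small in $1/L$ and the slow-exit probability is exponentially small in $L^{d-2}$. The very generous upper cut-off $L^d R^2$ in the statement is exactly what gives the exponential slack needed on the slow side, and is the reason the lemma asks only for $L\ge L_0$ rather than a sharper threshold.
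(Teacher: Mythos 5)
Your Step 1 is correct and is essentially the paper's own starting point: the identity $e_{B(0,R)}(x)=\estart{\prod_{\ell\ge 1}b(X_\ell)}{x}$, the Markov property of the spine at the first hitting time of $\partial B(0,LR)$, and the bound $h\le 1$ give exactly the un-truncated inequality, and your reduction $S\ge \cpc{B(0,R)}-\sum_x e_{B(0,R)}(x)\,\prcond{\sigma^x\notin[R^2,L^dR^2]}{E_x}{}$ is also sound. The problem is that everything after that is asserted, not proved: the uniform conditional tail bound $\prcond{\sigma^x\notin[R^2,L^dR^2]}{E_x}{}\le 1/2$ is precisely the substance of the lemma, and you explicitly defer it ("the hard part") with a heuristic about the tilted spine being a small perturbation of the Doob transform of the walk conditioned to avoid $B(0,R)$. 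No quantitative comparison between the law tilted by $\prod_\ell b(X_\ell)$ and the SRW (or its $h$-transform) is given, and none is obvious uniformly in $R$ and $x$; so as written the proposal has a genuine gap at its central step. (Your quantitative guesses are also off: in the actual estimates the fast-exit contribution is exponentially small in $\sqrt L$, while the slow-exit contribution is only polynomially small, of order $L^{-2}$.)

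For comparison, the paper does not attempt a pointwise conditional estimate at all; it bounds the two bad contributions after summing over $x\in\partial B(0,R)$, which lets capacity identities absorb the sums. For $\{\sigma^x>L^dR^2\}$ it passes to the last visit $\tau^x$ of the spine to $\partial B(0,LR)$, reroots there, and uses Proposition~\ref{prop.hit.Zhu} to recognise the weighted path sum as the probability that a critical BRW from $y\in\partial B(0,LR)$ reaches $B(0,R)$ along a spine of length $>L^dR^2$; Kolmogorov's estimate $\pr{Z_n\neq 0}\lesssim 1/n$ and the SRW capacity of $B(0,LR)$ then give a total bound of order $(LR)^{d-2}/(L^dR^2)\asymp \cpc{B(0,R)}/L^2$. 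For $\{\sigma^x<R^2\}$ it introduces the intermediate spheres $\Sigma_1=\partial B(0,R\sqrt L/2)$ and $\Sigma_2=\partial B(0,R\sqrt L)$, splits according to whether the spine returns to $\Sigma_2$ after $\sigma^x$ (cost $L^{-(d-2)/2}$), and otherwise uses that crossing an annulus of width $\asymp R\sqrt L$ in time $<R^2$ costs $\exp(-c\sqrt L)$, again combined with hitting estimates and SRW capacity, giving $\lesssim \exp(-c\sqrt L)\cpc{B(0,R)}$. Some argument of this type (or a genuine proof of your tilted-walk comparison) is needed to close your Step 2; without it the lemma is not established.
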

\begin{proof}[\bf Proof]
For $x\in \partial B(0,R)$, consider a random walk indexed by $\cT$ starting from $x$, and let $\sigma^x$ be the first hitting time of $\partial B(0,LR)$ by the spine (using its intrinsic labelling). Then one has by definition, 
$$ \sum_{\substack{\gamma : x\to \partial B(0,LR)\\ R^2\le |\gamma|\le L^dR^2}} s(\gamma) \cdot b_{B(0,R)}(\gamma) \ge  
\frac{\mathbb P(\cT_-^x\cap B(0,R)=\emptyset, \, R^2\le \sigma^x \le L^dR^2)}{\sup_{y\in \partial B(0,LR)} \mathbb P(\cT^y_-\cap B(0,R) =\emptyset)}, $$ 
and thus by~\eqref{hitting-infinite} and Proposition~\ref{lem.bcapballs} one has for $L$ large enough, 
\begin{equation}\label{eq1.lem.pre.2} 
\sum_{\substack{\gamma : x\to \partial B(0,LR)\\ R^2\le |\gamma|\le L^dR^2}} s(\gamma) \cdot b_{B(0,R)}(\gamma) \ge  
\frac 34\cdot \mathbb P(\cT_-^x\cap B(0,R)=\emptyset, \, R^2\le \sigma^x \le L^dR^2). 
\end{equation} 
Recall that typically $\sigma^x$ is of order $L^2R^2$, hence 
one can expect the two events $\{\sigma^x> L^dR^2\}$ and $\{\sigma^x < R^2\}$ to have small probability, provided that $L$ is large enough. 
We start considering the first one. Let $\tau^x$ be the last visiting time of $\partial B(0,LR)$ by the walk on the spine: 
$$\tau^x  =\sup \{n\ge 0 : X^x(n) \in B(0,LR) \}. $$  
Then by rerooting the tree at the vertex corresponding to $\tau^x$, 
we can write using Proposition~\ref{prop.hit.Zhu}, and denoting by $Z_n$ the number of vertices at generation $n$ in a critical tree,   
\begin{align*} 
& \sum_{x\in \partial B(0,R)}  \mathbb P(\cT_-^x\cap B(0,R)=\emptyset, \, \sigma^x > L^dR^2)  \le  \sum_{x\in \partial B(0,R)} \mathbb P(\cT_-^x\cap B(0,R)=\emptyset, \,  \tau^x > L^dR^2)\\
&= \sum_{x\in \partial B(0,R)}\sum_{y\in \partial B(0,LR)}\mathbb P(\cT_-^x\cap B(0,R)=\emptyset, \,  \tau^x > L^dR^2, X^x(\tau^x) = y) \\
& = \sum_{y\in \partial B(0,LR)} \Big( \sum_{\substack{\gamma : y \to B(0,R) \\ |\gamma|> L^dR^2}} s(\gamma)\cdot \til b_{B(0,R)}(\gamma)\Big) \cdot \mathbb P(\cT^y_- \cap B(0,R) = \emptyset, X^y(n)\in B(0,LR)^c, \, \text{for all }n\ge 1) \\
 & \le \sum_{y\in \partial B(0,LR)} \mathbb P(Z_{L^dR^2} \neq 0) \cdot \mathbb P_y(X_n\in B(0,LR)^c, \, \text{for all }n\ge 1) \\
 & \lesssim \frac{\text{Cap}(B(0,LR))}{L^dR^2}\lesssim \frac{\cpc{B(0,R)}}{L^2},
 \end{align*}
 using also Kolmogorov's estimate at the last line, see e.g.~\cite[Theorem 1, p.19]{AN}. 
  Therefore, for~$L$ large enough, one has for any $R\ge 1$, 
  \begin{equation}\label{eq2.lem.pre.2}
 \sum_{x\in \partial B(0,R)}  \mathbb P(\cT_-^x\cap B(0,R)=\emptyset, \, \sigma^x > L^dR^2)  \le \frac {\cpc{B(0,R)}}{10}. 
   \end{equation}
   It remains to consider the event $\{\sigma^x< R^2\}$, which is more complicated to handle. We introduce two intermediate surfaces: 
   $$\Sigma_1 =\partial B(0,\frac {R\sqrt L}2), \quad \text{and}\quad \Sigma_2 = \partial B(0,R\sqrt L). $$  
   Define $\tau_1^x$ and $\tau_2^x$ to be the last visiting times of $\Sigma_1$ and $\Sigma_2$ respectively by the spine (for its natural parametrisation). 
   First observe that 
     \begin{align*}
     \mathbb P(\cT^x_- \cap B(0,R) = \emptyset, \, \tau_2^x>\sigma^x)
     & \leq \pr{\tau_2^x>\sigma^x} \pr{\cF_-^x[0,\sigma^x]\cap B(0,R)=\emptyset} \\
   & \lesssim \frac{1}{L^{\frac{d-2}{2}}}\cdot  \pr{\cF_-^x[0,\sigma^x]\cap B(0,R)=\emptyset}.
    \end{align*} 
  Using~\eqref{hitting-infinite} we see that for $L$ sufficiently large
  \begin{align*}
  	\pr{\cT^x_- \cap B(0,R)=\emptyset} &\geq  \pr{\cF_-^x[0,\sigma^x]\cap B(0,R)=\emptyset} \cdot \inf_{y\in \partial B(0,RL)} \pr{\cT_-^y\cap B(0,R)=\emptyset} \\ 
  	&\gtrsim \pr{\cF_-^x[0,\sigma^x]\cap B(0,R)=\emptyset},
  \end{align*}  
         and hence plugging this above we deduce
         \begin{align}\label{eq3.lem.pre.2}
         	\mathbb P(\cT^x_- \cap B(0,R) = \emptyset, \, \tau_2^x>\sigma^x) \lesssim \frac{1}{L^{\frac{d-2}{2}}}\cdot \pr{\cT^x_- \cap B(0,R)=\emptyset}. 
         \end{align}
    Now, denoting by $H_{\Sigma_1}$ and $H^+_{\Sigma_2}$ for the first hitting time of $\Sigma_1$ and first return time to $\Sigma_2$ respectively, by a simple random walk, one can write for some constant $c>0$, 
 \begin{align}\label{eq4.lem.pre.2}
\nonumber &  \sum_{x\in \partial B(0,R)} \mathbb P(\cT^x_- \cap B(0,R) = \emptyset, \, \tau_2^x<R^2) \\
\nonumber  & = \sum_{x\in \partial B(0,R)} \sum_{y\in \Sigma_1, z\in \Sigma_2} \mathbb P(\cT^x_- \cap B(0,R) = \emptyset, \, \tau_2^x<R^2, \, X^x(\tau_1^x)=y, X^x(\tau_2^x) = z) \\
\nonumber  & {\le} \sum_{y\in \Sigma_1, z\in \Sigma_2} \mathbb P(\cT^y_c \cap B(0,R)\neq \emptyset) \cdot \Big(\sum_{\substack{\gamma : y\to z\\ \gamma \subseteq \Sigma_1^c,\, |\gamma|< R^2}} s(\gamma)\Big) \cdot \mathbb P_z(H_{\Sigma_2}^+= \infty),
\end{align}
where the last inequality follows from Proposition~\ref{prop.hit.Zhu}. Using~\eqref{hitting-critical} for a positive constant $C$ we can now upper bound this last expression by 
\begin{align}
\nonumber  & \frac{C}{L^{\frac{d-2}{2}}  R^2} \cdot \sum_{z\in \Sigma_2} \mathbb P_z (H_{\Sigma_1} < R^2) \cdot \mathbb P_z(H_{\Sigma_2}^+= \infty)\\
  & \lesssim \exp(-c\sqrt L)  \cdot R^{d-4} {\lesssim} \exp(-c\sqrt L) \cdot  \cpc{B(0,R)},
 \end{align}   
 where for the last inequality we used~{\eqref{lem.bcapballs}}.
   Combining~\eqref{eq3.lem.pre.2} and~\eqref{eq4.lem.pre.2} yields for $L$ large enough, 
$$
  \sum_{x\in \partial B(0,R)} \mathbb P(\cT^x_- \cap B(0,R) = \emptyset, \, \sigma^x<R^2) \le \frac{\cpc{B(0,R)} }{10}.
$$
Together with~\eqref{eq2.lem.pre.2}, this gives 
$$ \sum_{x\in \partial B(0,R)} \mathbb P(\cT^x_- \cap B(0,R) = \emptyset, \, R^2 \le \sigma^x \le L^dR^2) \ge \frac 45\cdot \cpc{B(0,R)},$$ 
and remembering also~\eqref{eq1.lem.pre.2} concludes the proof of the lemma. 
\end{proof}

\subsection{Proof of the lower bound of Theorem~\ref{theo.potentiel}}

\begin{proof}[\bf Proof of~\eqref{Lower.theo}]
Assume without loss of generality that $0\in K$, and $x=0$. It amounts to bound from below $Ge_K(0)=\sum_{x\in K} G(x)e_K(x)$, 
by some universal constant that does not depend on $K$. 
Fix $L\ge 2$ to be determined later and define $R_i = L^{2i}$, for $i\ge 0$. Then let 
$$B_i = B(0,R_i), \quad \mathcal S_i = B_i\smallsetminus B_{i-1} \quad \text{and}\quad K_i = K\cap B_i,$$ 
with also $B_{-1}= \emptyset$.  
Define $I$ as the maximal index $i$ such that $K\cap \mathcal S_i \neq \emptyset$. 
Note that if $I\le 1$, then we can write 
$$Ge_K(0) \ge \big(\inf_{x\in B(0,R_1)} G(x)\big) \cdot \cpc{K} \ge \big(\inf_{x\in B(0,R_1)} G(x)\big) \cdot \cpc{\{0\}},$$
which gives a universal lower bound independent of $K$. Thus we may assume now that $I\ge 2$.

Recall that we defined $G(r) = r^{4-d}$, for $r>0$. Using~\eqref{Green.asymp}, we get that for a positive constance $c_0$ (only depending on $L$) whose value may change from line to line
\begin{equation}\label{shell-1}
\begin{split}
\sum_{x\in K} G(x)e_K(x)& = \sum_{i=0}^I\sum_{x\in \mathcal S_i} G(x)e_K(x)\ge {c_0} \sum_{i=0}^I G(R_i)\cdot e_K(\mathcal S_i)\\
& \ge  c_0
 \sum_{i=0}^I \big(\sum_{j\ge i} G(R_j)\big)\cdot  e_K(\mathcal S_i)=  c_0 \sum_{i=0}^I G(R_i)\cdot e_K(B_i).
\end{split}
\end{equation}
For $i\in \{0,\dots,I\}$, define 
$$\varepsilon_i = G(R_i)\cdot \cpc{K_i},$$ 
and let 
$$I^*= \inf\big\{i \ge 0 : \sum_{k=i}^I \varepsilon_k \le \delta\big\},$$
where $\delta>0$ is another constant to be fixed later, and using the convention $\inf \emptyset =+\infty$. 
The proof of~\eqref{Lower.theo} will follow from the next result, where we use the convention $K_{-1} = \emptyset$. 
\begin{proposition}\label{prop-invisible}
\rm{There exists $c>0$, and a choice of $L$ and $\delta$, such that for any finite $K\subset \mathbb Z^d$, 
and any index $i$ satisfying $I^*+1\le i \le I$, 
\begin{equation*}
e_K(B_i) \ge c \cdot \cpc{K_{i-2}}.  
\end{equation*}
}
\end{proposition}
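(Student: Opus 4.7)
My plan is to apply the last-passage decomposition of Proposition~\ref{pro:lastpassagedec} to $e_K(B_i)$ with $K' = K\cap B_i$ and outer ball $B = B_{i+1}$, and to compare the resulting expression term by term with the analogous representation of $\cpc{K\cap B_i}$. Explicitly,
\begin{align*}
e_K(B_i) &= \sum_{w\in\partial B_{i+1}} \mathbb{P}\!\left(\mathcal{T}_-^w \cap (K\cup B_{i+1}) = \emptyset,\ \mathcal{T}_+^w \text{ first hits $K$ in } K\cap B_i \right),\\
\cpc{K\cap B_i} &= \sum_{w\in\partial B_{i+1}} \mathbb{P}\!\left(\mathcal{T}_-^w \cap B_{i+1} = \emptyset,\ \mathcal{T}_+^w \cap (K\cap B_i) \neq \emptyset \right),
\end{align*}
so the two expressions differ only by two additional constraints in the first: (i) the past $\mathcal{T}_-^w$ must also avoid the outer shells $K\setminus B_{i+1} = \bigcup_{j\ge i+1} K_j$, and (ii) the future $\mathcal{T}_+^w$ must \emph{first} enter $K$ inside $K\cap B_i$, i.e.\ avoid $K\setminus B_i$ before reaching $K\cap B_i$. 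Once the comparison $e_K(B_i)\ge c\cdot \cpc{K\cap B_i}$ is established, monotonicity (Remark~\ref{rem.inclusion}) gives $\cpc{K\cap B_i}\ge \cpc{K_{i-2}}$, yielding the claim.

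To control the two corrections I would use the conditional independence of past and future adjoint trees given the spine of $\mathcal{T}^w$, together with the hitting estimates of Theorem~\ref{theo-zhu}. For (i), each outer shell $K_j$ with $j\ge i+1$ satisfies $d(w,K_j)\gtrsim R_j$ for $L$ large, so $\mathbb{P}(\mathcal{T}_-^w \cap K_j\neq \emptyset)\lesssim \cpc{K_j}/R_j^{d-4} \asymp \varepsilon_j$; summing and using the hypothesis $i\ge I^*+1$, which gives $\sum_{j\ge i+1}\varepsilon_j \le \sum_{j\ge I^*}\varepsilon_j \le \delta$, yields a multiplicative loss $(1-C_L\delta)$ on the past side. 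For (ii), the "bad" event is contained in $\{\mathcal{T}_+^w \cap (K\setminus B_i)\neq \emptyset\}$, and an analogous hitting estimate for $\mathcal{T}_+^w$, using its decomposition into a critical tree rooted at $w$ and a past-like forest, yields the same type of bound, producing another multiplicative loss $(1-C_L\delta)$. Choosing $L$ large enough for the distance hypothesis of Theorem~\ref{theo-zhu} to apply, and then $\delta$ small enough so that $C_L\delta\le 1/4$, one concludes $e_K(B_i)\ge \tfrac{1}{2}\cpc{K\cap B_i}\ge \tfrac{1}{2}\cpc{K_{i-2}}$.

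The main obstacle is normalising the error bounds correctly: a naive union bound over $j\ge i+1$ of terms $\mathbb{P}(\mathcal{T}_-^w\cap B_{i+1}=\emptyset,\ \mathcal{T}_-^w\cap K_j\neq \emptyset)$ produces a total of order $C_L\delta\cdot\cpc{B_{i+1}}\asymp C_L\delta \cdot R_{i+1}^{d-4}$, which can be much larger than $\cpc{K\cap B_i}$ and hence useless as a relative correction. To extract an error proportional to $\cpc{K\cap B_i}$ one has to incorporate the future-hitting condition into the bound, which, via the conditional independence of past and future given the spine, reduces the task to jointly estimating $\mathbb{P}(\mathcal{T}_-^w\cap K_j\neq \emptyset\mid \mathrm{spine})$ and $\mathbb{P}(\mathcal{T}_+^w\cap(K\cap B_i)\neq \emptyset\mid \mathrm{spine})$. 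Implementing this cleanly will require running the spine-level analysis from the proof of~\eqref{Upper.theo} in Section~\ref{subsec.upper} in the reverse direction to produce a lower bound, and most likely invoking Lemmas~\ref{lem.pre.1} and~\ref{lem.pre.2} of this section, which provide the needed quantitative control on the critical tree's excursions and on random walk path avoidance.
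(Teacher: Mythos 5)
Your plan assembles several of the right ingredients (the last passage decomposition, the role of $I^*$ and $\delta$, conditional independence of the adjoint trees given the spine, Lemmas~\ref{lem.pre.1} and~\ref{lem.pre.2}), but it has a genuine gap, and it sits exactly where you defer to ``running the Section~\ref{subsec.upper} analysis in reverse''. After decomposing at $\partial B_{i+1}$, your correction (ii) is not of size $C_L\delta$: the event you must exclude is that $\cT_+^w$ reaches $K\setminus B_i$ before $K\cap B_i$, and $K\setminus B_i$ contains $K\cap \mathcal S_{i+1}$, which may lie at distance $O(1)$ from $w\in\partial B_{i+1}$; Theorem~\ref{theo-zhu} requires $d(x,K)\gtrsim \mathrm{diam}(K)$ and does not apply there, and the hitting probability can be of order one (a single point of $K$ adjacent to $w$ already does it). The same adjacency problem affects (i), since the nearest part of $K\setminus B_{i+1}$ lies in $\mathcal S_{i+2}$, right against $\partial B_{i+1}$. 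More structurally, the quantities $\varepsilon_j=G(R_j)\cpc{K_j}$ only control hitting from distance of order $R_j$, so the condition $\sum_{j\ge I^*}\varepsilon_j\le\delta$ says nothing about a tree started on $\partial B_{i+1}$ avoiding nearby pieces of $K$ while crossing the shell; and in any case you need a bound on the bad events \emph{relative} to the constrained good event $\{\cT_+^w\cap(K\cap B_i)\neq\emptyset,\ \cT_-^w\cap B_{i+1}=\emptyset\}$, not an absolute one — a point you notice yourself for (i), but the proposed fix is only gestured at, and carrying it out is the entire technical content of the proof. Note also that your intermediate target $e_K(B_i)\gtrsim \cpc{K\cap B_i}$ is strictly stronger than the proposition; the two-index gap $K_{i-2}$ in the statement is there precisely so that one does not have to prove it.

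The idea you are missing is to apply Lemma~\ref{lem-shrink} first: $e_K(B_i)\ge e_{K^i}(B_{i-2})$ with $K^i=K\setminus(\mathcal S_i\cup\mathcal S_{i-1})$. Removing these two shells creates a buffer $B_i\setminus B_{i-2}$ containing no point of $K^i$, so after applying Proposition~\ref{pro:lastpassagedec} at the \emph{inner} boundary $\partial B_{i-1}$ (not $\partial B_{i+1}$), the awkward constraint ``$\cT_+^w$ first hits $K^i$ in $K_{i-2}$'' is implied by the local event ``$\mathcal F_+^w[0,\sigma]$ hits $K_{i-2}$ before $\partial B_i$'', where $\sigma$ is the first time the spine reaches $\Sigma=\partial B(0,LR_{i-1})$. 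One then conditions on spine paths $\gamma:w\to\Sigma$ with $R_{i-1}^2\le|\gamma|\le L^dR_{i-1}^2$: each adjoint future tree off the spine hits $K_{i-2}$ before $\partial B_i$ with probability $\gtrsim \cpc{K_{i-2}}/(L^{d-2}R_{i-1}^{d-2})$ (Lemma~\ref{lem.pre.1}), the avoidance cost of $B_{i-1}\cup\partial B_i$ along the spine is at least $b_{B_{i-1}}(\gamma)e^{-cL^d}$, and summing $s(\gamma)b_{B_{i-1}}(\gamma)$ over $w$ and $\gamma$ gives $\gtrsim \cpc{B_{i-1}}\asymp R_{i-1}^{d-4}$ (Lemma~\ref{lem.pre.2}); combining these produces the factor $\cpc{K_{i-2}}$. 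Everything beyond $\Sigma$ — the only place where $i\ge I^*+1$ enters — is handled by $\inf_{u\in\Sigma}\mathbb P\big(\cT_-^u\cap(B_{i-1}\cup K^i)=\emptyset\big)\ge \tfrac12$ for $L$ large and $\delta$ small, which works because from $\Sigma$ the remaining obstructions $B_{i-1}$ and $K^i\setminus B_{i-2}\subset B_i^c$ are at distances where Theorem~\ref{theo-zhu} applies. In short, the spine mechanics you anticipate are indeed used, but the shell-removal step of Lemma~\ref{lem-shrink} and the inner-boundary decomposition — both absent from your proposal — are what make the ``first hit'' constraint and the nearby-obstruction problem disappear.
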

Assuming this proposition, one can conclude the proof of~\eqref{Lower.theo}. Indeed, fix $L$ and $\delta$, as in Proposition~\ref{prop-invisible}, 
and distinguish between a few cases.
If $ \varepsilon_I \ge \delta/(4L^{2(d-4)})$, then we have by~\eqref{shell-1},  
$$Ge_K(0) \ge  c_0  G(R_I) \cdot e_K(B_I)  =c_0\cdot \varepsilon_I \ge c_0\cdot \frac{\delta}{4L^{2(d-4)}}. $$ 
If $\varepsilon_I\le \delta/(4L^{2(d-4)})$, then we have 
$$\varepsilon_{I-1} = G(R_{I-1})\cdot \cpc{K_{I-1}} \le L^{2(d-4)} \cdot \varepsilon_I \le \delta/4. $$  
In particular $I^*\le I-1$. If in addition $ I^*\ge 1$, then by~\eqref{shell-1} and Proposition~\ref{prop-invisible}, we get
$$Ge_K(0) \geq c_0 \sum_{i=I^*+1}^I G(R_i)\cdot e_K(B_i)\ge \frac{c_0c}{L^{4(d-4)}} \cdot \sum_{i=I^*-1}^{I-2} \varepsilon_i\ge \frac{c_0c}{L^{4(d-4)}} \cdot (\delta - \frac{\delta}{4} - \frac{\delta}{4L^{2(d-4)}}) \ge \frac{c_0c\cdot \delta}{2L^{4(d-4)}} . $$ 
If $I^*=0$, then we have as well (recall that we assume $I\ge 2$),
$$Ge_K(0) \ge c_0G(R_2) \cdot e_K(B_2) \ge c_0c G(R_2) \cdot \cpc{K_0} \ge c_0c G(R_2)\cdot \cpc{\{0\}}, $$
using that $K_0$ contains the origin for the last inequality.  
In all cases we get a universal lower bound for $Ge_K(0)$, independent of $K$, and this concludes the proof of~\eqref{Lower.theo}. 
\end{proof}

It remains to prove the previous proposition. 

\begin{proof}[\bf Proof of Proposition~\ref{prop-invisible}] 
Assume that $I^*\le I-1$, as otherwise there is nothing to prove, and fix some $i\in \{I^*+1,\dots,I\}$. 
By Lemma~\ref{lem-shrink}, we have that 
\begin{equation}\label{shell-2}
e_K(B_i)\ge e_{K^i} (B_{i-2}),
\quad\text{where}\quad K^i=K\setminus (\mathcal S_i\cup \mathcal S_{i-1}).
\end{equation}
Applying Proposition~\ref{pro:lastpassagedec}  yields 
\begin{equation}\label{shell-3}
e_{K^i}(B_{i-2})  = \sum_{w\in \partial B_{i-1}} \mathbb P\big(\cT^w_+ \text{ first hits }K^i \text{ in } K_{i-2}, \, \cT^w_- \cap (B_{i-1}\cup K^i) =\emptyset\big). 
\end{equation}
Let $\Sigma = \partial B(0,L R_{i-1})$, and define $\sigma$ as the first time the spine hits $\Sigma$ (in its natural parametrisation). 
One has for any $w\in \partial B_{i-1}$, 
\begin{align}\label{lower.prop.proof}
 \nonumber  \mathbb P\big(\cT^w_+  & \text{ first hits }K^i \text{ in } K_{i-2}, \, \cT^w_- \cap (B_{i-1}\cup K^i) =\emptyset\big) \\
& \ge \nonumber
\mathbb P(\cF_+^w[0,\sigma] \text{ first hits }K^i \text{ in } K_{i-2}, \, \cT^w_- \cap (B_{i-1}\cup K^i)=\emptyset\big)  \\
\nonumber & \ge \mathbb P(\cF_+^w[0,\sigma] \text{ hits } K_{i-2} \text{ before } \partial B_i, \, \cF^w_-[0,\sigma]  \cap (B_{i-1}\cup \partial B_i)=\emptyset\big) \\
&  \qquad \times \inf_{u \in \Sigma} \mathbb P\big(\cT^u_- \cap (B_{i-1}\cup K^i)=\emptyset \big).
\end{align}
We deal first with the last probability. 
By Proposition~\ref{lem.bcapballs} and~\eqref{hitting-infinite}, one has 
$$\sup_{u\in \Sigma} \mathbb P(\cT^u_- \cap B_{i-1} \neq \emptyset) \lesssim \frac{1}{L^{d-4}} ,$$
and by definition of $I^*$, one has by a union bound
$$\sup_{u\in \Sigma} \mathbb P(\cT^u_- \cap K \cap B_i^c\neq  \emptyset )  \lesssim \sum_{j\ge i +1}\varepsilon_j \lesssim \delta, $$ 
so that by choosing $\delta$ small enough, and $L$ large enough, one can ensure that 
\begin{equation}\label{inf.sigma}
\inf_{u\in \Sigma}  \mathbb P\big(\cT^u_- \cap (B_{i-1}\cup K^i)=\emptyset \big) \ge \frac 12. 
\end{equation}
Now we bound from below the other probability in~\eqref{lower.prop.proof}. Recall that $X^w$ denotes the random walk on the spine. 
One has 
\begin{align*}
&\mathbb P(\cF_+^w[0,\sigma] \text{ hits } K_{i-2} \text{ before } \partial B_i, \, \cF^w_-[0,\sigma]  \cap (B_{i-1}\cup \partial B_i)=\emptyset\big)  \\
& \ge \sum_{\substack{\gamma : w\to \Sigma \\ \gamma \subseteq B_{i-1}^c\\  R_{i-1}^2 \le |\gamma|\le L^dR_{i-1}^2}} s(\gamma) \cdot  
\mathbb P(\cF_+^w[0,\sigma] \text{ hits } K_{i-2} \text{ before } \partial B_i, \, \cF^w_-[0,\sigma]  \cap (B_{i-1}\cup \partial B_i)=\emptyset \mid X^w[0,\sigma]=\gamma\big)
\end{align*}
For $\ell \ge 0$, and on the event $\{X^w[0,\sigma] = \gamma\}$, we denote by $\til{\cT}^{\gamma(\ell)}_-$ the adjoint tree hanging off $\gamma(\ell)$ in the past and by $\til{\cT}^{\gamma(\ell)}_+$ the adjoint tree without its root hanging off $\gamma(\ell)$ in the future. 
Then using the independence of these trees for different $\ell$, we get that for any $\gamma : w\to \Sigma$ with $\gamma \subseteq B_{i-1}^c$, one has 
\begin{align}\label{eq.forest.1} 
\nonumber &\mathbb P(\cF_+^w[0,\sigma]\text{ hits } K_{i-2} \text{ before } \partial B_i, \, \cF^w_-[0,\sigma]  \cap (B_{i-1}\cup \partial B_i)=\emptyset \mid X^w[0,\sigma]=\gamma\big)\\
\nonumber & \ge \Big(\prod_{\ell=0}^{|\gamma|} \mathbb P(\til{\cT}^{\gamma(\ell)}_+ \cap (B_{i-2}\cup \partial B_i)=\emptyset,\, \til{\cT}^{\gamma(\ell)}_- \cap (B_{i-1}\cup \partial B_i)=\emptyset)  \Big)
\\
& \qquad \times \Big( \sum_{\ell=0}^{|\gamma|} \mathbb P (\til{\cT}^{\gamma(\ell)}_+ \text{ hits } K_{i-2} \text{ before } \partial B_i,\, \til{\cT}^{\gamma(\ell)}_-\cap (B_{i-1}\cup \partial B_i)=\emptyset)  \Big). 
\end{align}
Now by~\eqref{hitting-critical} and since $\gamma\subseteq B_{i-1}^c\cap B(0,LR_{i-1})$, one has for any $\ell \ge 0$, and some constant $c>0$, 
\begin{align*}
\mathbb P(\til{\cT}^{\gamma(\ell)}_+ \cap (B_{i-2}\cup \partial B_i)=\emptyset, \, \til{\cT}^{\gamma(\ell)}_- \cap (B_{i-1}\cup \partial B_i)=\emptyset) 
& \ge \mathbb P(\til{\cT}^{\gamma(\ell)}_- \cap B_{i-1}=\emptyset) \cdot (1- \frac{c}{R_{i-1}^2}) \\
&=b_{B_{i-1}}(\gamma(\ell)) \cdot (1- \frac{c}{R_{i-1}^2}),  
\end{align*}
and thus the product on the right-hand side of~\eqref{eq.forest.1} is bounded from below by $b_{B_{i-1}}(\gamma) \cdot \exp(-c'\cdot L^d)$, with $c'$ another positive constant and for any $\gamma$ satisfying 
$|\gamma|\le L^d R_{i-1}^2$. Concerning the other terms appearing in the sum in~\eqref{eq.forest.1}, by considering the event that the $\ell$-th vertex of the spine has no children in the past, and at least one in the future, we obtain that for some constant $c>0$ whose value may change from line to line, 
\begin{align*}
 \mathbb P (\til{\cT}^{\gamma(\ell)}_+ \text{ hits } K_{i-2} \text{ before } \partial B_i,\, \til{\cT}^{\gamma(\ell)}_-\cap (B_{i-1}\cup \partial B_i)=\emptyset)& \ge c \cdot \inf_{x\sim \gamma(\ell)} \mathbb P \big(\cT^x_c  \text{ hits } K_{i-2} \text{ before } \partial B_i\big)\\
& \ge \frac{c}{L^{d-2}} \cdot \frac{\cpc{K_{i-2}}}{R_{i-1}^{d-2}},  
\end{align*}
using also Lemma~\ref{lem.pre.1} for the last inequality. Altogether this gives, using in addition Lemma~\ref{lem.pre.2}, 
\begin{align*}
& \sum_{w\in \partial B_{i-1}} \mathbb P(\cF_+^w[0,\sigma] \text{ hits } K_{i-2} \text{ before } \partial B_i, \, \cF^w_-[0,\sigma]  \cap (B_{i-1}\cup \partial B_i)=\emptyset\big)  \\
& \ge c\cdot \frac{\cpc{K_{i-2}}}{R_{i-1}^{d-4}}\cdot \exp(-C\cdot L^d) \cdot  \sum_{w\in \partial B_{i-1}} \sum_{\substack{\gamma : w\to \Sigma \\  R_{i-1}^2 \le |\gamma|\le L^dR_{i-1}^2}} s(\gamma) \cdot b_{B_{i-1}}(\gamma) \\
& \ge c \cdot \cpc{K_{i-2}} \cdot \exp(-C\cdot L^d).
\end{align*}
Plugging this together with~\eqref{inf.sigma} into~\eqref{lower.prop.proof}, and then in~\eqref{shell-3} and~\eqref{shell-2} concludes the proof of the proposition. 
\end{proof}


\section{Proofs of miscellaneous  corollaries} \label{sec.cor}

\begin{proof}[\bf Proof of Corollary~\ref{cor-subset}]
We follow broadly the same proof as in \cite{AS23a}, but use    
some simplified arguments. We therefore
omit similar details and focus on the differences. We recall that $B(\cU,r)=\cup_{x\in \cU} B(x,r)$.
The idea is to show that with positive probability there is a set $\cU$ such that 
\[
\cpc{B(\cU,r)}\asymp r^{d-4}\cdot |\cU|\asymp \cpc{B(\cC,r)}.
\]
In \cite{AS23a}, the random subset $\cU$ is constructed by keeping the points $x$ in~$\cC$ such that an independent random walk started from $x$ never returns to $B(\cC,r)$ after escaping the ball $B(x,2r)$. 
In our setting it is in fact slightly simpler to choose a family of independent
Bernoulli variables $\{Y_x,\ x\in \cC\}$ with respective parameter
\[
\bE[Y_x]:=\frac{c}{r^{d-4}} \sum_{y\in \partial B(x,r)} \bP\big(\cT^y_-\cap B(\cC,r)=\emptyset\big),
\]
with $c>0$ chosen so that $\sup_x \bE[Y_x]\le 1$. This is possible, since
\begin{align*}
& \bP\big(\cT^y_-\cap B(\cC,r)=\emptyset\big)\le  \bP\big(\cT^y_-\cap B(x,r)=\emptyset\big) \quad \text{ and } \\
&\sum_{y\in \partial B(x,r)}  \bP\big(\cT^y_-\cap B(x,r)=\emptyset\big)=\cpc{B(x,r)}\asymp r^{d-4}.
\end{align*}
Now, define $\cU=\{x\in \cC:\ Y_x=1\}$. Then, 
\begin{equation}\label{proof-subset1}
\begin{split}
\bE[|B(\cU,r)|]=&|B(0,r)|\cdot \sum_{x\in \cC} 
\frac{c}{r^{d-4}} \sum_{y\in \partial B(x,r)} \bP\big(\cT^y_-\cap B(\cC,r)=\emptyset\big)\\
=& |B(0,r)|\cdot \frac{c}{r^{d-4}} \cpc{B(\cC,r)}\asymp r^4\cdot \cpc{B(\cC,r)}.
\end{split}
\end{equation}
As a sum of Bernoulli, we also obtain Var$(|B(\cU,r)|)\le |B(0,r)|\cdot \bE[|B(\cU,r)|]$, so that
\begin{equation}\label{proof-subset2}
\bP\big( |B(\cU,r)| \asymp r^4\cdot \cpc{B(\cC,r)}\big) \ge \frac{3}{4}.
\end{equation}
Now, we need to deal with the branching capacity of $B(\cU,r)$.  Note that from the lower bound
of the variational characterisation, there is a constant $C$ (independent of $\cC$),
\begin{equation}\label{proof-subset3}
\sum_{x,x'\in \cU}\sum_{y\in \partial B(x,r)}\sum_{y'\in \partial B(x',r)} G(y-y')\ge 
\frac{C\big(|\partial B(0,r)|\cdot |\cU|\big)^2 }{\cpc{B(\cU,r)}}.
\end{equation}
Following the arguments of \cite{AS23a}, we only need an upper bound of
the left hand side of \reff{proof-subset3} of order $r^{d+2}\cdot |\cU|$. 
To obtain an upper bound for the left hand side of \reff{proof-subset3}, we consider expectation,
and treat separately the cases $x=x'$ and $x\not= x'$. 
Assume $x=x'$, then an easy computation using $G(z)\le C \|z\|^{4-d}$ yields (for some
constant $c$ whose value may change from line to line).
\begin{equation}\label{proof-subset4}
\E{ \sum_{x\in \cU} \sum_{y\in \partial B(x,r)}\sum_{y'\in \partial B(x,r)} G(y-y')}
\le c\cdot r^3\cdot r^{d-1}\cdot \E{|\cU|}.
\end{equation}
In the case $x\not= x'$,
\begin{equation}\label{proof-subset5}
\begin{split}
&\E{ \sum_{x\in \cU}\sum_{x'\not =x\in \cU}
\sum_{y\in \partial B(x,r)}\sum_{y'\in \partial B(x',r)}\!\!\! G(y-y')}\le 
c |\partial B(0,r)|^2\sum _{x\not =x'\in \cC}\bP(Y_x=1) G(x-x') \bP(Y_{x'}=1)\\
&\le c \frac{(r^{d-2})^2}{r^{d-4}}  \cdot \bE[|\cU|] \cdot \sup_{x\in \cC}  \sum_{x'\not =x\in \cC} G(x-x')
\sum_{y'\in \partial B(x',r)} \bP\big(\cT^y_-\cap B(\cC,r)=\emptyset\big).
\end{split}
\end{equation}
But since $x\not= x'$, we have that $G(x-x')\asymp G(x-y')$ for any $y'\in \partial B_r(x')$.
Thus, 
\begin{equation}\label{proof-subset6}
\begin{split}
&\E{ \sum_{x\not =x'\in \cU}
\sum_{y\in \partial B(x,r)}\sum_{y'\in \partial B(x',r)} G(y-y')} \\ &\le
c\frac{r^{2d-2}}{r^{d-4}}\cdot \E{|\cU|} \cdot  \sup_{x\in \cC}  \sum_{x'\not =x\in \cC} 
\sum_{y'\in \partial B(x',r)} G(x-y')\bP\big(\cT^y_-\cap B(\cC,r)=\emptyset\big)\\
&\lesssim  r^{d+2}\cdot \E{|\cU|}\cdot   \sum_{z\in \partial B(\cC,r)} G(z-x) e_{B(\cC,r)}(z)\lesssim r^{d+2} \cdot \bE[|\cU|].
\end{split}
\end{equation}
This now proves the desired upper bound on the left hand side of~\eqref{proof-subset3} and finishes the proof of the corollary.
\end{proof}

\begin{proof}[\bf Proof of Corollary~\ref{cor.locset}]
Let $\Lambda$ be a finite and nonempty subset of $\mathbb Z^d$, and consider the function $\varphi(x) =\tfrac{ \1(x\in \Lambda)}{\sup_{y\in \Z^d}G(y,\Lambda)}$. 
It is then immediate that~$\|G\varphi\|_\infty\leq 1$, and thus the corollary follows from Chebyshev's exponential inequality together with Theorem~\ref{theo.expmoment}. 
\end{proof}

\begin{proof}[\bf Proof of Corollary~\ref{thm:variationalchar}]
Define the function $\overline G(x,y)$, by 
$$\overline G(x,y) = \sum_{z\in \mathbb Z^d} g(x,z)g(z,y),$$
which is symmetric and positive definite. Note also that by~\eqref{convol.G} it is of the same order as $G$. Then define 
the scalar product on the set of functions supported on $K$, by  
\begin{equation*}
\langle f,g\rangle=\sum_{x,y\in K} \overline G(x,y) f(x)g(y).
\end{equation*}
As already seen, the upper bound 
$$\inf \big\{\langle \nu, \nu \rangle : \nu \text{ probability measure on }K\big\} \lesssim \frac 1{\cpc{K}}, $$
follows from~\eqref{Upper.theo} by choosing for $\nu$ the measure $\widehat e_K=e_K / \cpc{K}$. 
For the lower bound, note that by~\eqref{Lower.theo} one has for any measure $\nu$ supported on $K$, 
$$\langle \nu,\widehat e_K\rangle \gtrsim \frac{1}{\cpc{K}}.$$ 
On the other hand by Cauchy-Schwarz inequality one also has 
$$\langle \nu,\widehat e_K\rangle^2 \leq \langle \nu,\nu \rangle\cdot \langle \widehat e_K,\widehat e_K\rangle \lesssim \frac{\langle \nu,\nu \rangle}{\cpc{K}}, $$
using again~\eqref{Upper.theo} for the last inequality. Combining the last two displays gives as wanted
$$\inf \big\{\langle \nu, \nu \rangle : \nu \text{ probability measure on }K\big\} \gtrsim \frac 1{\cpc{K}}. $$
\end{proof}

\begin{proof}[\bf Proof of Corollary~\ref{cor:variation}]
We only prove~\eqref{caract.2}, the other characterisation~\eqref{caract.3} is entirely similar and left to the reader. 
The lower bound is obtained by taking $\varphi = \frac{e_K}{\|Ge_K\|_\infty} $. For the upper bound, note that for any function $\varphi$ which is nonnegative on $K$ and 
 satisfies  
$\max_{x\in K} G\varphi (x) \le 1$, 
one has on one hand by~\eqref{Lower.theo}  
$$\langle \varphi, e_K\rangle \gtrsim \sum_{x\in K} \varphi(x),$$
and on the other hand using Cauchy-Schwarz's inequality, 
$$\langle \varphi, e_K\rangle^2 \le \langle \varphi, \varphi\rangle\cdot \langle e_K, e_K\rangle \lesssim \big(\sum_{x\in K} \varphi(x)\big) \cdot \cpc{K},$$ 
which gives the desired upper bound after simplifying. 
\end{proof}

\begin{proof}[\bf Proof of Corollary~\ref{cor.lowerbcap}]
On one hand \eqref{Upper.theo} shows that for any finite set $K$, 
\[
\sum_{x\in K}\sum_{y\in K} G(x,y) e_K(y)\gtrsim |K|,
\]
and on the other hand, by summing first over $x$ and using~\eqref{Green.asymp} we get  
\[
\sum_{x,y\in K} G(x,y) e_K(y)\lesssim  |K|^{4/d}\cdot \cpc{K}.
\]
The corollary follows. 
\end{proof}

\end{document}